\numberwithin{figure}{section}
\numberwithin{equation}{section}
\newcommand{\capa}{\operatorname{cap}}
\theoremstyle{definition}
\newtheorem{defi}{Definition}[section]
\newtheorem{statm}[defi]{Statement}
\newtheorem{lemma}[defi]{Lemma}
\newtheorem{rmrk}[defi]{Remark}
\theoremstyle{plain}
\newtheorem{theo}[defi]{Theorem}
\newtheorem{cor}[defi]{Corollary}
\newtheorem{algo}[defi]{Algorithm}
\gdef\@makecaption#1#2{\vskip\abovecaptionskip\sbox\@tempboxa{#1. #2}\ifdim\wd\@tempboxa>\hsize#1. #2\par\else\global\@minipagefalse\hb@xt@\hsize{\hfil\box\@tempboxa\hfil}\fi\vskip\belowcaptionskip}\makeatother
\begin{document}
\title{Another proof of Cruse’s theorem and a new necessary condition for completion of partial Latin squares (Part 3.)\footnote{This is the third part of our consecutive papers on Latin and partial Latin squares and on bipartite graphs. Each work holds the terminology and notation of the previous ones~\cites{[3],[4]}.}
}
\author{Béla Jónás}
\date {Aug 17, 2022 \hspace {2pt} Budapest, Hungary}

\maketitle

\begin{abstract}
A \emph{partial Latin square} of order $n$ can be represented by a $3$-dimensional chess-board of size $n\times n\times n$ with at most $n^2$ non-attacking rooks. 
Based on this representation, we apply a uniform method to prove the M. Hall's, Ryser's and Cruse's theorems for completion of partial Latin squares.
With the help of this proof, we extend the scope of Cruse's theorem to \emph{compact} bricks, which appear to be independent of their environment.

Without losing any completion you can replace a dot by a rook if the dot must become rook, or you can eliminate the dots that are known not to become rooks. Therefore, we introduce \emph{primary} and \emph{secondary extension} procedures that are repeated as many times as possible. If the procedures do not decide whether a PLSC can be completed or not, a new necessary condition for completion can be formulated for the dot structure of the resulting PLSC, the \emph{BUG condition}.
\end{abstract}

\textbf{MSC-Class:} 05B15

\textbf{Keywords:} Latin square, partial Latin square 

\section*{Abbreviations}
\begin{tabular}{@{}ll}
LS&Latin square\\
LSC&Latin super cube\\
$d$-LSC&Latin super cube of dimension $d$\\
PLS&partial Latin square\\
PLSC&partial Latin super cube\\
RBC&remote brick couple\\
RAC&remote axis couple\\
BUG&Bivalue Universal Grave
\end{tabular}

\section{Introduction}
Generally, we define a set of candidates $P_{ij}$ for each empty cell $(i,j)$ of a PLS as follows: $k \in P_{ij}$ exactly if the cell $(i,j,k)$ of the proper PLSC has a dot. A PLSC can be projected to a PLS in the usual way, the only difference is that each empty cell of the PLS is connected to a set of candidates.
The Goldwasser's square~\cite{[4]} will be referred to as \emph{GW6} hereafter. When illustrating a smaller PLS, such as \emph{GW6}, the candidates are written in the empty cells, but with smaller numbers, as is usual when solving Sodoku puzzles. The \emph{GW6} prepared in this way is depicted in the Figure~\ref{fig1_1}.
So, the 35 in the cell (5,4) means that $P_{54} = \{35\}$.

\begin{figure}[!htb]
\centering\includegraphics [scale=0.42]
{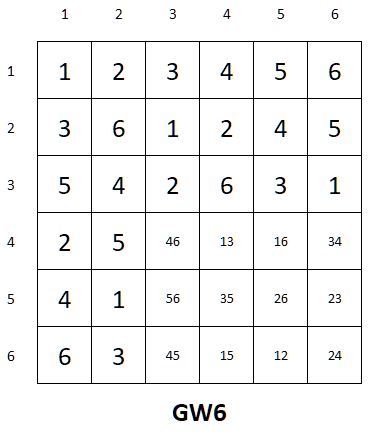}
\caption{}\label{fig1_1}
\end{figure}

\section{Basic Results of Completion}
\begin{defi} 
Let $P$ be a PLSC and $T$ a brick of size $r\times s\times t$ containing all the rooks of $P$, where $T$ is permuted to the origin, $t < n$ and $t$ is on the symbol axis. The cover sheet of $T$ is the matrix of size $n\times n$, whose cell with coordinate $(i,j)$ is one if there are no rooks in the cells of $T$ with coordinate $(i,j,c)$, where $c \in \{1,2,\ldots ,t\}$, and zero otherwise. So, the matrix is partly filled, as illustrated in the Figure~\ref{fig5_1}
\end{defi}

\begin{figure}[!htb]
\centering
\begin{tabular}{c}
\hbox to .8\textwidth {\includegraphics[scale=.05]{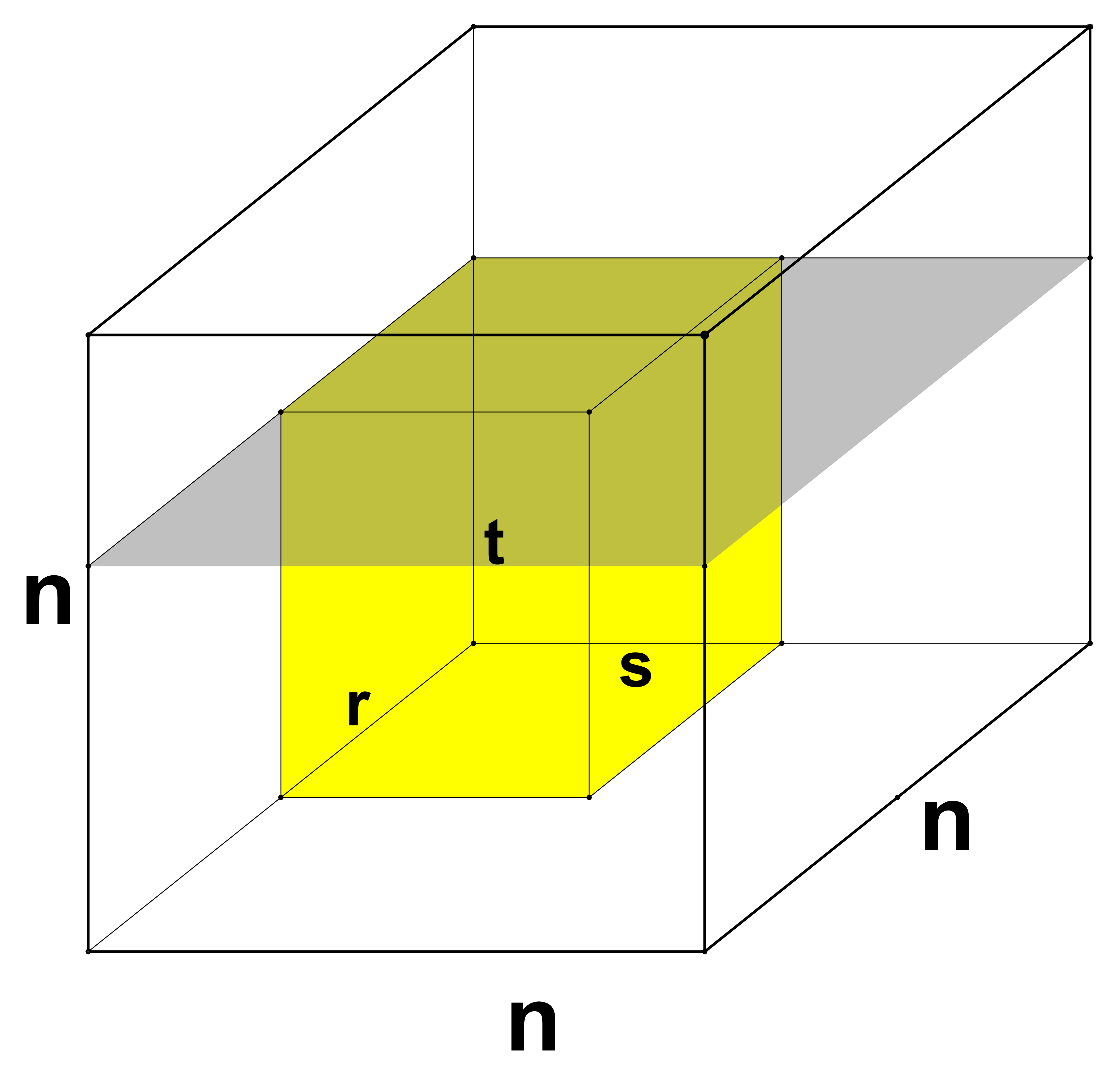}\hfill\includegraphics[scale=.31]{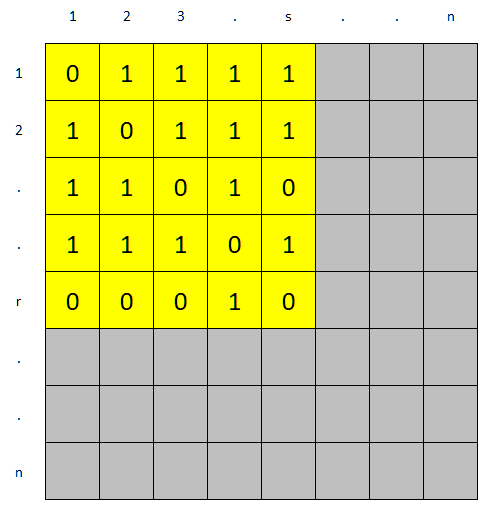}}
\end{tabular}
\caption{}\label{fig5_1}
\end{figure}

The uniform method of proving M. Hall's, Ryser's and Cruse's theorems consists in constructing a cover sheet in all three cases and extending it to a $(0,1)$-matrix of size $n\times n$ and decomposing into a sum of permutation matrices.

Let $P$ be a PLSC of order $n$ and suppose that $P$ consists of some completely filled column layers, row layers or symbol layers permuted next to each other. The Figure~\ref{fig1_3} depicts these yellow  $n^2$-bricks that  contain $sn$, $rn$, and $tn$ rooks, respectively.

\begin{figure}[htb]
\centering
\hbox to \textwidth {\includegraphics[scale=.26]{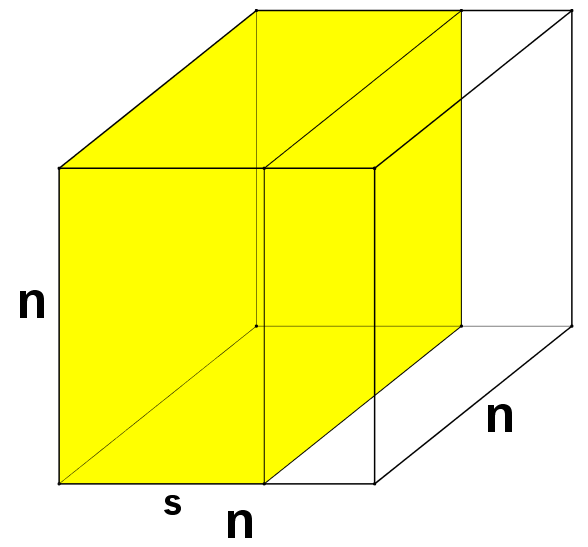}\hfill\includegraphics[scale=.26]{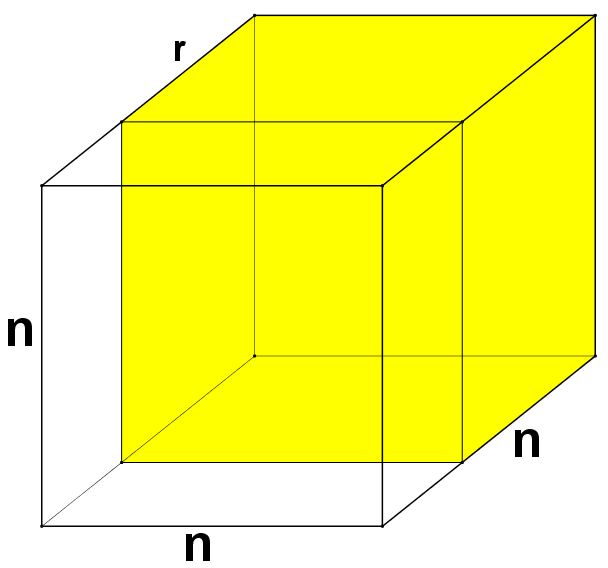}
\hfill\includegraphics[scale=.26]{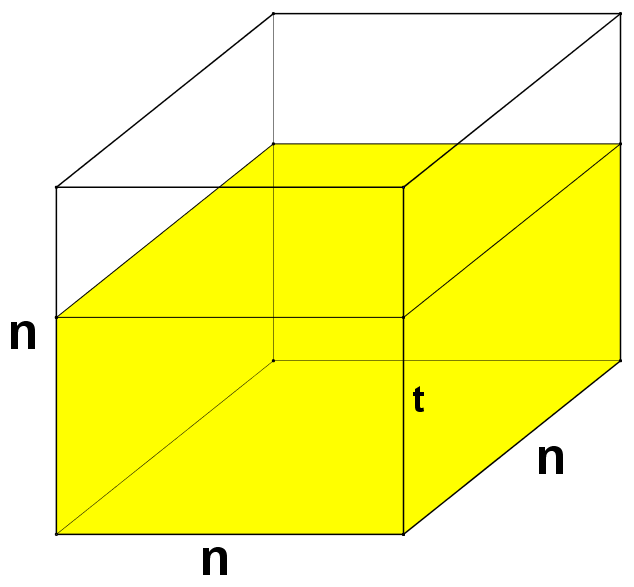}}
\caption{}\label{fig1_3}
\end{figure}

Let $P$ be a PLSC where the first $r$ row layers are completed. The projected version of this PLSC is a Latin rectangle of size $r\times n$. An example of a Latin rectangle of size $r\times n$ and the two conjugates of this PLS are shown in the Figure~\ref{fig1_2}. In our case, they are conjugates of each other, so $r = s = t = 5$ for the corresponding PLSCs in the Figure~\ref{fig1_3}.

\begin{figure}[htb]
\centering
\includegraphics[scale=.3]{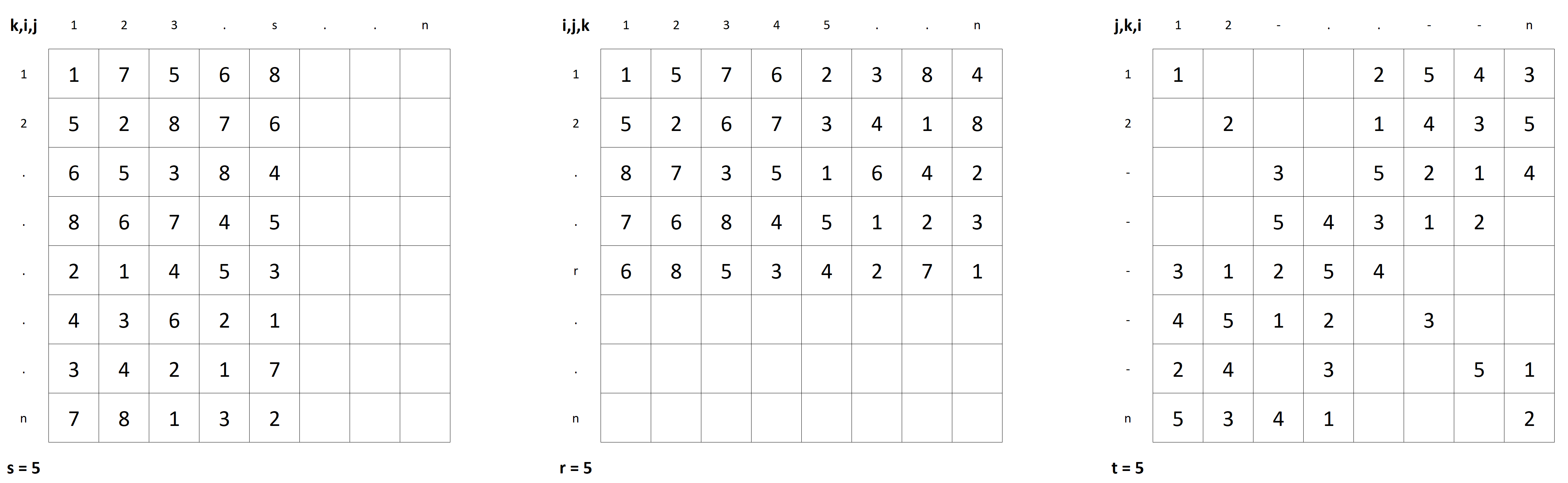}
\caption{}\label{fig1_2}
\end{figure}

In case of $n^2$-bricks the matrix is completely filled and the example in the right-hand side of Figure~\ref{fig1_4} is the cover sheet of the PLS shown on the right-hand side of Figure~\ref{fig1_2}.

\begin{figure}[!htb]
\centering
\begin{tabular}{c}
\hbox to .9\textwidth {\includegraphics[scale=.055]{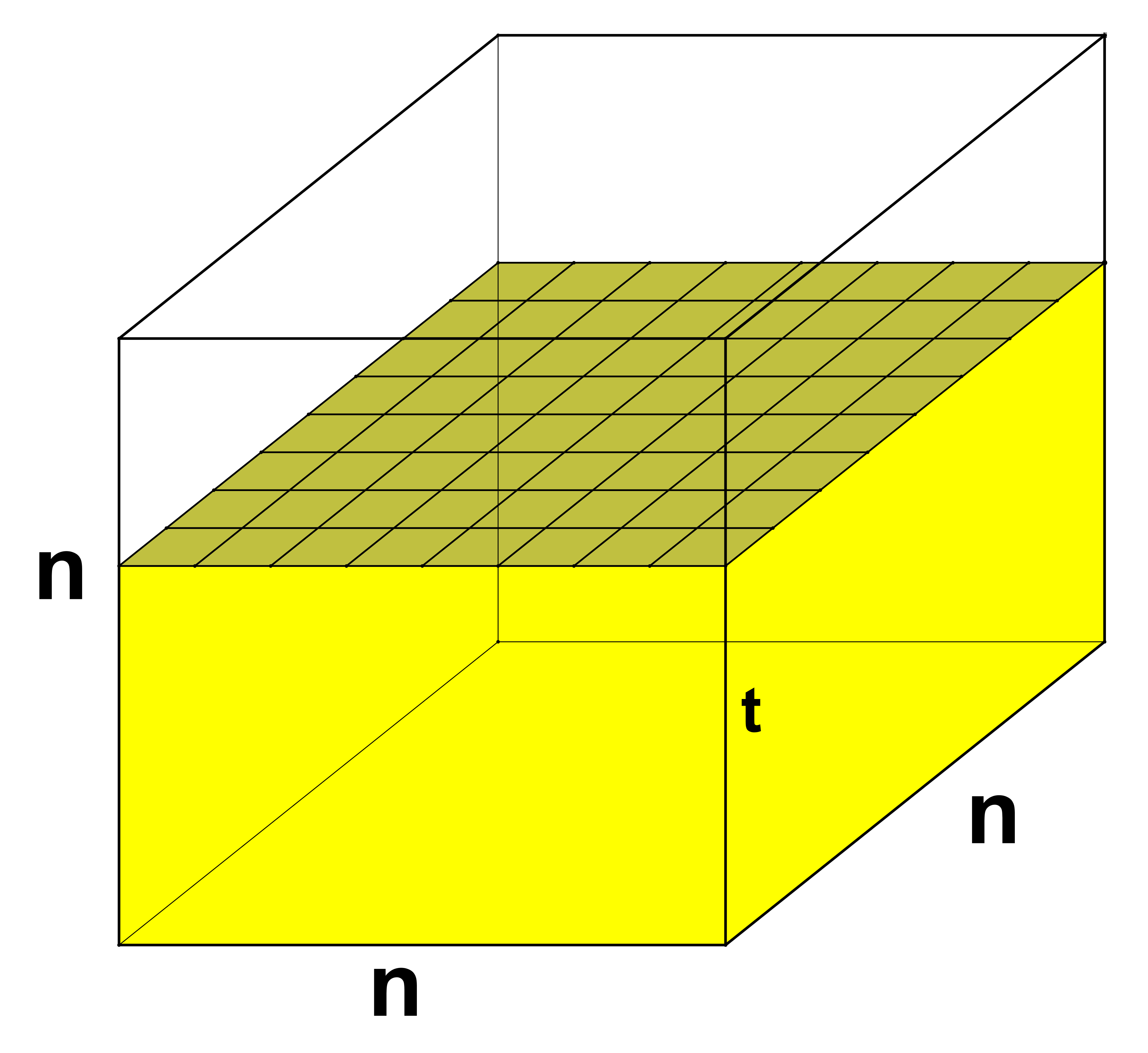}\hfill\includegraphics[scale=.4]{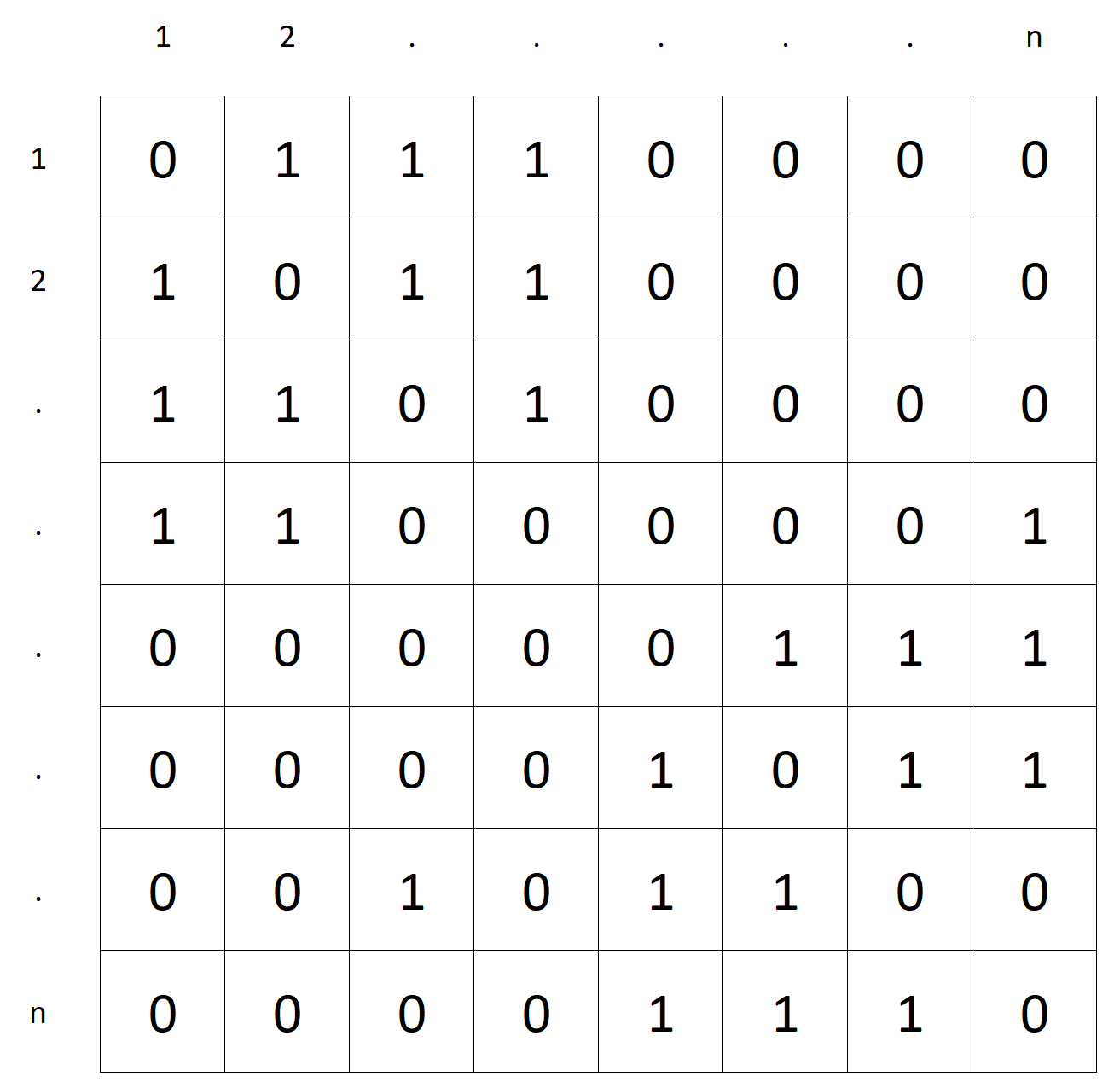}}
\end{tabular}
\caption{}\label{fig1_4}
\end{figure}

\begin{theo}[M.~Hall~\cite{[2]}]
Every $r\times n$ Latin rectangle, $1 \leq r < n$, can be completed to a Latin square of order $n$.
\end{theo}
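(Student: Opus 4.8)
The plan is to prove M. Hall's theorem by induction on $n-r$, the number of missing row layers, using the cover-sheet machinery and a classical Hall's marriage / König-type argument. The base case $n-r=0$ is vacuous (a Latin square is already complete), so it suffices to show that any $r\times n$ Latin rectangle with $r<n$ can be extended by one further row to an $(r+1)\times n$ Latin rectangle; iterating then yields a full Latin square of order $n$.

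First I would form the cover sheet of the $n^2$-brick consisting of the $r$ completed row layers. As noted in the text, this is the matrix of size $n\times n$ whose $(i,j)$ entry is $1$ when column $j$ does not yet contain symbol $i$ among the first $r$ rows, and $0$ otherwise. Equivalently, row $i$ of the cover sheet is the indicator of the set $C_i$ of columns in which symbol $i$ is still available, and column $j$ is the indicator of the set $R_j$ of symbols still available in column $j$. Because each of the $r$ rows is a permutation of $\{1,\dots,n\}$, every symbol $i$ occurs exactly $r$ times in the rectangle, hence $|C_i| = n-r$ for each $i$; dually every column contains exactly $r$ distinct symbols, so $|R_j| = n-r$ for each $j$. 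Thus the cover sheet is a $(0,1)$-matrix with all row sums and all column sums equal to $n-r$.

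The key step is then to extract a single permutation matrix $\le$ the cover sheet: a system of distinct representatives choosing, for each symbol $i$, a column $\sigma(i)\in C_i$ with $\sigma$ a bijection. Since the cover sheet has constant line sums $n-r\ge 1$, the Birkhoff–von Neumann / König edge-colouring theorem (or directly Hall's condition: for any set $S$ of symbols, the columns meeting $\bigcup_{i\in S}C_i$ carry at least $|S|(n-r)$ ones, so their number is $\ge |S|$) guarantees such a permutation. Placing symbol $i$ in column $\sigma(i)$ produces a legal new row: it is a permutation of the symbols, it avoids every symbol already in its column by the definition of $C_i$, and it repeats no symbol within itself since $\sigma$ is injective. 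This gives an $(r+1)\times n$ Latin rectangle, completing the induction.

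The main obstacle is really just the extraction of the permutation matrix — i.e. verifying Hall's condition for the cover sheet — and this is where I would be careful to use the constant-line-sum property rather than any ad hoc counting. Everything else (the bijective bookkeeping of available symbols, the legality of the new row, the induction scaffolding) is routine. In the framework of this paper the same fact will later be phrased as decomposing a doubly balanced $(0,1)$-matrix into permutation matrices, so I would present the one-row extension as the atomic case of that decomposition and remark that iterating it decomposes the whole cover sheet into $n-r$ permutation matrices, which is exactly the completed Latin square.
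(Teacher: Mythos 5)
Your proposal is correct and follows essentially the same route as the paper: both form the cover sheet of the filled $r$ layers, observe that every row and column sum equals $n-r$, and invoke K\H{o}nig's theorem to decompose it into $n-r$ permutation matrices that fill the missing layers (the paper does this in one step on a conjugate of the rectangle, while you phrase it as a one-row-at-a-time induction, but as you note yourself the iteration is exactly that decomposition). Your explicit verification of Hall's condition from the constant line sums is a correct filling-in of the step the paper delegates to K\H{o}nig.
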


\begin{proof}
Let $P$ be the PLSC form of the conjugated PLS denoted by $(j,k,i)$. Then each of the layers $1,2,\ldots ,t$ contains $n$  rooks. Let the cover sheet of $P$ be the matrix $M$. Each row and each column of $M$ contains exactly $n-t$  ones. Then, based on the Kőnig's theorem~\cite{[5]}, the matrix $M$ decomposes into the sum of $(n-t)$ permutation matrices, that is
\[
M = L_{t+1} + L_{t+2} + \ldots + L_n
\]
where $L_k$ is a permutation matrix for all $k \in \{t+1,t+2,\ldots ,n\}$. 
Put a rook in the cell of the PLSC $P$ with coordinate $(i,j,k)$ if the element of the matrix $L_k$ with coordinate $(i,j)$ is one, where $k \in \{t + 1, t + 2,\ldots ,n\}$. Then each layer of $P$ contains $n$  rooks, and all of the rooks are non-attacking, so we get a completion of $(j,k,i)$ conjugate of $P$.
The $(i,j,k)$-conjugate of this completion is a completion of $P$.
\end{proof}

\begin {rmrk}
If a PLSC $P$ has a completion, then each element of the main class of $P$ also has a completion.
\end{rmrk}

To prove his theorem Ryser first proved a combinatorial theorem.

\begin{theo} [Ryser's combinatorial theorem] \label{theo1.1}
Let $A$ be a matrix of $r$ rows and $n$ columns, composed entirely of zeros and ones, where $1 \leq r < n$. Let there be exactly $k$ ones in each row, and let $N(i)$ denote the number of ones in the $i$-th column of $A$.
If, for each $i=1,2,\ldots n$ holds, that
\[
k-(n-r) \leq N(i) \leq k
\]
then $n - r$ rows of zeros and ones may be adjoined to $A$ to obtain a square matrix with exactly $k$  ones in each row and in each column.
\end{theo}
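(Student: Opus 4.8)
The plan is to recast the statement as a prescribed row‑ and column‑sum problem for $(0,1)$‑matrices and then to dispatch that problem by a short induction that adjoins the new rows one at a time.

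First I would observe that adjoining $n-r$ rows to $A$ so that the resulting $n\times n$ matrix has exactly $k$ ones in every row and every column is the same as exhibiting an $(n-r)\times n$ matrix $B$ of zeros and ones in which every row has exactly $k$ ones and the $i$‑th column has exactly $c_i:=k-N(i)$ ones, and then stacking $B$ below $A$. The hypothesis $k-(n-r)\le N(i)\le k$ is precisely $0\le c_i\le n-r$, and counting the ones of $A$ gives $\sum_{i=1}^{n} c_i = nk-\sum_{i=1}^{n} N(i)=nk-rk=(n-r)k$. So it suffices to prove the following self‑contained claim: if $m\ge 1$ and $c_1,\dots,c_n$ are integers with $0\le c_i\le m$ and $\sum_{i=1}^{n} c_i=mk$, then there is an $m\times n$ matrix of zeros and ones whose $i$‑th column sum is $c_i$ and all of whose row sums equal $k$.

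I would prove the claim by induction on $m$. For $m=1$ we have $c_i\in\{0,1\}$ and $\sum_i c_i=k$, so the single row gets a one in exactly those $k$ columns for which $c_i=1$. For $m\ge 2$ (and, say, $1\le k\le n-1$, the cases $k\in\{0,n\}$ being trivial), reorder the columns so that $c_1\ge c_2\ge\cdots\ge c_n$, let the first new row have ones exactly in columns $1,\dots,k$, and put $c_i':=c_i-1$ for $i\le k$ and $c_i':=c_i$ for $i>k$. Then $\sum_i c_i'=(m-1)k$, and the two‑sided bound $0\le c_i'\le m-1$ persists: if $c_k=0$ then $c_i=0$ for all $i\ge k$, whence $mk=\sum_i c_i\le(k-1)m$, impossible for $m\ge 1$; and if $c_{k+1}=m$ then $c_1=\cdots=c_{k+1}=m$, whence $mk=\sum_i c_i\ge(k+1)m$, again impossible. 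The inductive hypothesis applied with $m-1$ in place of $m$ produces the remaining $m-1$ rows, and placing the constructed first row on top finishes the matrix. Taking $m=n-r$ and $c_i=k-N(i)$ in the claim then proves the theorem.

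The \textbf{main obstacle}, such as it is, lies entirely in the inductive step: one must make the right greedy choice for the new row — always occupying the $k$ columns of largest current residual — and then check that decrementing those residuals cannot push any entry below $0$ or above $m-1$; the two counting contradictions above are exactly what guarantees this. Everything else is bookkeeping. An alternative would be to model the claim as a degree‑constrained bipartite subgraph problem and invoke a max‑flow/min‑cut or defect form of König's theorem, paralleling the use of König's theorem in the proof of M.~Hall's theorem above; but the induction is shorter and completely elementary, so that is the route I would follow.
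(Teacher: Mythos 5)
Your proof is correct: the reduction to finding an $(n-r)\times n$ matrix $B$ with row sums $k$ and column sums $c_i=k-N(i)$ is right, the translation of the hypothesis into $0\le c_i\le n-r$ and the count $\sum_i c_i=(n-r)k$ are right, and the greedy induction (always fill the $k$ columns of largest residual) together with the two counting contradictions does keep every residual in $[0,m-1]$, so the induction closes. Note, however, that the paper does not actually prove this statement at all: Theorem~\ref{theo1.1} is quoted from Ryser's 1951 paper and used as a black box in the proofs of Ryser's theorem and of Theorem~\ref{theo1.6}. The closest the paper comes to your argument is Lemma~\ref{lemma1.5}, which places ones with prescribed column sums and row sums confined to an interval $[m,M]$ by a sort-and-push rearrangement; by the remark following it, taking $m=M=k$, $q=n-r$, $p=n$ and $c_j=k-N(j)$ in that lemma would produce exactly your matrix $B$ and hence also prove Theorem~\ref{theo1.1}. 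So your route is a self-contained, slightly more direct version (exact row sums from the start, built row by row) of the same underlying Gale--Ryser-type construction that the paper only develops in the more general setting of Lemma~\ref{lemma1.5}; what the paper's version buys is the two-sided bound $m\le N(i)\le M$ needed for Theorem~\ref{theo1.6}, which your exact-row-sum induction does not directly give.
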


and then
\begin{theo}[Ryser~\cite{[6]}]
Let $T$ be an $r$ by $s$ Latin rectangle based upon the integers $1,2,\ldots ,n$. Let $N(i)$ denote the number of times that the integer $i$ occurs in $T$. A necessary and sufficient condition in order that $T$ may be extended to a Latin square of order $n$ is that
\[
N(i) \geq r + s - n
\]
for each $i \in \mathbb{Z}_n$.
\end{theo}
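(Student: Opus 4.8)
The plan is to prove necessity by a direct count and sufficiency by first enlarging $T$ to an $r\times n$ Latin rectangle with the aid of Ryser's combinatorial theorem (Theorem~\ref{theo1.1}) and then invoking M.~Hall's theorem. For necessity I would place $T$ in the top-left $r\times s$ block of a Latin square $L$ of order $n$ and fix $i\in\mathbb{Z}_n$: in the first $r$ rows of $L$ the symbol $i$ occurs exactly $r$ times, in $r$ distinct columns, and at most $n-s$ of these columns can lie outside $\{1,\dots,s\}$, so at least $r+s-n$ of the occurrences lie inside $T$, giving $N(i)\ge r+s-n$.

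For sufficiency assume $N(i)\ge r+s-n$ for all $i$; if $s=n$ then $T$ is already a Latin rectangle and M.~Hall's theorem applies, so assume $s<n$. I would pass to the PLSC $P$ of $T$ and take the conjugate $P'$ that moves the column axis of $T$ onto the symbol axis, so that all rooks of $P'$ lie in a brick of size $n\times n\times s$ with the $s$ columns of $T$ occupying the symbol layers $1,\dots,s$. Its cover sheet $M$ has, in each of its first $r$ rows, a one exactly at the symbols missing from the corresponding row of $T$, hence $n-s$ ones, while the remaining rows are all ones. Restricting to the $r\times n$ sub-matrix $\widetilde M$ formed by the first $r$ rows, each row has $n-s$ ones and the $\sigma$-th column has $r-N(\sigma)$ ones; since $\sigma$ occurs at most once in each of the $s$ columns of $T$ we have $N(\sigma)\le s$, and by hypothesis $N(\sigma)\ge r+s-n$, which together read $r-s\le r-N(\sigma)\le n-s$ — exactly the hypothesis of Theorem~\ref{theo1.1} with $k=n-s$. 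So $n-r$ rows may be adjoined to $\widetilde M$ to yield an $n\times n$ $(0,1)$-matrix $M^{*}$ with $n-s$ ones in every row and column, and by K\H{o}nig's theorem~\cite{[5]} $M^{*}=L_{s+1}+\dots+L_{n}$ with each $L_{c}$ a permutation matrix.

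Next I would put a rook of $P'$ at $(\rho,\sigma,c)$ precisely when $\rho\le r$, $c\in\{s+1,\dots,n\}$ and $(L_{c})_{\rho\sigma}=1$ (the rows $\rho>r$, which entered $M^{*}$ only through the adjoined part of Theorem~\ref{theo1.1}, are not used). Because $M^{*}$ is a $(0,1)$-matrix, in any fixed row $\rho\le r$ the new rooks lie in $n-s$ distinct symbol layers; and because the first $r$ rows of $M^{*}$ coincide with those of the cover sheet $M$, each such position is one where $T$ does not already contain that symbol in that row. Hence these rooks are mutually non-attacking and non-attacking with the rooks of $P'$, and in each of the first $r$ rows of $P'$ they fill precisely the free symbol positions; conjugating back, we obtain an $r\times n$ Latin rectangle that extends $T$. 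A final application of M.~Hall's theorem completes it to a Latin square of order $n$ extending $T$.

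I expect the only genuine obstacle to be the verification in the last step: that the permutation matrices $L_{c}$, truncated to their first $r$ rows and transported back through the conjugacy, really do assemble into a Latin rectangle that agrees with $T$ on its first $s$ columns. Everything else — the counting for necessity, the column-sum bookkeeping that brings $\widetilde M$ into the form required by Theorem~\ref{theo1.1}, and the reduction to M.~Hall's theorem — is routine once the right conjugate has been chosen.
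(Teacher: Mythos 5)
Your proposal is correct and follows essentially the same route as the paper: pass to the conjugate whose symbol axis carries the columns of $T$, form the cover sheet, verify the hypotheses of Theorem~\ref{theo1.1} with $k=n-s$ using $r+s-n\le N(\sigma)\le s$, decompose via K\H{o}nig's theorem, fill the symbol layers $s+1,\dots,n$ to obtain an $r\times n$ Latin rectangle extending $T$, and finish with M.~Hall's theorem. The only additions are cosmetic (you work with the transposed orientation of the cover sheet and supply the necessity count, which the paper omits).
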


Using Theorem~\ref{theo1.1} we prove with the help of cover sheet.

\begin{proof}
Let $P$ be a PLSC derived from the conjugate of the Latin rectangle $T$ denoted by $(k,i,j)$. Then each column layer of $P$ contains exactly $s$ rooks and each rook is in the layers $1,2,\ldots ,s$, as shown in the Figure~\ref{fig1_5} and $N(i) \geq r+s$ for all $i = 1,2,\ldots ,n$ due to the Ryser’s condition. Take the cover sheet of the yellow brick $T$. The zeros do not matter, so they are hereafter omitted. For the matrix $M$ holds that each of the first $r$ columns contains $n-s$ ones. In the row layer $i$ of $P$ there are $N(i)$ rooks and obviously $N(i) \leq s$. Thus, the matrix $M$ contains $R(i) = r - N(i)$ ones in each row, as the example shows in the Figure~\ref{fig1_6}.
If $k = (n-s)$, then there are $k$ ones in each column of the matrix $M$ and each row contains $R(i) = r - N(i)$ ones.

\begin{figure}[!htb]
\centering
\begin{tabular}{c}
\hbox to 0.8\textwidth {
\includegraphics[scale=.3]{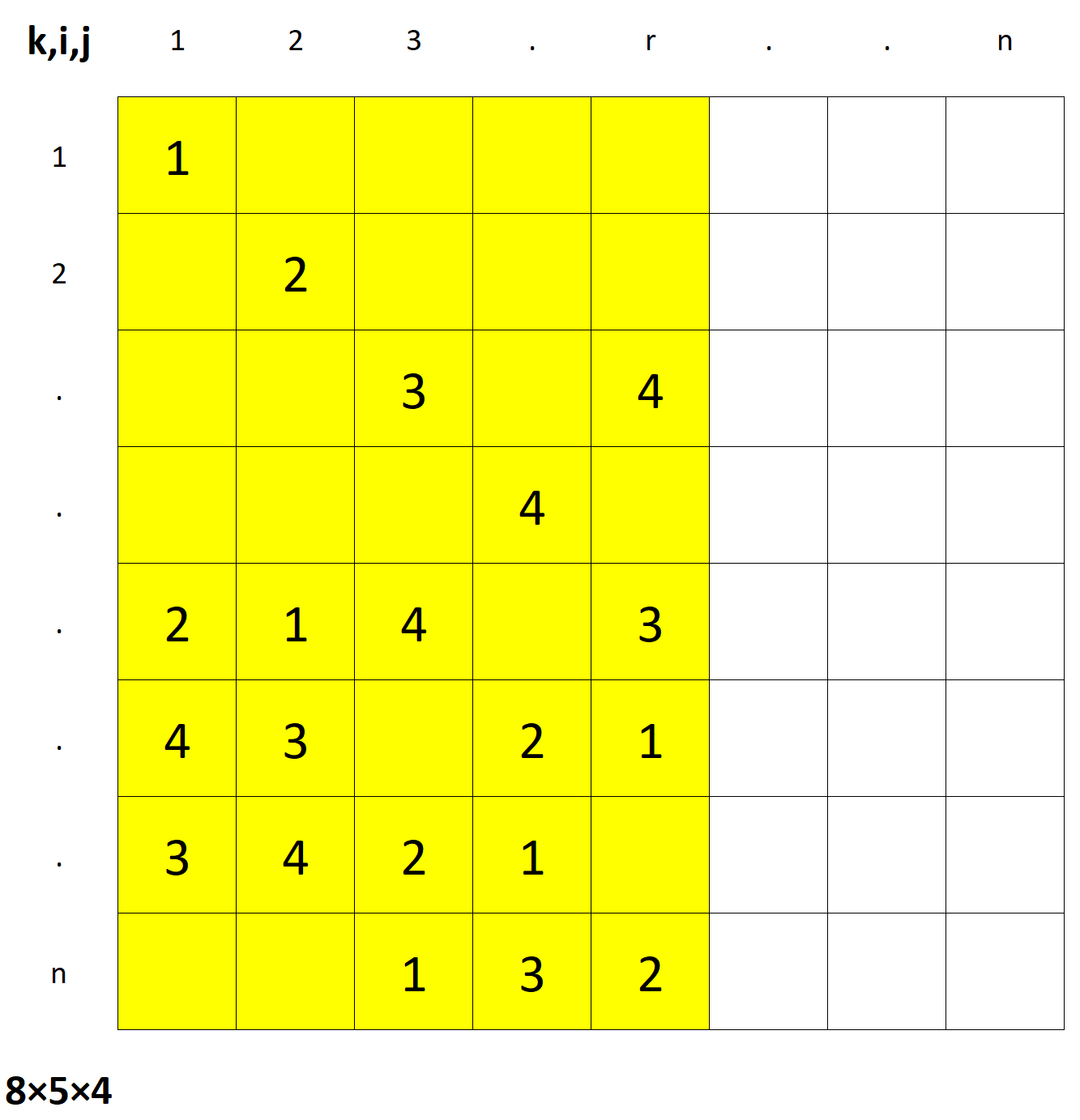}\hfill\includegraphics[scale=.3]{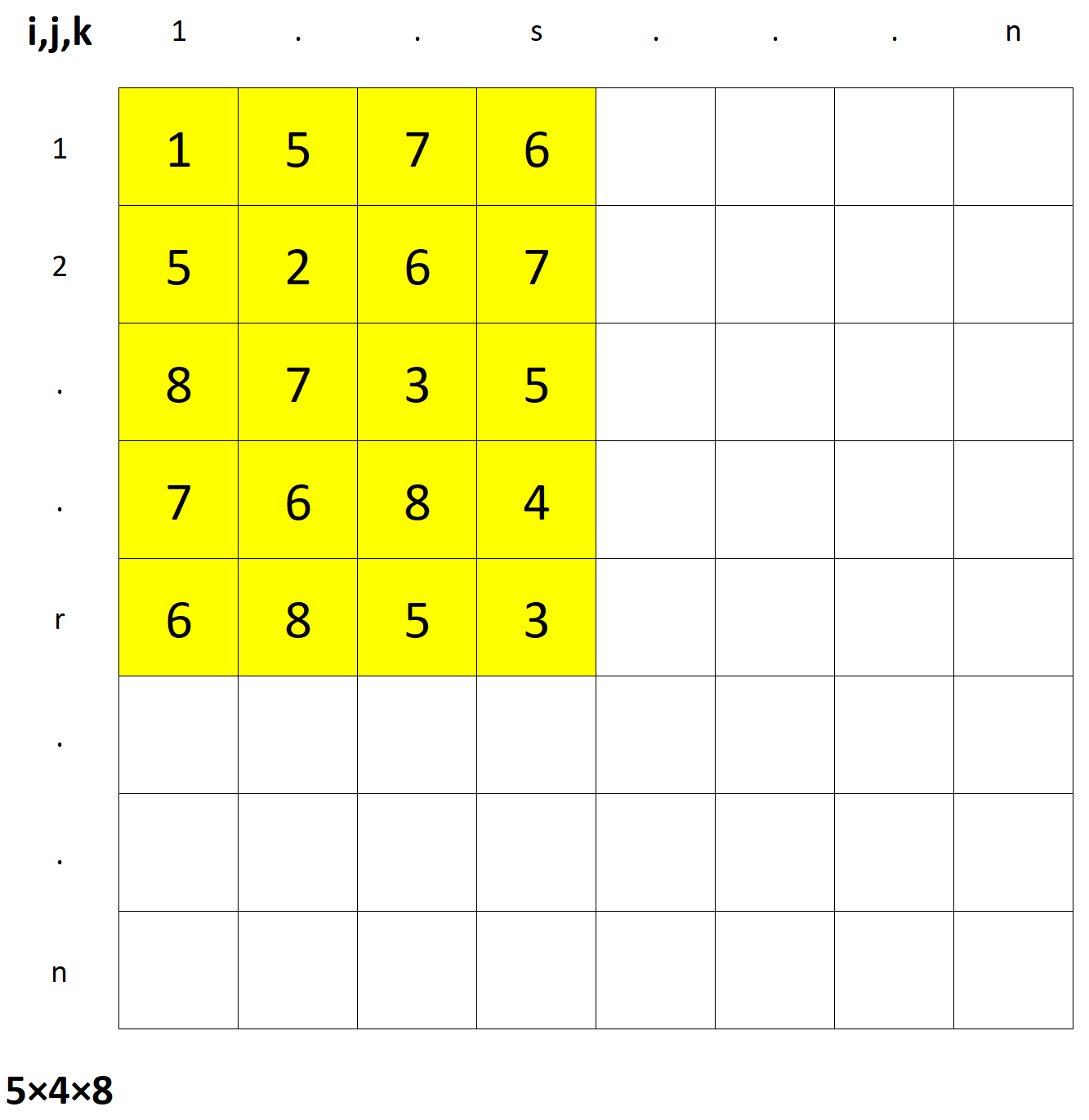}}
\end{tabular}
\caption{}\label{fig1_5}
\end{figure}

Because of the Ryser’s condition 
\[
N(i) \geq r+s-n  \Rightarrow  r-R(i) \geq r+s-n  \Rightarrow  (n-s) = k \geq R(i)
\]
Since each row contains at most $k$ ones and because of $N(i) \leq s$ it holds that
\[
(n-s) \leq n - N(i) \Rightarrow k = (n-s) \leq n - N(i) = (n-r) + R(i)
\]
ergo $k - (n-r) \leq R(i)$.
That is, based on Theorem~\ref{theo1.1}, the grey part of M can be filled by ones such that each row and each column of M contains exactly $k$  ones, as indicated in the Figure~\ref{fig1_6}.

\begin{figure}[htb]
\centering
\hbox to \textwidth {\includegraphics[scale=.3]{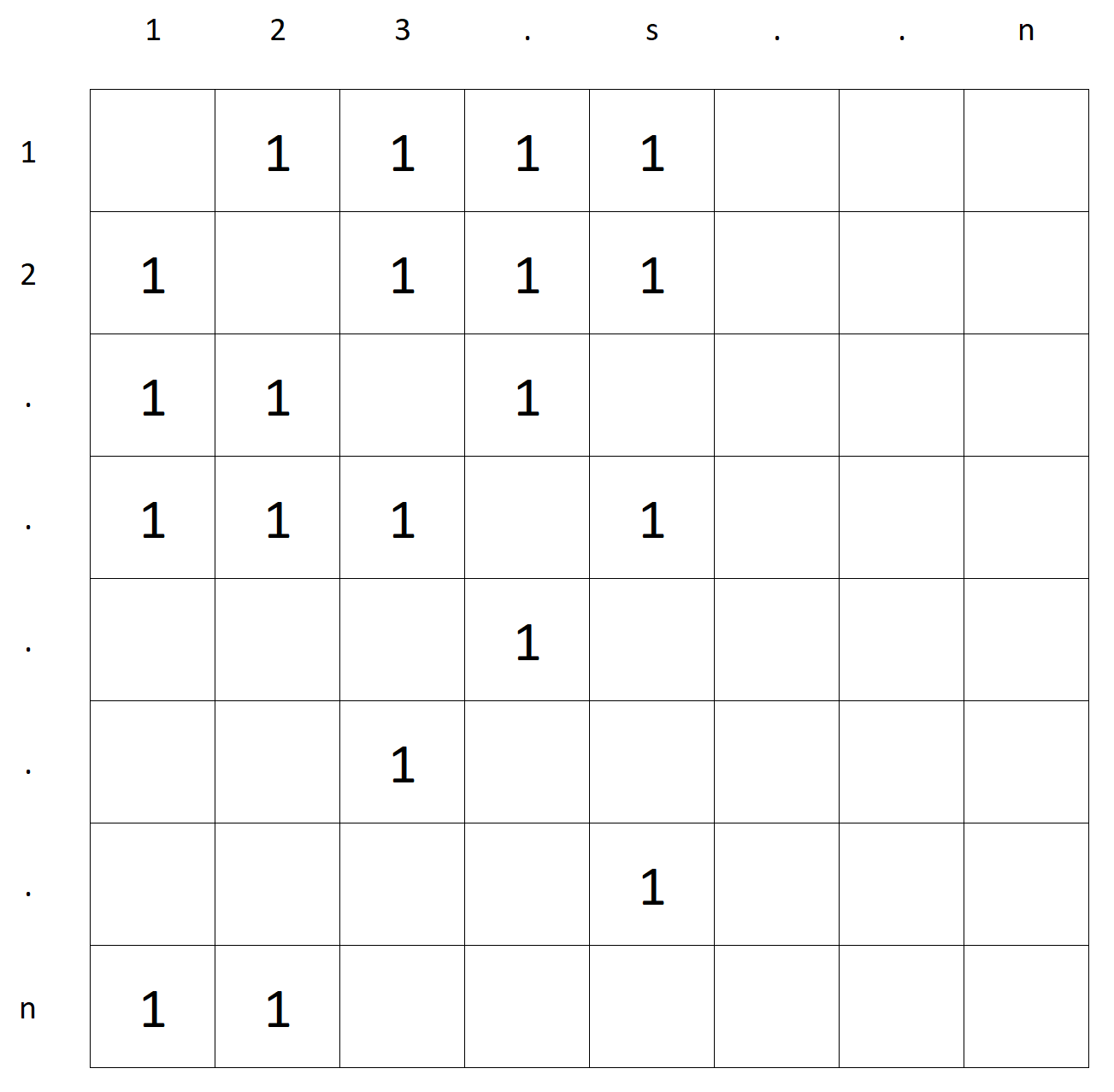}\hfill\includegraphics[scale=.25]{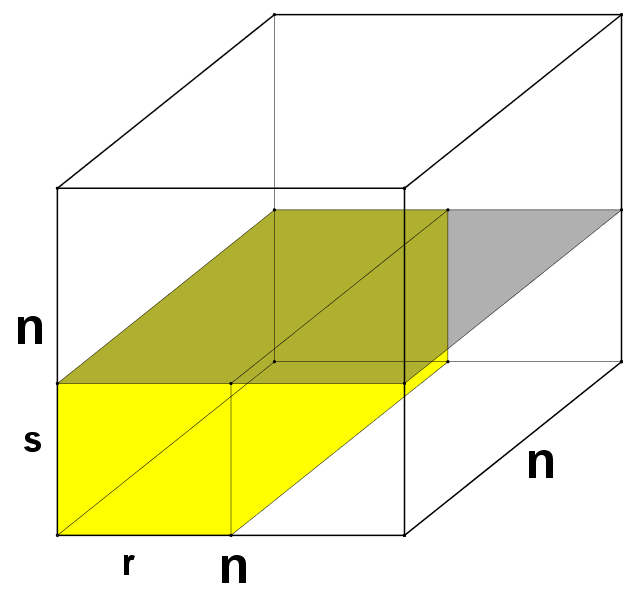}
\hfill\includegraphics[scale=.3]{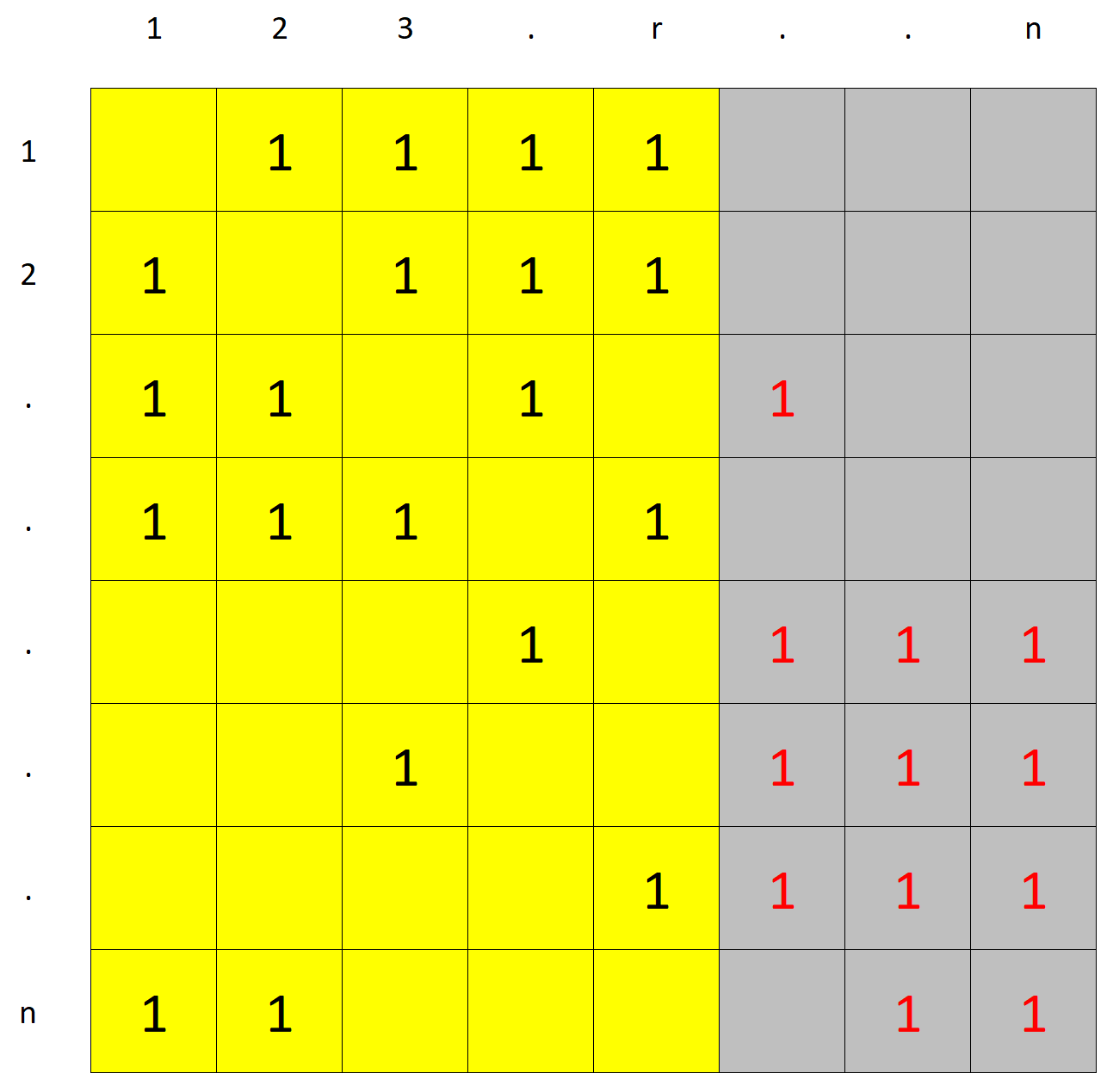}}
\caption{}\label{fig1_6}
\end{figure}

Let $M$ be the sum of the permutation matrices derived by the Kőnig's theorem~\cite{[5]}  
\[
M = L_{s+1} + L_{s+2} + \ldots + L_n
\]
Let $T_1$ be the vertical auxiliary brick of $T$ (green brick) and $T_2$ be the horizontal auxiliary brick of $T_1$ (grey brick). Now you can fill the layer $k$ based on $L_k$, where $k = s+1,s+2,\ldots ,n$, therefore $(T_1\cup T_2)$ and $(T\cup T_1)$ form completed $n^2$-bricks, as depicted in the Figure \ref{fig1_7}. Removing the "new" rooks from the brick $T_2$, the $n^2$-brick $(T\cup T_1)$ can be completed by M.~Hall's theorem~\cite{[2]}. The corresponding conjugate of the result is a completion of the original (yellow) brick $T$.

\begin{figure}[!htb]
\centering
\includegraphics [scale=0.33] {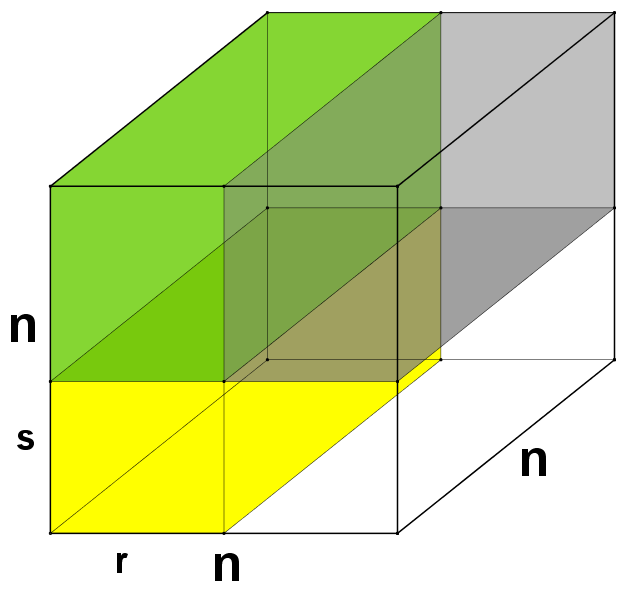}
\caption{}\label{fig1_7}
\end{figure}
\end{proof}
With the help of the method used in the previous proof, we prove the Cruse’s theorem.
 
\begin{theo} [Cruse's theorem] \label{theo1.4} 
Let $P$ be a PLSC in $H_n^3$ for which $ch(P)$ is a real brick $T_0$ of size $r\times s\times t$ ($r,s,t < n$) with $c_0$ rooks. The brick $T_0$ can be embedded in a Latin square of order $n$  exactly if the following four conditions are met:
\begin{enumerate} [1)]
\item\label{251} 
Each row    layer of $T_0$ contains at least $s+t-n$ rooks
\item\label{252} 
Each column layer of $T_0$ contains at least $r+t-n$ rooks
\item\label{253} 
Each symbol layer of $T_0$ contains at least $r+s-n$ rooks
\item\label{254} 
$c_0 \leq \capa(n,r,s,t)$ 
\end{enumerate}
\end{theo}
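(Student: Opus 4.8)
\emph{Overview and necessity.} The plan is to run the cover-sheet strategy of the two previous proofs once more, prefaced by a preparatory step that turns $T_0$ into an ordinary $r\times s$ Latin rectangle, after which Ryser's theorem~\cite{[6]} finishes the job. First dispose of necessity. Suppose $T_0$ is embedded in a Latin square $L$ of order $n$, so that the restriction of $L$ to the top-left $r\times s$ corner carries exactly the rooks of $T_0$ among the symbols $1,\dots,t$. For $k\le t$ the symbol layer of $L$ is a permutation matrix, and its intersection with any $r\times s$ window holds between $\max(0,r+s-n)$ and $\min(r,s)$ rooks; since these rooks are precisely the rooks of $T_0$ in symbol layer $k$, condition~\ref{253} follows, and the same permutation-matrix estimate applied to an $s\times t$, resp.\ $r\times t$, window gives conditions~\ref{251} and~\ref{252}. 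For condition~\ref{254}, count in $L$ the corner cells carrying a symbol $\le t$: there are exactly $c_0$ of them, while each of the $n-t$ symbols $>t$ must occupy at least $r+s-n$ corner cells, and rearranging the resulting inequality is exactly the defining inequality of $\capa(n,r,s,t)$ from~\cite{[4]}. I use nothing about $\capa$ beyond this.

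\emph{Sufficiency, preparatory step.} Assume \ref{251}--\ref{254}. Fill every empty cell of the $r\times s$ corner with a symbol from $\{t+1,\dots,n\}$ so that (i) no symbol repeats in a row or a column of the corner and (ii) each of $t+1,\dots,n$ is used at least $r+s-n$ times. Condition~\ref{251} bounds the number of empty corner cells in each row by $n-t$ and condition~\ref{252} does the same for columns, so the bipartite ``hole graph'' (rows and columns versus empty corner cells) has maximum degree $\le n-t$ and is $(n-t)$-edge-colourable by Kőnig's theorem~\cite{[5]}; any such colouring realises (i). To obtain (i) and (ii) simultaneously I set this up as an integral flow with lower bounds $r+s-n$ on the $n-t$ colour arcs; condition~\ref{254} is precisely the cut inequality making the flow feasible, which can be extracted either from the defect form of Kőnig's theorem or, in the style of the previous proof, from Theorem~\ref{theo1.1} applied to a padded cover sheet of $T_0$. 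Let $T_1$ denote the filled corner. When $r+s\le n$, condition (ii) is vacuous and nothing beyond (i) is needed; the content sits in the range $r+s>n$, which is also where condition~\ref{254} is not automatic.

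\emph{Sufficiency, finishing.} By construction $T_1$ is a genuine $r\times s$ Latin rectangle on $\{1,\dots,n\}$: the rooks of $T_0$ are non-attacking and the large symbols were inserted at most once per row and per column. In $T_1$ every symbol occurs at least $r+s-n$ times --- the small ones by condition~\ref{253}, the large ones by~(ii) --- so Ryser's theorem~\cite{[6]}, already proved above, extends $T_1$ to a Latin square $L$ of order $n$. That extension leaves the $r\times s$ corner equal to $T_1$, whose part in the symbols $1,\dots,t$ is exactly $T_0$, so $L$ embeds $T_0$. Since $t$ already sits on the symbol axis the orientation needs no adjustment, and in any case the remark following M.~Hall's theorem~\cite{[2]} carries the conclusion across the main class.

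\emph{Main obstacle.} Necessity and the plain edge-colouring giving~(i) are bookkeeping. The crux is to show that $c_0\le\capa(n,r,s,t)$ is not merely necessary but actually \emph{sufficient} for the hole-filling to be carried out with every large symbol used at least $r+s-n$ times, i.e.\ feasibility of the above flow carrying both the degree caps forced by conditions~\ref{251} and~\ref{252} and the demands $r+s-n$. The delicate point is to check that no cut other than the one recorded in condition~\ref{254} can be violated; I expect to handle this by repackaging the problem as a cover sheet of $T_0$ and invoking Theorem~\ref{theo1.1} and Kőnig's theorem~\cite{[5]} exactly as in the Ryser proof, with $\capa(n,r,s,t)$ re-emerging as the Kőnig deficiency bound.
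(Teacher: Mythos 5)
Your overall route is the paper's: turn the $r\times s$ corner into a Latin rectangle by assigning the symbols $t+1,\dots,n$ to its empty cells, then hand the result to Ryser's theorem. Your necessity argument is fine (the paper simply cites its Distribution Theorem and omits it). The gap is in the step you yourself flag as the main obstacle: you never actually construct a filling satisfying both (i) and (ii), and the primary resolution you lean on --- an integral flow with lower bounds $r+s-n$ on the $n-t$ colour arcs, with condition~\ref{254}) read as ``the cut inequality'' --- does not work as described. A single-commodity flow of that shape can control how many times each colour is used, but it cannot enforce that each colour class is a partial permutation matrix, which is exactly requirement (i); and condition~\ref{254}) is not the feasibility certificate for those lower bounds in any case.

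The paper resolves this differently, and the difference is worth absorbing: it never imposes (ii) directly. It extends the cover sheet of $T_0$ (the $(0,1)$-matrix recording the empty corner cells) to a full $n\times n$ matrix with exactly $k=n-t$ ones in every row and every column --- this is where all the work sits (Lemma~\ref{lemma1.5}, then Theorem~\ref{theo1.6} for the vertical auxiliary strip, then Theorem~\ref{theo1.1} for the rest) --- and decomposes that matrix by Kőnig's theorem into $n-t$ permutation matrices $L_{t+1},\dots,L_n$. Each $L_k$ is a full permutation matrix of order $n$, so its trace on the $r\times s$ corner has at least $r+s-n$ cells automatically; your condition (ii) is inherited, not engineered. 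Condition~\ref{254}) is consumed earlier and for a different purpose: $c_0\le\capa(n,r,s,t)$ is equivalent to $a_0=rs-c_0\ge (n-t)(r+s-n)$, which is precisely what guarantees (via Lemma~\ref{lemma1.5} with $q=n-r$ and $M=n-t$) that the columns of the cover sheet can be topped up to $n-t$ ones each without any of the $n-r$ new rows receiving more than $n-t$ ones. To close your proof you must either carry out this cover-sheet extension in full, or replace the flow by a balanced (equitable) edge-colouring theorem for bipartite graphs, which yields $n-t$ matchings each of size at least $\lfloor a_0/(n-t)\rfloor\ge r+s-n$; either way the content of conditions~\ref{251})--\ref{254}) has to be spent on the existence of the doubly $(n-t)$-regular extension, not on a cut condition for colour-class sizes.
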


Conditions 1)-4) are collectively referred to as the \emph{Cruse’s condition system}.
The conditions are obviously necessary due to the Distribution Theorem. We only prove that they are sufficient.
First, we prove a lemma.
\begin{lemma}\label{lemma1.5}
Let $T$ be a matrix of size $p \times q$ and let $0 \leq c_1, c_2,\ldots , c_p \leq q$ be integers. Suppose that $N = c_1 + c_2 +\ldots + c_p$ and for integers $m$ and $M$ it holds that $0 \leq m, M \leq p$ and $mq \leq N \leq Mq$. Then $N$ ones can be placed in the cells of $T$ so that there are exactly $c_j$ ones in the column $j$ for each $j \in \{1,2,\ldots, p\}$, and $m \leq N(i) \leq M$ for each $i \in \{1,2,\ldots, q\}$, where $N(i)$ denotes the number of ones in the row $i$.
\end{lemma}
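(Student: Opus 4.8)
The plan is to give an explicit construction: fill the matrix column by column using a cyclic ``round-robin'' rule, and then read off the row sums. I would first dispose of the trivial case $q=0$ (then $N=0$ and the empty matrix works) and assume $q\ge 1$. Processing the columns in any order, maintain a pointer that runs through the rows $1,2,\dots,q,1,2,\dots$ cyclically; to fill column $j$, put its $c_j$ ones into the $c_j$ consecutive rows starting at the current pointer position (row indices taken modulo $q$), and then advance the pointer by $c_j$.

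Next I would check that this really produces a $(0,1)$-matrix with the prescribed column sums. Because $0\le c_j\le q$, the $c_j$ rows used inside column $j$ are pairwise distinct, so column $j$ receives exactly $c_j$ ones and no cell is written twice; hence the column sums are correct and the total number of ones is $N=c_1+\dots+c_p$. This is the only point where the hypothesis $c_j\le q$ enters.

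Then I would bound the row sums. Over the whole process the pointer advances $N$ times in all, each advance landing on one row in cyclic order; writing $N=aq+b$ with $0\le b<q$, the first $aq$ advances form $a$ complete sweeps that add one entry to every row, and the remaining $b$ advances hit $b$ further rows, so every row sum $N(i)$ equals $a=\lfloor N/q\rfloor$ or $a+1=\lceil N/q\rceil$. Finally, $mq\le N\le Mq$ together with $m,M\in\mathbb{Z}$ gives $m\le\lfloor N/q\rfloor$ and $\lceil N/q\rceil\le M$, hence $m\le N(i)\le M$ for every $i$, which is exactly what is required (the bounds $m,M\le p$ are not needed).

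I do not expect a serious obstacle here; the only step that needs a careful (though completely standard) argument is the round-robin invariant that $N$ tokens dealt cyclically among $q$ recipients leave each recipient with $\lfloor N/q\rfloor$ or $\lceil N/q\rceil$ tokens. If one prefers to avoid even that, the same statement follows by induction on $q$: peel off one row and give it some integer number $d$ of ones, where the chosen set of $d$ columns must contain every column with $c_j=q$ and omit every column with $c_j=0$, and must also leave a feasible $(q-1)$-row subproblem (column sums $c_j$, reduced by $1$ on the chosen columns, total $N-d$, still bounded by $m$ and $M$, so $p_q\le d\le p-p_0$, $m\le d\le M$, and $m(q-1)\le N-d\le M(q-1)$, where $p_q$ and $p_0$ count the columns with $c_j=q$ and $c_j=0$). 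One then checks from $mq\le N\le Mq$ and $N\le(p-p_0)q$ that these intervals always overlap in an integer, applies the induction hypothesis to the subproblem, and reattaches the peeled row.
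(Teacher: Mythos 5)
Your round-robin construction is correct, and it is a genuinely different argument from the one in the paper. The paper starts from a canonical arrangement (each column's $c_j$ ones bottom-justified, columns sorted so the counts are non-increasing, giving a staircase shape) and then repairs the row sums by local shifts: ones are pushed upward out of an overfull bottom row, the bottom row is peeled off, and the process recurses, with a second, similar pass to enforce the lower bound $m$; the correctness of each shift rests on small feasibility arguments such as ``no $(M+1)\times q$ all-ones rectangle can exist because $N\le Mq$.'' Your one-pass cyclic dealing avoids all of that: since each column uses $c_j\le q$ consecutive cyclic positions, the cells within a column are distinct, and the pointer lands on each row exactly $\lfloor N/q\rfloor$ or $\lceil N/q\rceil$ times, after which the integrality of $m$ and $M$ together with $mq\le N\le Mq$ gives $m\le\lfloor N/q\rfloor$ and $\lceil N/q\rceil\le M$. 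This is shorter, needs no termination or non-overflow argument, and actually proves a slightly stronger statement --- the row sums can always be made as balanced as possible --- which in particular yields the paper's subsequent Remark (equal row sums whenever $q\mid N$) as an immediate corollary rather than a separate observation. The inductive fallback you sketch at the end is not needed and is the only part left incomplete; the main argument stands on its own.
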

\begin{proof} 
For each $j \in \{1,2,\ldots, p\}$ place $c_j$ ones in the column $j$ from bottom to top and permute the columns so that the number of ones contained does not increase compared to the number in the previous column, shown in the left-hand side of Figure~\ref{fig1_8}.
Notice, that if there is a one in a cell, then there is a one in each cell below it in its column and in each cell on the left in its row, so the rectangle to the lower left corner is full of ones. If there are at most $M$ ones in the bottom row, then there are at most $M$ ones in each row, one side of the inequality holds. If there are more than $M$ ones in the bottom row, we push the ones of the columns $M+1,\ldots ,p$ upwards by 1 (green cells on the left-hand side of the Figure~\ref{fig1_8}). 
There is no ones in the first row of these columns (yellow cells in the left-hand side of the Figure~\ref{fig1_8}), otherwise there would be a rectangle of size $(M+1)\times q$ full of ones. Since $N - M \leq (q-1)M$, the procedure can be continued with the subsystem $T^*$ formed by disregarding the bottom line containing exactly $M$ ones. We reconstruct the monotonic decrease of the columns in $T^*$ by permuting the whole columns (including the cells below $T^*$) and continue the procedure with $T^*$. 
Finally, $N(i) \leq M$ for all $i \leq {1,2,\ldots ,q}$.

Starting from the resulting structure of the previous procedure, we produce the matrix for which the other side of the inequality holds. Permute the rows so that the number of ones contained increases monotonically, with the fewest ones in the first row. If $N(1) \geq m$, then we are ready. So, $N(1) < m$. Permute the columns so that the ones in the first row are to the left. In the bottom row, going from right to left, find the first cell that contains a one (green cells on the right-hand side of the Figure~\ref{fig1_8}). If there is no single in the lowest green cells of the columns $p, p-1,\ldots ,m + 1$, then the bottom row and thus each row contains at most $m$ ones, and the first row contains less, so it is a contradiction. Push the one found in the last row up into the first row, then continue the procedure from the beginning with the whole system thus obtained.
Finally, $m \leq N(i)$ for all $i \in \{1,2,\ldots , q\}$.
\end{proof}
\begin{rmrk}
If $N/q$ is an integer, then it is possible that each row contains the same number of ones, namely $m = M = N/q$. 
\end{rmrk}
\begin{figure}[!htb]
\centering
\begin{tabular}{c}
\hbox to .97\textwidth {\includegraphics[scale=.45]{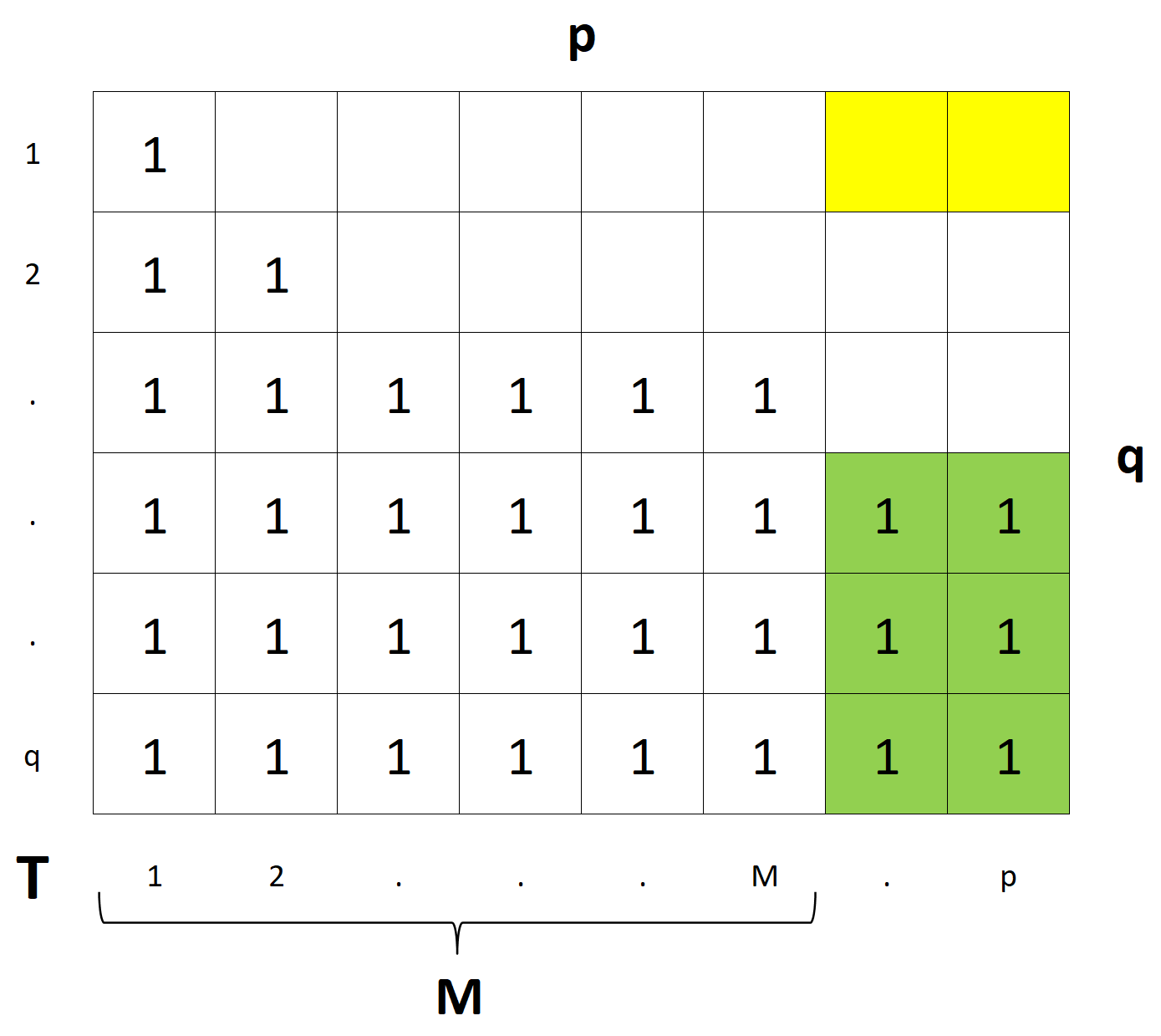}
\hfill\includegraphics[scale=.48]{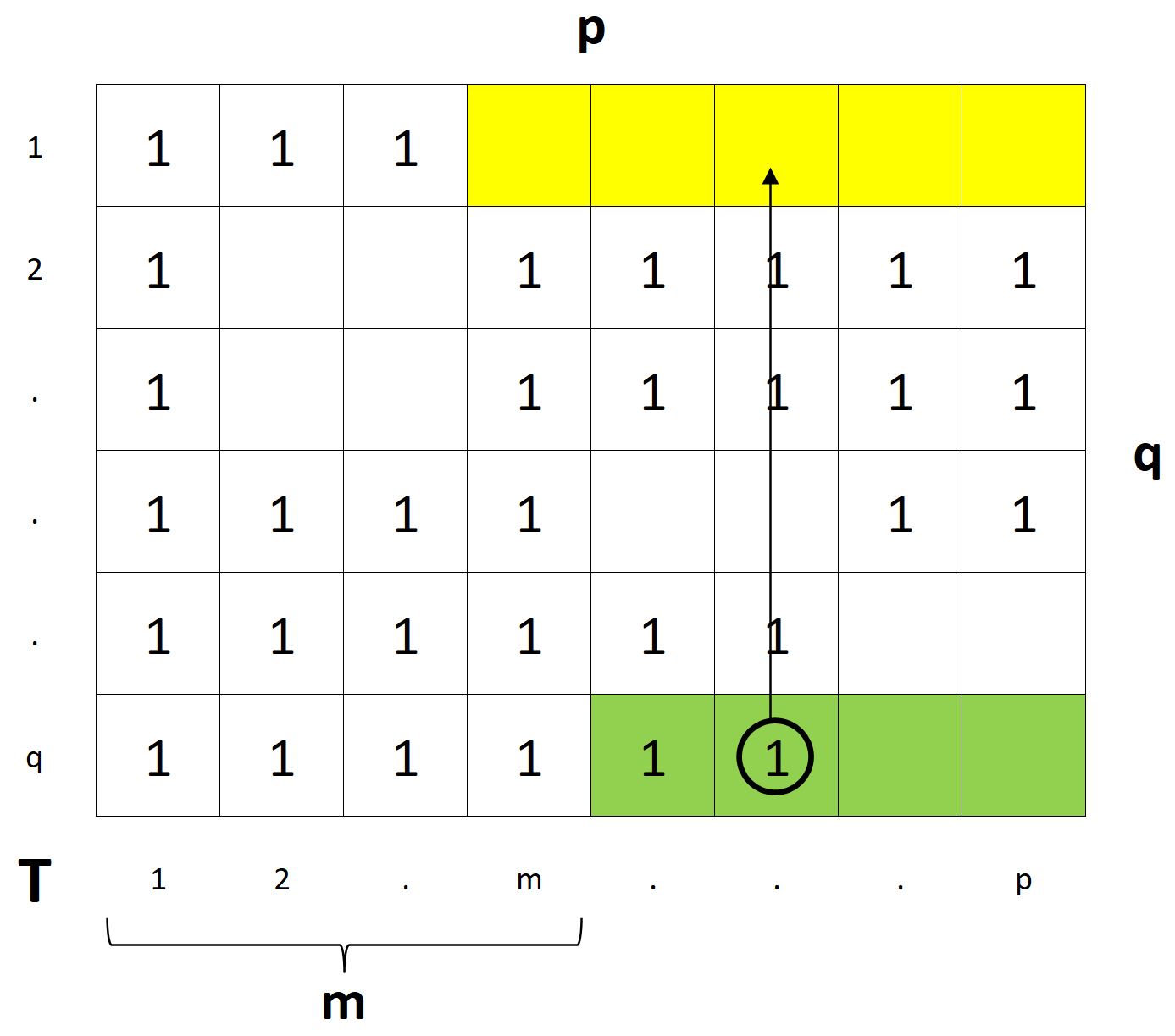}}
\end{tabular}
\caption{}\label{fig1_8}
\end{figure}

\goodbreak

Now we prove a generalization of Ryser’s combinatorial theorem~\ref{theo1.1}.

\begin{theo} \label{theo1.6}
Let $A$ be a rectangle of the size $r\times s$ of a matrix of order $n$. Suppose that each row of $A$ contains at least $s+k-n$ and at most $k$ ones, each column of $A$ contains at least $r+k-n$ and at most $k$ ones. Let $a_0$ be the number of ones in $A$. An illustration is shown in the Figure~\ref{fig1_9}.
If it holds for $a_0$ that
\begin{enumerate}[1)]
\item\label{271} 
$(r+s-n)k \leq a_0$ 
\item\label{272} 
$a_0 \leq rs - (n-k)(r+s-n)$
\end{enumerate}
then the vertical auxiliary brick of $A$, the brick $B$, can be filled with ones such that each column of the brick $A\cup B$ contains exactly $k$ ones, each row contains at least $s+k-n$ and at most $k$ ones, consequently, $A\cup B$ satisfies the conditions of the Ryser’s combinatorial theorem.
\end{theo}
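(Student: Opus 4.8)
The plan is to reduce the statement to Lemma~\ref{lemma1.5}, applied to the auxiliary brick $B$; the row and column bounds on $A$ together with the two inequalities on $a_0$ will supply exactly the hypotheses that Lemma~\ref{lemma1.5} needs.

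First I would pass to the column counts of $A$. For each column $j\in\{1,\dots,s\}$ let $a_j$ be the number of ones in it, so $r+k-n\le a_j\le k$ by hypothesis. To force column $j$ of $A\cup B$ to contain exactly $k$ ones we must place $c_j:=k-a_j$ ones in column $j$ of $B$, and the bounds on $a_j$ translate into $0\le c_j\le n-r$, the height of $B$. Writing $q:=n-r$ and $N:=\sum_{j=1}^{s}c_j=sk-a_0$, I would invoke Lemma~\ref{lemma1.5} for $B$ with $p=s$, with this $q$, with the column targets $c_1,\dots,c_s$, and with row bounds $m:=\max(0,\,s+k-n)$ and $M:=\min(k,s)$. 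It yields a filling of $B$ in which column $j$ carries $c_j$ ones and every row carries between $m$ and $M$ ones. Since $m\ge s+k-n$ and $M\le k$, every row of $B$ -- hence every new row of $A\cup B$ -- then carries between $s+k-n$ and $k$ ones; the rows of $A$ already do so by hypothesis; and every column of $A\cup B$ carries $a_j+c_j=k$ ones. This is precisely the configuration covered by Ryser's combinatorial Theorem~\ref{theo1.1}, after the obvious transposition that interchanges ``exactly $k$ per column'' with ``exactly $k$ per row'' (reading its ``$r$'' as $s$ and its ``$n$'' as $n$).

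The only point with real content is checking the hypothesis $mq\le N\le Mq$ of Lemma~\ref{lemma1.5}. For the lower bound I would expand the identity $(s+k-n)(n-r)=sk-\bigl(rs-(n-k)(r+s-n)\bigr)$; it shows that, when $s+k-n\ge 0$, the inequality $mq\le N$ is nothing but condition~\ref{272} rearranged, while when $s+k-n<0$ it holds trivially since then $mq=0\le N$. For the upper bound: if $M=k$, then $N\le k(n-r)$ rearranges to $(r+s-n)k\le a_0$, which is condition~\ref{271}; if instead $M=s$ (the case $s<k$) one only needs $(r+s-n)s\le a_0$, and that follows from condition~\ref{271} because $(r+s-n)s\le (r+s-n)k$ when $r+s-n\ge 0$ and $(r+s-n)s<0\le a_0$ otherwise. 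The remaining admissibility requirements of the lemma, $0\le c_j\le q$ and $0\le m\le M\le p$, reduce at once to $r+k-n\le a_j\le k$ and to $k\le n$, $s\le n$.

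I do not expect a genuine obstacle beyond bookkeeping: matching the row/column convention of Lemma~\ref{lemma1.5} to the orientation of $B$, handling the degenerate ranges ($s+k-n\le 0$, $k>s$, and the boundary instances of conditions~\ref{271}--\ref{272}) in one stroke, and remembering that the conclusion prescribes the row bound for \emph{every} row of $A\cup B$, not only of $B$, so that the rows of $A$ are to be covered by the hypothesis rather than reproved. Once the identity linking condition~\ref{272} to the lower bound of Lemma~\ref{lemma1.5} is written down, the rest is immediate.
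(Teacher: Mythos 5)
Your proposal follows essentially the same route as the paper: it reduces the statement to Lemma~\ref{lemma1.5} with $p=s$, $q=n-r$, $c_j=k-S(j)$, $N=sk-a_0$, and verifies $qm\le N\le qM$ from conditions~\ref{271}) and~\ref{272}) via the same two identities. The only quibble is in the degenerate case $k>s$ that you add (and the paper silently ignores by taking $M=k$): the bound you actually need there is $s(r+k-n)\le a_0$, not $(r+s-n)s\le a_0$, but it follows at once by summing $a_j\ge r+k-n$ over the $s$ columns, so nothing is lost.
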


\begin{figure}[htb]
\centering
\includegraphics [scale=0.28] {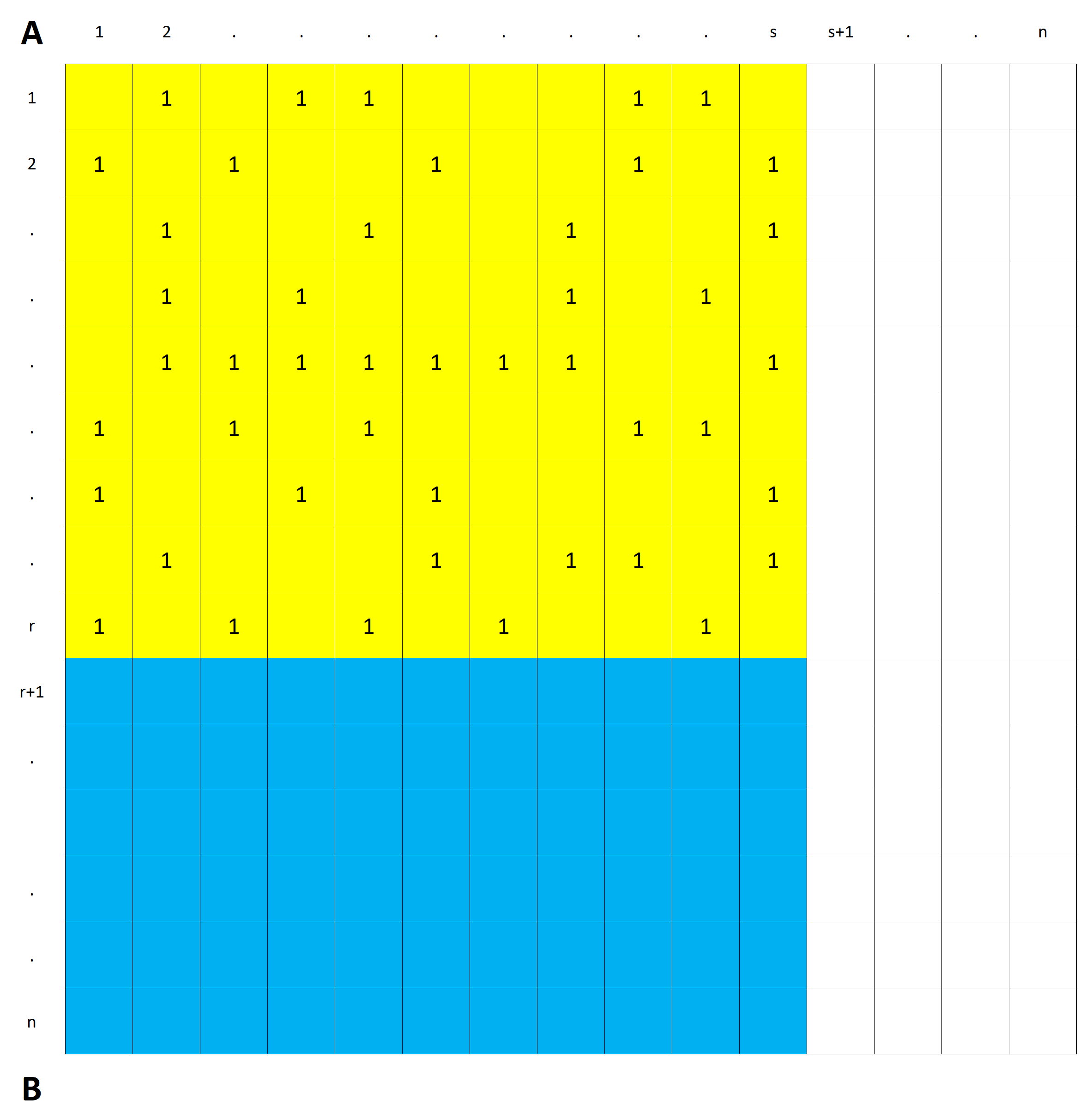}
\caption{}\label{fig1_9}
\end{figure}
\begin{proof}
We show that the conditions for $a_0$ are also necessary. A filled brick $A\cup B$ contains $sk$ ones. The blue brick $B$ must contain at least 
$(n-r)(s+k-n)$ ones, otherwise it would have a row that contains less than $(s+k-n)$ ones, i.e., 
\[
a_0 \leq sk - (n-r)(s+k-n)
\]
The blue brick $B$ can contain at most $(n-r)k$ ones, otherwise there would be a row containing more than $k$, therefore
$sk - (n-r)k = (r+s-n)k \leq a_0$.
Let $S(j)$ denote the number of ones in the column $j$ of brick $A$ for each 
$j \in \{1,2,\ldots ,s\}$. Let $c_j = k - S(j)$ for $j \in \{1,2,\ldots,s\}$, then 
\[
N = sk - \sum_{j=1}^sS(j).
\]
We show that for $m = (s + k-n)$, $M = k$, $p = s$ and $q = (n-r)$, the conditions of the Lemma~\ref{lemma1.5} are satisfied, that is, $qm \leq N \leq qM$.
Because of the condition~\ref{271}) of Theorem~\ref{theo1.6}
\[
\sum_{j=1}^sS(j) \geq (r+s-n)k
\] ergo
\[
N = sk - \sum_{j=1}^sS(j) \leq sk - (r+s-n)k = (n-r)k = qM
\]
hence $N \leq qM$.
By the condition~\ref{272}) of Theorem~\ref{theo1.6}
\[a_0 \leq rs-(n-k)(r+s-n).\]
Then because of 
\[
N = sk - \sum_{j=1}^sS(j),
\]
\[  
N \geq sk - [rs - (n-k)(r+s-n)] = (n-r)(s+k-n) = qm,
\]
so $qm \leq N$.
That is, the conditions of the lemma are satisfied, so the brick $B$ can be filled with ones such that each column of the brick $A\cup B$ contains exactly $k$  ones and each row of the brick $A\cup B$ contains at least $s+k-n$ and at most $k$ ones, i.e., $A\cup B$ satisfies the condition of Ryser’s theorem.
\end{proof}
Now we prove the Theorem~\ref{theo1.4}.
\begin{proof}
Suppose that a brick $T$ of size $r\times s\times t$ satisfies the Cruse’s condition system. Rotate the brick $T$ so that $r \geq s \geq t$. We show that the conditions of the Theorem~\ref{theo1.6} are satisfied for the cover sheet of $T$. Let $k = (n-t)$. If there are at least $s+t-n$ rooks in each row layer, then there are at most $s-(s+t-n)=(n-t)=k$ ones in the corresponding row of the cover sheet. If there are at most $t$ rooks in each row layer, then there are at least $(s-t) = s+(n-t)-n = s+k-n$ ones in the corresponding row of the cover sheet. It can also be seen that the corresponding columns of the cover sheet has at most $r-(r+t-n) = (n-t) = k$ and at least $r+(n-t)-n = r+k-n$ ones. 
\begin{figure}[!htb]
\centering
\begin{tabular}{c}
\hbox to .95\textwidth {\includegraphics[scale=.07]{CaB_Figures/fig1_10a}
\hfill\includegraphics[scale=.07]{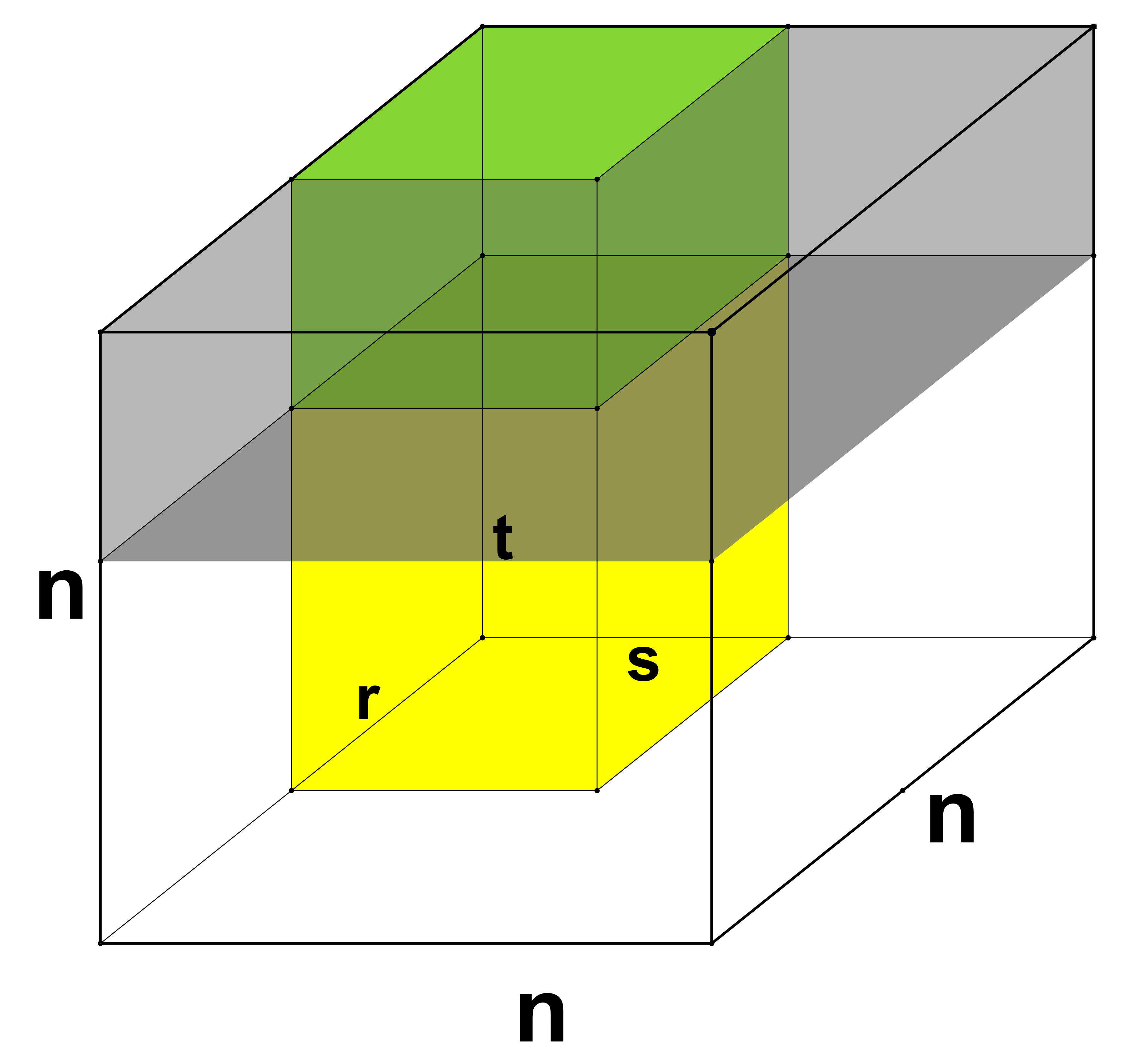}}
\end{tabular}
\caption{}\label{fig1_10}
\end{figure}

The yellow brick $T_0$ contains at most $\capa(n,r,s,t)$ rooks, so the yellow part of the gray cover sheet contains at least 
\[rs - \capa(n,r,s,t) = (r+s-n)(n-t) = (r+s-n)k
\]
ones, ergo condition~\ref{271}) of Theorem~\ref{theo1.6} is satisfied. Each symbol layer of the yellow brick has at least $(r+s-n)$, so the layers of $T_0$ contain a total of at least $t(r+s-n)$ rooks, so the yellow part of the cover sheet contains at most $rs -t(r+s-n) = \capa(n,r,s,k)$ ones, that is, condition~\ref{272}) of Theorem~\ref{theo1.6} is satisfied. The cover sheet of the yellow brick can be filled into a matrix of size $n\times n$, each row and column of which contains exactly $k$ ones. With the help of the resulting matrix, the symbol layers $t+1,t+2,\ldots,n$ can be completed with rooks such that each layer contains $n$ rooks and the roooks are non-attacking. The resulting
$n^2$-brick is denoted by $T_{n-t}$ (grey brick) and a part of it, the vertical auxiliary brick of $T_0$ is denoted by $T_1$ (green brick). We prove that the conditions of the Ryser’s theorem are satisfied for the $n$-brick $(T_0\cup T_1)$ of size $r\times s\times n$. For each symbol layer $\sigma$ of the yellow brick $T_0$, $N(\sigma) \geq r+s-n$ holds because of condition~\ref{253}) of the Cruse' condition system. Since this is a necessary condition for completion of a given symbol layer, it is obviously met for a completed layer and the brick $T_1$ is a part of a completed $n^2$-brick. So, for each symbol layer of the brick $T_1$ is satisfied that $N(\sigma) \geq r+s-n$. Ergo, if you remove all rooks in the $n^2$-brick $T_{n-t}$ outside brick $T_1$, then the PLSC consisting of the $n$-brick $(T_0\cup T_1)$ can be completed because of the Ryser’s theorem.
\end{proof}

\begin{rmrk}
Since $\capa(n,r,s,t) = rs - (n-t)(r+s-n)$, the capacity condition for the yellow brick $T_0$ ensures that the Ryser’s condition can be met for all suitable layers of the green auxiliary bricks of $T_0$. But, $\capa(n,r,s,t) = st - (n-r)(s+t-n)$ and $\capa(n,r,s,t) = rt - (n-s)(r+t-n)$, so the Ryser’s condition can be met for all suitable layers of all auxiliary bricks of $T_0$.
\end{rmrk}

\FloatBarrier

\section{Embedding Compact Bricks}
Let $T_0$ be a brick of size $a\times b\times c$. $T_0$ can be artificially created, that means $T_0$ is not necessarily a brick of a PLSC. Suppose that the rooks in $T_0$ are non-attacking and the files containing rooks do not contain dots.

\begin{defi}
A layer of $T_0$ is called \emph{possible}  if the number of parallel files of the layer containing dots or rooks are identical in both directions of the layer. This number is called the \emph{potential}  of the layer of $T_0$.
\end{defi}

The symbol layer 1 with potential 4 can be seen on the right-hand side of Figure~\ref{fig2_1}. If you place a dot in the cell $(3,3,1)$ then the symbol layer 1 is no longer possible. Let $N_x(i,T_0)$, $N_y(j,T_0)$ and $N_z(\sigma,T_0)$ denote the potentials of the layers of $T_0$, where $i = \{1,2,\ldots ,a\}$, $j = \{1,2,\ldots ,b\}$ and $\sigma = \{1,2,\ldots,c\}$. Easy to check, that all layers of $T_0$ in the left-hand side of Figure~\ref{fig2_1} are possible.

\begin{defi}
The brick $T_0$ is called \emph{compact} if each layer of $T_0$ is possible and
\[
\sum_{i=1}^aN_x(i,T_0) = \sum_{j=1}^bN_y(j,T_0) = \sum_{\sigma=1}^cN_z(\sigma,T_0)
\]
This value is called the \emph{potential} of the brick $T_0$.
\end{defi}

The cases $c = n$, $b = c = n$ and $a = b = c = n$ are allowed. The potential of the compact brick $T_0$ is 15 in the left-hand side of Figure~\ref{fig2_1}.

Let $T_0$ be a compact brick, where 
\[
\sum_{i=1}^aN_x(i,T_0) = \sum_{j=1}^bN_y(j,T_0) = \sum_{\sigma=1}^cN_z(\sigma,T_0) = p_0
\]

\begin{figure}[!htb]
\centering
\begin{tabular}{c}
\hbox to .85\textwidth {\includegraphics[scale=.55]{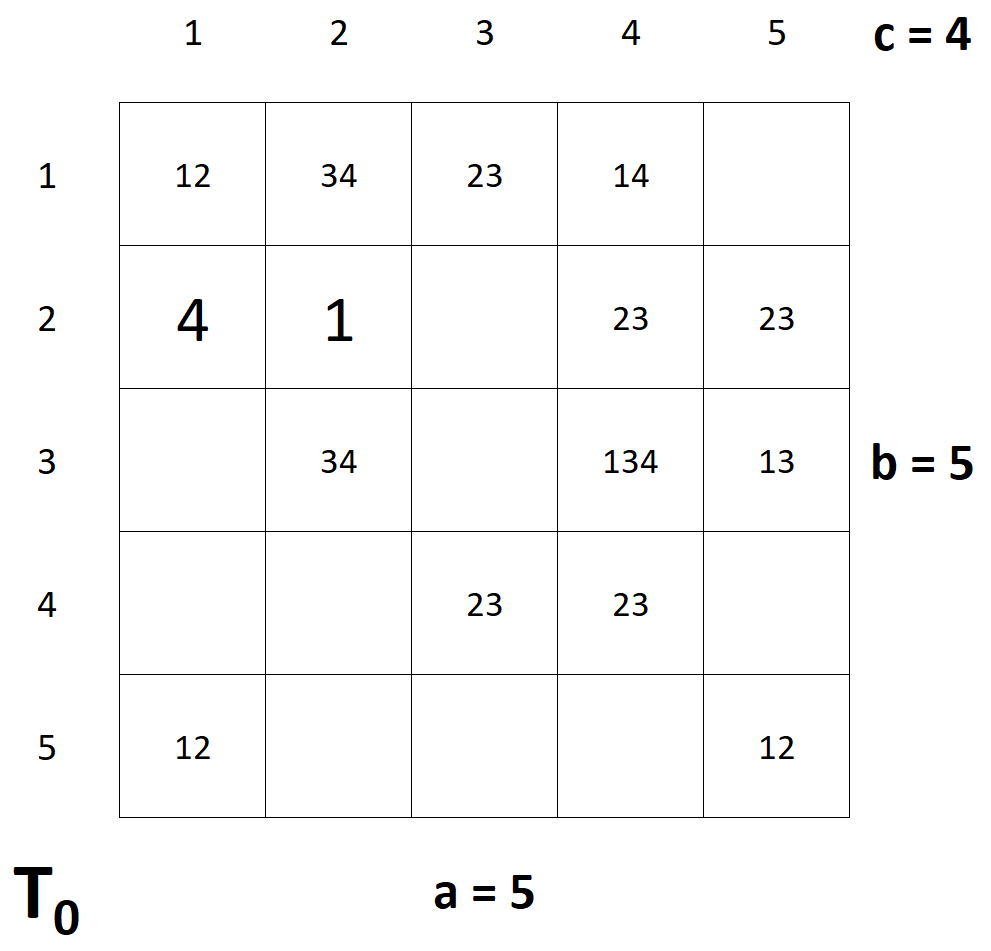}
\hfill\includegraphics[scale=.55]{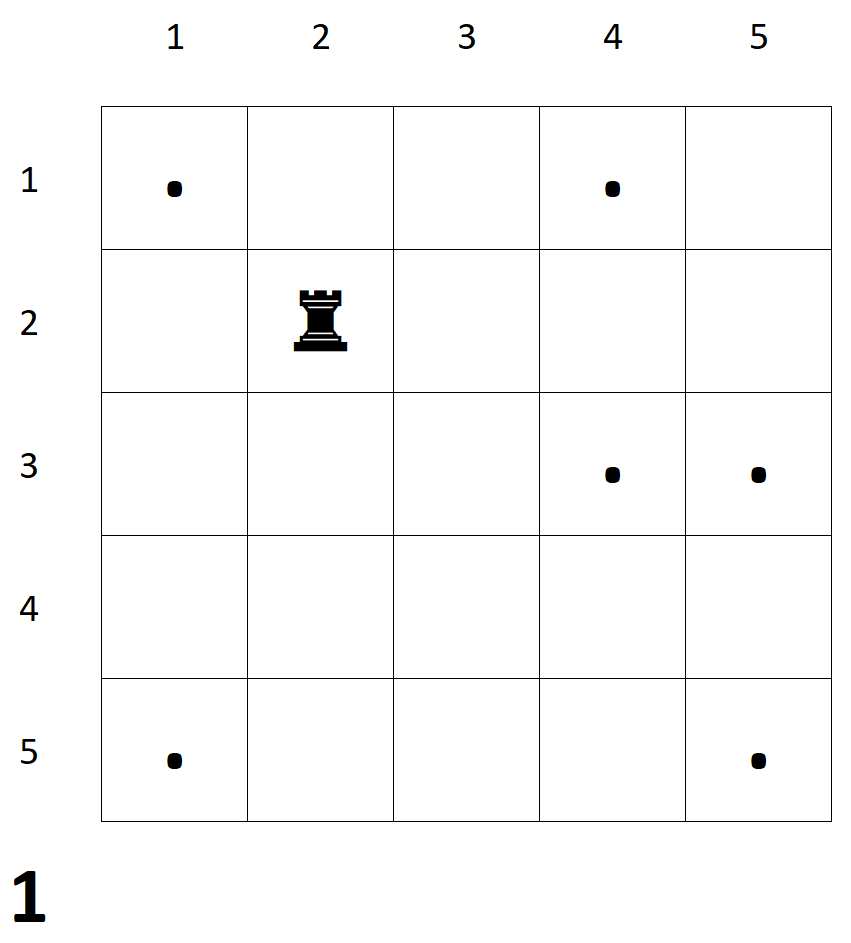}}
\end{tabular}
\caption{}\label{fig2_1}
\end{figure}

\begin{defi}
Let $P$ be an empty PLSC of order $n$, where each cell of $P$ has a dot and let $T_0$ be a compact brick with potential $p_0$ of size $a\times b\times c$, where $a,b,c \leq n$. \emph{Embedding} of $T_0$ in $P$ means that we put $T_0$ into $P$ and in each file of $T_0$ that has rooks or dots we eliminate the dots outside $T_0$ and replace $n^2 - p_0$ dots by rooks outside $T_0$ in such a way that the rooks are non-attacking. The compact brick $T_0$ is \emph{embeddable}  if such a set of rooks exists. This set of rooks is called a \emph{full extension} of $T_0$.
\end{defi}

For example, the compact brick $T_0$ can be the closure hull of the dot structure of \emph{GW6} and the rooks (symbols in the white cells) can be considered as a full extension of the brick $T_0$. $T_0$ is the yellow brick in the Figure~\ref{fig2_2}, its height is 6. The potential of $T_0$ is 12. Outside $T_0$ there are no rooks in the files containing dots in $T_0$ and each file has one rook that has no dots.

\begin{figure}[htb]
\centering
\includegraphics [scale=0.5] {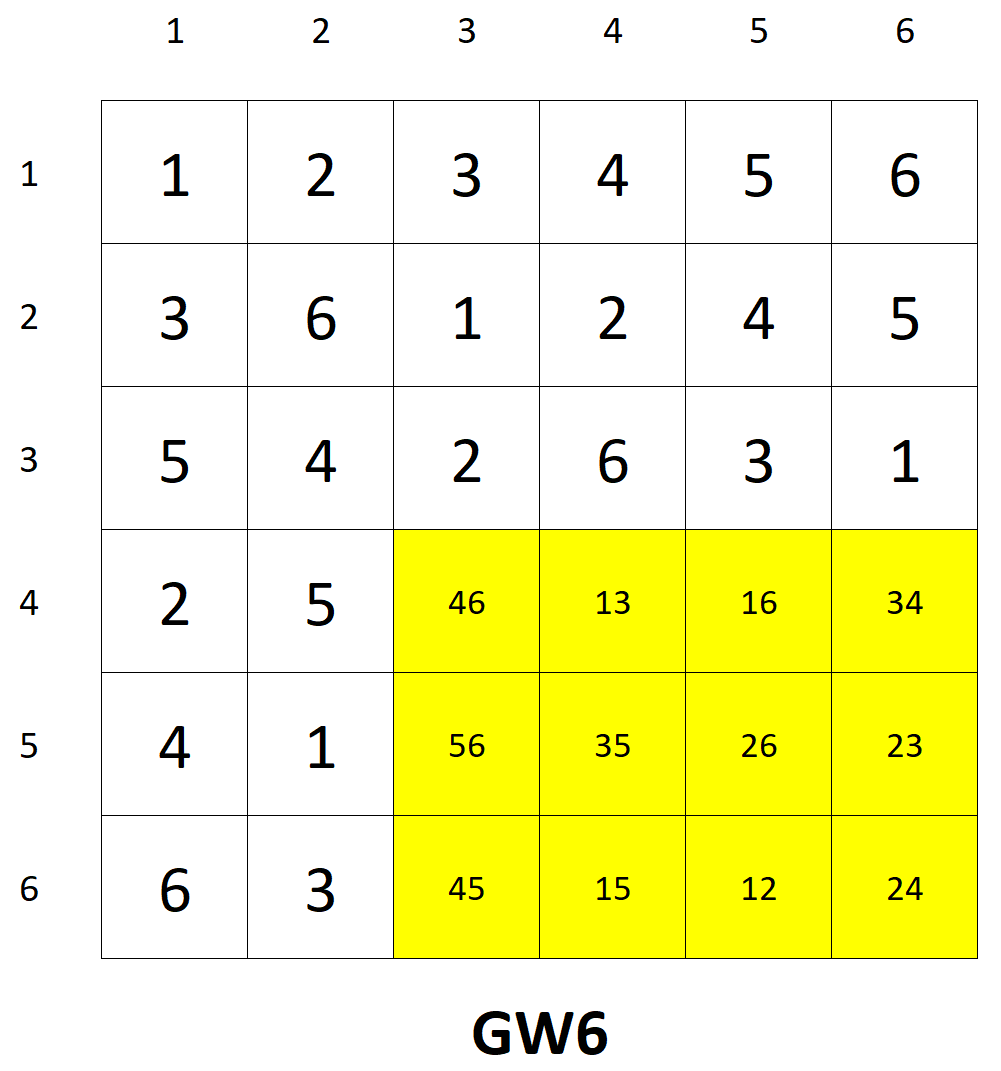}
\caption{}\label{fig2_2}
\end{figure}

\begin{defi} 
A compact brick $T_0$ with potential $p_0$ is called \emph{solvable} or it has a \emph{solution} if $p_0$ dots of $T_0$ can be replaced by rooks in such a way, that the rooks are non-attacking.
\end{defi}

\begin{rmrk} 
Obviously, a solution of $T_0$ and a full extension of $T_0$ combined form a Latin square. 
\end{rmrk}

\begin{rmrk} 
After placing $T_0$ into an empty PLSC and eliminating the proper dots outside $T_0$, the compact brick $T_0$ is “independent of the environment”, i.e., the structure of dots inside $T_0$ has the information whether $T_0$ is solvable or not. It does not have any impact on $T_0$ if you replace a dot with a rook outside $T_0$.
\end{rmrk}

Let $T_0$ be a compact brick of size $a\times b\times c$ of a PLSC of order $n$  with potential $p_0$ and let $N_x(i,T_0)$, $N_y(j,T_0)$ and $N_z(\sigma,T_0)$ denote the potentials of the layers of $T_0$, where $i \in \{1,2,\ldots ,a\}$, $j \in \{1,2,\ldots ,b\}$ and $\sigma \in \{1,2,\ldots,c\}$. 
From the definition of the potential of $T_0$ we have
\[
\sum_{i=1}^aN_x(i,T_0) = \sum_{j=1}^bN_y(j,T_0) = \sum_{\sigma=1}^cN_z(\sigma,T_0) = p_0
\]

\begin{defi} 
The compact brick $T_0$ of size $a\times b\times c$ with potential $p_0$ satisfies the Cruse’s condition system if the followings hold for a given $n$.
\begin{enumerate}
\item 
$N_x(i,T_0) \geq b+c-n$ 	for all $1 \leq i \leq a$
\item 
$N_y(j,T_0) \geq a+c-n$ 	for all $1 \leq j \leq b$
\item 
$N_z(\sigma,T_0) \geq a+b-n$ 	for all $1 \leq \sigma \leq c$
\item 
$p_0 \leq \capa(n,a,b,c)$ 
\end{enumerate}
\end{defi}

\begin{theo} 
Let $T_0$ be a compact brick of size $a\times b\times c$ with potential $p_0$, put $T_0$ into an empty PLSC of order $n$, where $a,b,c \leq n$. If $T_0$ satisfies the Cruse’s condition system, then $T_0$ is embeddable, in other words, there exists a full extension of $T_0$ in the PLSC given.
\end{theo}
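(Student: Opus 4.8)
The plan is to run the proof of Cruse's theorem (Theorem~\ref{theo1.4}) essentially verbatim, with the \emph{potential} of a layer playing the bookkeeping role that the number of rooks of a layer played there, and with one genuinely new twist at the end because $T_0$ now carries \emph{dots} rather than rooks, so Ryser's theorem cannot be quoted for a filled Latin rectangle.

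First I would rotate $T_0$ so that $a\ge b\ge c$ and set $k=n-c$, and form the analogue of the cover sheet: the $(0,1)$-matrix $M$ on the yellow $a\times b$ rectangle whose $(i,j)$ entry is $1$ exactly when the symbol-direction file of $T_0$ through $(i,j)$ carries neither a rook nor a dot. Because every layer of $T_0$ is \emph{possible}, row $i$ of $M$ has exactly $b-N_x(i,T_0)$ ones and column $j$ exactly $a-N_y(j,T_0)$ ones; combining this with $N_x(i,T_0),N_y(j,T_0)\le c$ and with the first two conditions of the Cruse system, each row of $M$ has between $b+k-n$ and $k$ ones and each column between $a+k-n$ and $k$ ones. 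Writing $a_0=ab-p_0$ for the number of ones of $M$ (using $\sum_iN_x(i,T_0)=p_0$), the capacity condition $p_0\le\capa(n,a,b,c)$ becomes $(a+b-n)k\le a_0$ and the third condition, summed over the $c$ symbol layers, becomes $a_0\le ab-(n-k)(a+b-n)$; these are exactly hypotheses 1) and 2) of Theorem~\ref{theo1.6}. Hence Theorem~\ref{theo1.6} and then Ryser's combinatorial theorem (Theorem~\ref{theo1.1}) enlarge $M$ to an $n\times n$ $(0,1)$-matrix $M'$ with exactly $k$ ones in each row and column. Decomposing $M'=L_{c+1}+\dots+L_n$ into permutation matrices by Kőnig's theorem and placing a rook at $(i,j,\sigma)$ whenever $(L_\sigma)_{ij}=1$, one gets a set of non-attacking rooks (since $\sum_{\sigma>c}(L_\sigma)_{ij}=M'_{ij}\le1$) that lies entirely in the symbol layers $c+1,\dots,n$, hence outside $T_0$; a rook sits in the file of $(i,j)$ exactly where $M'_{ij}=1$, i.e.\ only where that file of $T_0$ is empty, so no rook of $T_0$ is attacked and no cell whose dot was eliminated is used. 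This yields the $n^2$-brick $T_{n-c}$ of size $n\times n\times(n-c)$, of which the part over the yellow rectangle is the green brick $T_1$ of size $a\times b\times(n-c)$.

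What remains is to place the extension rooks in the symbol layers $1,\dots,c$, and this is where I expect the real work. In symbol layer $\sigma\le c$, let $R_\sigma$ and $C_\sigma$ be its $N_z(\sigma,T_0)$ busy rows and busy columns; the dot-elimination rule forces every extension rook of this layer to lie on a row outside $R_\sigma$ and a column outside $C_\sigma$ and, being outside $T_0$, to have row index $>a$ or column index $>b$. So in this layer I must choose a partial permutation matrix on the $n-N_z(\sigma,T_0)$ rows of $[n]\setminus R_\sigma$ and columns of $[n]\setminus C_\sigma$ that avoids the sub-rectangle $([a]\setminus R_\sigma)\times([b]\setminus C_\sigma)$ of $T_0$; such a matching exists by Hall's theorem precisely because the third Cruse condition $N_z(\sigma,T_0)\ge a+b-n$ gives $a-N_z(\sigma,T_0)\le n-b$, so each of the $a-N_z(\sigma,T_0)$ forbidden rows can still be matched into one of the $n-b$ columns past the $b$-th --- the same pigeonhole estimate that drives Theorem~\ref{theo1.6}. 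The point to settle is that the $c$ choices can be made simultaneously compatible with one another and with $T_{n-c}$, i.e.\ so that no file of $P$ receives two rooks; I would organise this as in Cruse's proof, reading the $c$ symbol layers off a Kőnig decomposition of the complement of $M'$ (the matrix marking where $M'$ vanishes, which has exactly $c$ ones in each row and column), and verifying, via the potential identities of the compact brick, that the row, column and layer balances close up. Counting, $T_{n-c}$ supplies $n(n-c)$ rooks and the lower $c$ symbol layers supply $\sum_{i\le a}(c-N_x(i,T_0))+(n-a)c=nc-p_0$ further rooks, for a total of $n^2-p_0$ non-attacking rooks, all outside $T_0$ and on surviving dots: a full extension of $T_0$. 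The cells left free inside $T_0$ are then exactly those of the solvability problem for $T_0$, matching the ``independence of the environment'' remark.
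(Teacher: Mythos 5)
Your first half is exactly the paper's argument: form the cover sheet of $T_0$ by marking the non-busy symbol-direction files, check the hypotheses of Theorem~\ref{theo1.6} from the Cruse condition system (your identities $a_0=ab-p_0$ and $(a+b-n)k\le a_0\le ab-(n-k)(a+b-n)$ are the right ones), then pass through Ryser's combinatorial theorem and Kőnig's theorem to fill the symbol layers $c+1,\dots,n$. The gap is in the symbol layers $1,\dots,c$; you have correctly located it but not closed it. A per-layer Hall matching gives each layer $\sigma$ a system of $n-N_z(\sigma,T_0)$ rooks avoiding the busy rows and columns of that layer and the rectangle $[a]\times[b]$, but nothing in your argument forces the $c$ layers to place, in aggregate, at most one rook in each symbol-direction file; and your proposed repair --- reading the layers off a Kőnig decomposition $\overline{M'}=L_1+\dots+L_c$ of the complement of $M'$ --- does not work as stated, because each $L_\sigma$ is a full permutation matrix: it necessarily puts ones in the busy rows and busy columns of layer $\sigma$ and at busy positions inside $[a]\times[b]$, exactly where an extension rook of that layer is forbidden, and there is no reason the offending ones of $L_\sigma$ can be discarded while still covering every non-busy row and column of the layer exactly once.

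The paper closes this step differently, and more cheaply than you expect. It observes that the brick $(T_0\cup T_1)$ of size $a\times b\times n$ (with $T_1$ the part of your $T_{n-c}$ over the rectangle), with every busy file of $T_0$ treated as if it carried a rook, satisfies the hypothesis of Ryser's theorem: each symbol layer $\sigma\le c$ has $N_z(\sigma,T_0)\ge a+b-n$ busy files by the third Cruse condition, and each layer $\sigma>c$ has at least $a+b-n$ rooks because $T_1$ sits inside a completed $n^2$-brick. Your worry that Ryser's theorem ``cannot be quoted for a filled Latin rectangle'' is the right instinct but the wrong conclusion: the cover-sheet proof of Ryser's theorem never uses which symbol occupies a cell, only the positions and layer counts of the busy files, and compactness (every layer possible, the three potential sums equal) supplies precisely the row/column consistency that being a Latin rectangle supplied before. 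Running that proof on $(T_0\cup T_1)$ produces in one stroke the rooks of layers $1,\dots,c$ outside the rectangle together with the compatibility you left unsettled. If you insist on your more hands-on route, you must prove a simultaneous-matching statement for the $c$ layers, which is exactly the content of the theorem you declined to use.
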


\begin{proof}
When you compose the cover sheet of the compact brick $T_0$, you need to treat the files containing rooks or dots in $T_0$ as if they would have a rook inside $T_0$. Then the proof of the Theorem~\ref{theo1.4} also proves this statement.
\end{proof}

It is easy to check, that the closure hull of the dot structure of \emph{GW6}, i.e., the yellow brick in the Figure~\ref{fig2_2}, satisfies the Cruse’s condition system.

Cruse~\cite{[3]} proved that the capacity condition holds for the Cruse’s square and from Corollary~\ref{cor4.21} we know that \emph{GW6} also satisfies the capacity condition, ergo

\begin{cor}
For $n \geq 6$ there exists a PLSC $P$ of order $n$ that contains the closure hull of the dot structure of GW6 in an embedded way and thus satisfies the capacity condition, but nevertheless $P$ is not completable. 
\end{cor}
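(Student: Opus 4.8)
The plan is to apply the embedding theorem just proved to the compact brick arising from \emph{GW6} and then to exploit the gap between being \emph{embeddable} and being \emph{solvable}: the embedding theorem delivers only the former, whereas completability of the ambient PLSC requires the latter. Write $T_0$ for the closure hull of the dot structure of \emph{GW6}, a compact brick of size $a\times b\times c$ with $a,b,c\le 6$ and potential $p_0=12$ (the yellow brick of Figure~\ref{fig2_2}). Since \emph{GW6} has order $6$, we have $a,b,c\le 6\le n$ for every $n\ge 6$, so the dimension hypothesis of the embedding theorem holds. The remaining hypothesis, that $T_0$ satisfies the Cruse's condition system for the given $n$, is the verification already announced above as ``easy to check'': the layer conditions are met because the potentials $N_x(i,T_0)$, $N_y(j,T_0)$, $N_z(\sigma,T_0)$ dominate the bounds $b+c-n$, $a+c-n$, $a+b-n$ (which are at most $12-n$, hence nonpositive once $n\ge 12$), and condition~4), namely $p_0\le\capa(n,a,b,c)$, is the capacity condition, which is a finite direct check — at $n=6$ it is exactly the content of Cruse~\cite{[3]} together with Corollary~\ref{cor4.21}, and it only gets slacker as $n$ grows.

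So I would invoke the embedding theorem: $T_0$ is embeddable in an empty PLSC of order $n$, i.e.\ there is a PLSC $P$ of order $n$ obtained by placing $T_0$ at the origin and adjoining a full extension of $T_0$ outside it. After this embedding the only dots of $P$ lie inside $T_0$, so the closure hull of the dot structure of $P$ equals $T_0$; in particular $P$ contains the closure hull of the dot structure of \emph{GW6} in an embedded way, and condition~4) of the Cruse's condition system for $T_0$ is precisely the statement that $P$ satisfies the capacity condition. This disposes of every part of the claim except non-completability.

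For non-completability I would argue along the chain ``$P$ completable $\iff$ $T_0$ solvable $\iff$ \emph{GW6} completable''. The first equivalence is the remark on independence of the environment: the dot structure inside $T_0$ already carries the information whether $T_0$ has a solution, and replacing dots by rooks in the full extension outside $T_0$ cannot affect it — concretely, a completion of $P$ turns the remaining dots, all of which lie inside $T_0$, into $p_0$ non-attacking rooks, i.e.\ a solution of $T_0$, and conversely a solution of $T_0$ together with the full extension is a Latin square. The second equivalence uses that \emph{GW6} itself decomposes as $T_0$ together with a full extension formed by its prefilled rooks, so a completion of \emph{GW6} is a solution of $T_0$ and, by the remark on solutions, a solution of $T_0$ together with those prefilled rooks is a Latin square completing \emph{GW6}. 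Since \emph{GW6} is not completable~\cite{[4]}, $T_0$ is not solvable, hence $P$ is not completable; carrying this out for every $n\ge 6$ produces the asserted family.

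The step I expect to carry the real content — as opposed to routine bookkeeping — is this equivalence chain, and within it the point that the closure-hull/full-extension decomposition of \emph{GW6} is genuinely of the type produced by the embedding construction, so that the ``independence of the environment'' remark applies to $P$ verbatim, and that the passage from order $6$ to order $n$ introduces no new dot inside $T_0$ that could accidentally make it solvable. By contrast the dimension bound and the inheritance of the capacity condition are immediate once $T_0$ has been fixed.
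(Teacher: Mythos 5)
Your overall route is the same as the paper's (which compresses the whole argument into an ``ergo''): check the Cruse condition system for the closure hull $T_0$ of the dot structure of \emph{GW6}, invoke the embedding theorem to get a full extension for every $n\ge 6$, and then use the ``independence of the environment'' remark together with the non-completability of \emph{GW6} to conclude that the resulting PLSC $P$ is not completable. That part of your write-up is sound and, if anything, more explicit than the paper about why a completion of $P$ would restrict to a solution of $T_0$ and why a solution of $T_0$ would complete \emph{GW6}.

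There is, however, one genuine gap: your claim that ``condition~4) of the Cruse's condition system for $T_0$ is precisely the statement that $P$ satisfies the capacity condition.'' It is not. Condition~4) is the single inequality $p_0\le\capa(n,a,b,c)$ for the one brick $T_0$, whereas the capacity condition for the PLSC $P$ quantifies over \emph{all} remote brick couples of $P$ (this is exactly the condition whose insufficiency the corollary is designed to exhibit, so it cannot be waved through). The paper derives it instead from Statement~\ref{statm4.19} via Corollary~\ref{cor4.21}: the dot structure of the embedded PLSC is a $2$-uniformly-distributed mCS, so replacing every rook by $1$ and every dot by $1/2$ yields a triply stochastic matrix containing the characteristic matrix of $P$, and such a matrix satisfies the capacity inequality for every RBC simultaneously. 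You need this (or an equivalent global argument) to justify the phrase ``thus satisfies the capacity condition''; note also that Corollary~\ref{cor4.21} as stated concerns \emph{GW6} of order $6$, so for $n>6$ you must observe that the embedding leaves the dot structure inside $T_0$ unchanged and introduces no new dots, so Statement~\ref{statm4.19} still applies to the order-$n$ PLSC. The rest of your argument stands.
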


Figure~\ref{fig2_3} shows examples for $n = 7$, $n = 8$ and $n = 9$.

\begin{figure}[htb]
\centering
\hbox to \textwidth {\includegraphics[scale=.3]{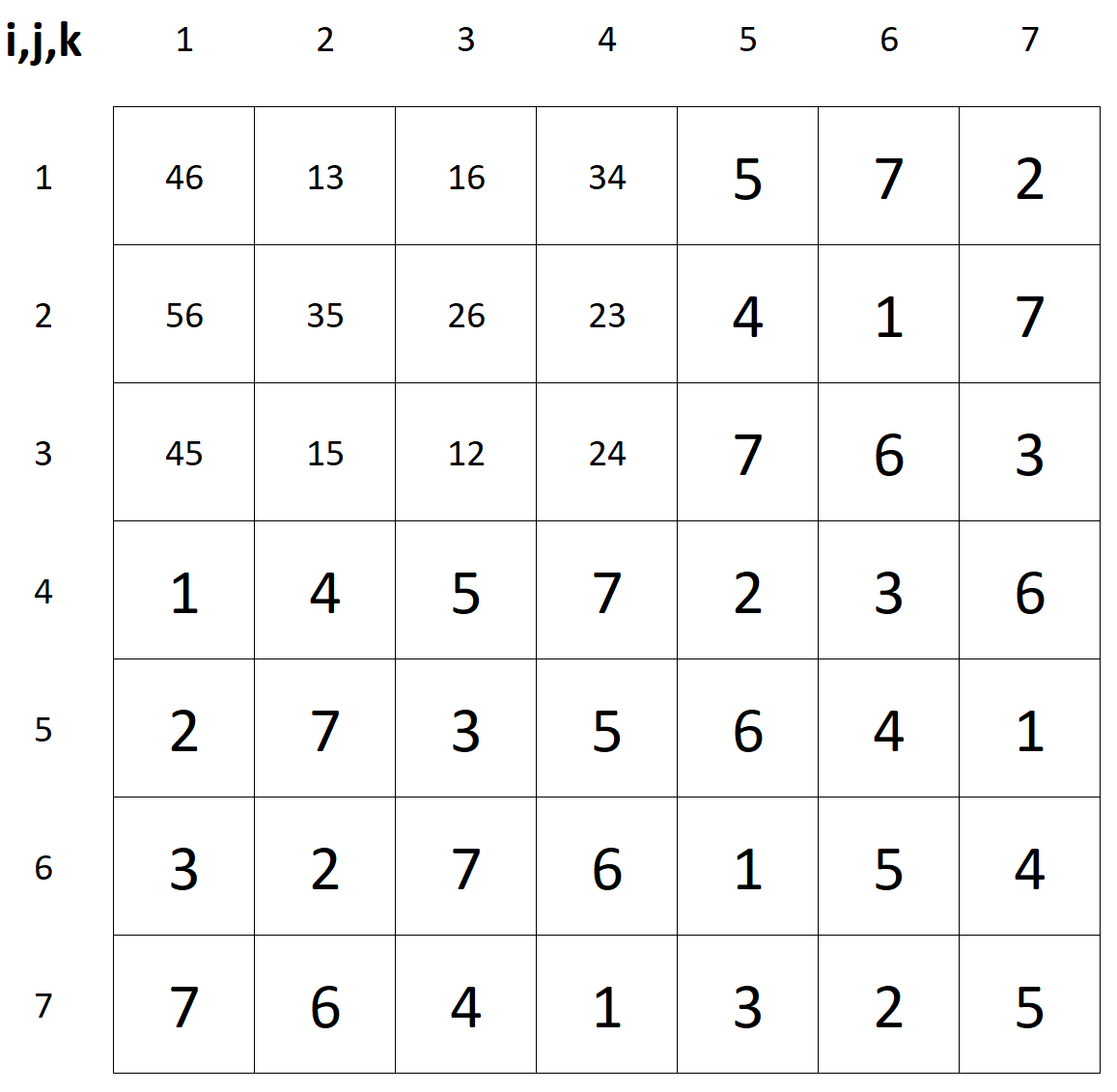}\hfill\includegraphics[scale=.3]{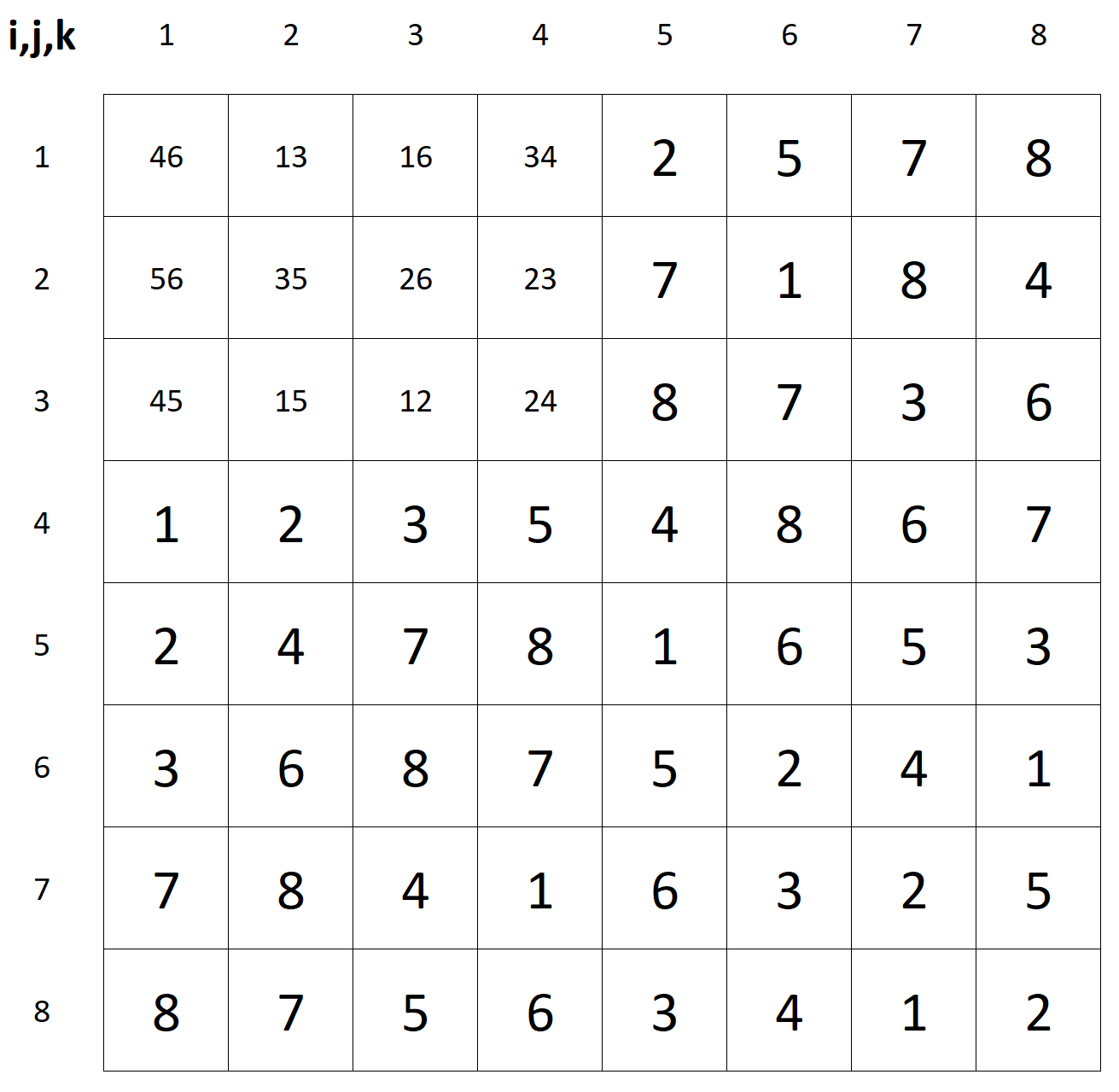}
\hfill\includegraphics[scale=.3]{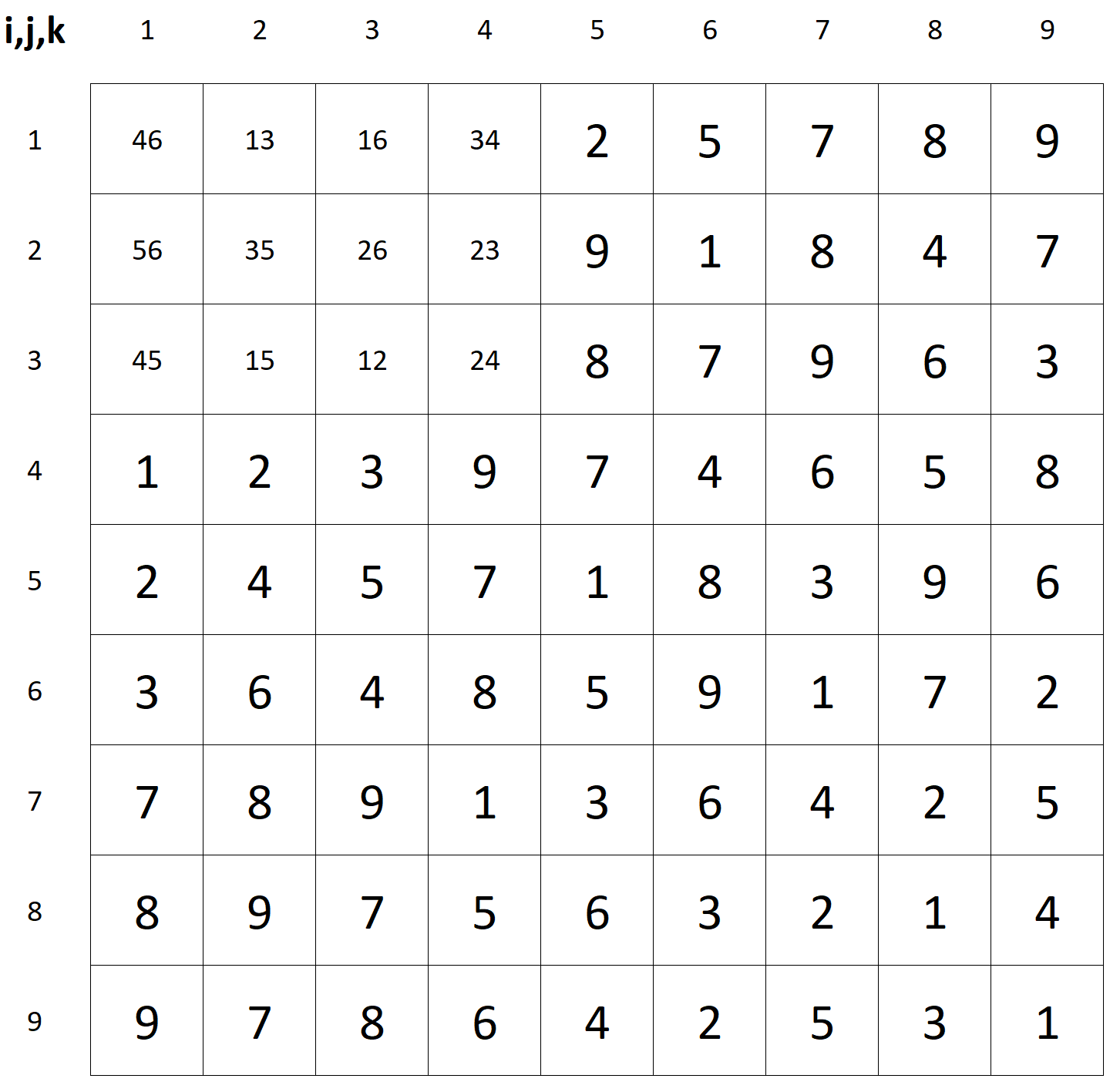}}
\caption{}\label{fig2_3}
\end{figure}

\begin{rmrk}
It is easy to prove that for each compact brick $T_0$ there exists $N$, for example $N = diam(T_0)$, such that $T_0$ satisfies the Cruse's condition system for all $n > N$.
\end{rmrk}

\FloatBarrier

\section{Elimination, BUGs}
The terms PLS and the corresponding PLSC, candidate and the corresponding dot, symbol and the corresponding rook are used in parallel, but the context should always make it clear which one we are thinking of. The examples normally are in PLS form, but they are always a projection of the current PLSC. If it improves clarity, we show all three relevant projections of the PLSC (primary conjugates). We always assume that the automatic elimination has already been performed, hence, the files containing rooks do not contain dots.

\begin{defi} 
Let P be a PLSC. A dot of P is called \emph{desolate} if there exists a file containing only the given dot. The cell is called \emph{desolate} if it has a desolate dot. To \emph{treat} a desolate cell means to replace the dot in the given cell with a rook and to eliminate the dots that are in the Hamming sphere of radius 1 whose center is the given desolate cell.
\end{defi}

\goodbreak
Let P be a PLSC. 
\begin{algo} [Primary Extension] \label{algo3.2}
\[\]
\begin{enumerate} [{Step }1{:}]
\item 
Let $j = 0$ and $P_0 = P$
\item 
If each file of $P_j$ contains one rook, then stop ($P_j$ is completed)
\item 
If there is an eliminated file in $P_j$, then stop ($P_j$ is not completable)
\item 
If there are no desolate cells, then stop
\item 
Increment $j$ by one, arbitrarily select a desolate cell, denote it by $D_j$ and replace the dot in this cell by a rook
\item 
Perform automatic elimination, denote the result by $P_j$, and continue from Step 2
\end{enumerate}
\end{algo}
We eliminate dots if and only if there is a rook in one of the files containing the given dot, and we replace a dot with a rook if and only if the dot is desolate. If we stopped because there is an eliminated file, then $P$ is not completable, and we do not define primary extensions for such a PLSC. In other cases, the result of the procedure will be denoted by $P^*$ ($P^*=P_0=P$ if $P$ has no desolate dots at all). $P^*$ is either an LSC or $P^*$ has at least two dots in each file that contains no rooks. It is obvious that $P \subseteq P^*$. We prove, that $P^*$ is unique, so the PLSC $P^*$ can be considered \textbf{the} primary extension of $P$.

Suppose that there is at least one desolate dot in $P$. Let $A_1$ and $A_2$ be two runs of the procedure~\ref{algo3.2}, and the result of $A_1$ is $R_1$, the result of $A_2$ is $R_2$. The run $A_1$ defines a cell sequence $D_1,D_2,\ldots ,D_k$ and the run $A_2$ defines a cell sequence $E_1,E_2,\ldots ,E_m$. The procedure starts with a desolate cell, so the cells $D_1$ and $E_1$ are desolate. Both sequences contain all the cells in that order, in which the procedure treated them. When the procedure is completed, the treated cells contain non-attacking rooks. 
\begin{cor} \label{cor3.3}
The Hamming distance of any two cells of the sequence $D_1,D_2,\ldots,D_k$ or of the sequence $E_1,E_2,\ldots,E_m$ is at least 2.
\end{cor}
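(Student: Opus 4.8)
The plan is to argue by contradiction, exploiting the fact that Algorithm~\ref{algo3.2} only ever deletes dots or converts a dot into a rook, so the set of cells carrying a dot shrinks monotonically as the step index increases, and once a cell holds a rook it keeps it forever. Fix indices $i<j$ and suppose the cells $D_i$ and $D_j$ of the sequence produced by run $A_1$ have Hamming distance at most $1$.

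First I would dispose of the case of distance $0$, i.e.\ $D_i=D_j$. At Step 5 of $A_1$ the dot of $D_i$ was replaced by a rook, and rooks are never removed, so $D_i$ carries a rook in $P_\ell$ for every $\ell\ge i$, in particular in $P_{j-1}$. Since automatic elimination is always assumed to have been performed, a file containing a rook contains no dot; hence $D_j=D_i$ cannot carry a dot in $P_{j-1}$, contradicting that $D_j$ is desolate in $P_{j-1}$ (a desolate cell carries a dot by definition).

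Next, the case of distance $1$: then $D_i$ and $D_j$ agree in two coordinates and differ in the third, so they lie on a common file $f$. When Step 5 of $A_1$ treats $D_i$, the file $f$ contains a rook, so the automatic elimination performed at Step 6 removes every dot of $f$, in particular any dot in the cell $D_j$; thus $D_j$ carries no dot in $P_i$. Because every later step only removes dots, $D_j$ still carries no dot in $P_{j-1}$, again contradicting that $D_j$ is desolate at that stage. Therefore $d(D_i,D_j)\ge 2$ for all $i<j$, and the identical argument applied to $A_2$ gives the same conclusion for $E_1,\dots,E_m$.

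I do not foresee a genuine obstacle: the only point needing an explicit word is the monotonicity of the dot structure — once a dot disappears it never reappears, and once a cell receives a rook it keeps it — which is immediate from the description of the procedure, whose sole modifications are eliminations of dots and replacements of a desolate dot by a rook.
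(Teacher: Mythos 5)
Your proof is correct and follows essentially the same reasoning as the paper, which states the corollary as an immediate consequence of the observation that the treated cells end up containing non-attacking rooks; your case analysis (distance $0$ versus distance $1$) simply makes explicit why a later desolate cell cannot share a file with an earlier rook, namely because automatic elimination has already removed every dot from such a file. No gap.
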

\begin{statm}
The runs $A_1$ and $A_2$ produce the same result in the following sense. If $R_1$ contains an eliminated file, then $R_2$ also contains an eliminated file. If $R_1$ is an LSC (completed PLSC), then $R_2$ also is an LSC and $R_1 = R_2$. Otherwise, $R_1 = R_2$, including both the rook and dot structures.
\end{statm}
\begin{proof} 
Let $S_1(D_j)$ denote the Hamming sphere of radius 1 whose center is $D_j$, where $j \in {1,2,\dots ,k}$. Let $F(j) = S_1(D_1) \cup S_1(D_2) \cup \ldots \cup S_1(D_j)$, where $j \in \{1,2,\ldots ,k\}$. If we arbitrarily pick a cell from the sequence $D_1,D_2,\ldots ,D_k$ in $P$ (prior to any run of the procedure) and treat it then we eliminate all dots in $S_1(D_j)$. Due to Corollary~\ref{cor3.3} and because of $S_0(D_j) = \{D_j\}$, we do not eliminate dots in any cell of the sequence $D_1,D_2,\ldots ,D_k$. Therefore, the elimination in the set $F(j)$ can be done in any order for any $j \in \{1,2, \ldots ,k\}$. If the result of the run $A_1$ is that there is an eliminated file, then the result of the run $A_2$ is that there is an eliminated file. Otherwise, run $A_2$ stops because there are no more desolate dots. If a desolate dot is eliminated, an eliminated file is created. So, cell $D_1$ has not been eliminated, but then cell $D_1$ occurs in run $A_2$ and is treated, so cell $D_2$ becomes desolate. If it has been eliminated, then there is an eliminated file if not, then it is treated and cell $D_3$ becomes desolate, etc... If all cells in the sequence $D_1,D_2,\ldots ,D_k$ have been treated, then we know from the run $A_1$, that there is an eliminated file. So, the run $A_2$ finds eliminated file, which is a contradiction. 
The roles of runs $A_1$ and $A_2$ are symmetric, so if run $A_1$ did not stop because of an eliminated file, then run $A_2$ do not stop because of an eliminated file. Suppose that neither $A_1$ nor $A_2$ found an eliminated file. Then we do not eliminate the dot in cell $E_1$ in run $A_1$, otherwise an eliminated file is created. This means, that cell $E_1$ appears in the cell sequence of run $A_1$ at the latest when there is no other desolate cell. Let $D_{j_1} = E_1$. Similarly, we do not eliminate the dot in cell $E_2$ in run $A_1$, otherwise an eliminated file is created. So, cell $E_2$ appears in the cell sequence of run $A_1$ at the latest when the cell $D_{j_1}=E_1$ is treated, because we know from the run $A_2$ that $E_2$ is desolate if $E_1$ is treated. Let $D_{j_2} = E_2$. We do not eliminate the dot in cell $E_3$ in run $A_1$, otherwise an eliminated file is created. So, cell $E_3$ appears in the cell sequence of run $A_1$ at the latest when the cell $D_{j_1}=E_1$ and $D_{j_2}=E_2$ are treated, because we know from the run $A_2$ that $E_3$ is desolate if $E_1$ and $E_2$ are treated in any order. And so on. Consequently, any cells treated in run $A_2$ occurs in run $A_1$. The roles of $A_1$ and $A_2$ are symmetric, so two runs result in the same rook structure, which also implies the same dot structure due to elimination of the dots in cells of $F(k)$. Ergo, $P^*$ is unique.
\end{proof}
\begin{cor} 
An LSC $Q$ is a completion of $P$ if and only if it is a completion of $P^*$. Thus, if $P^*$ is an LSC, then $P$ has exactly one completion, namely $P^*$.
\end{cor}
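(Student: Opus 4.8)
The plan is to reduce the claim to a single step of Algorithm~\ref{algo3.2} and then iterate. I would fix one run of the Primary Extension, which produces a finite sequence of PLSCs $P = P_0, P_1, \ldots, P_k = P^*$, where $P_j$ arises from $P_{j-1}$ by treating the desolate cell $D_j$, and prove the single-step assertion: \emph{for every LSC $Q$, $Q$ is a completion of $P_{j-1}$ if and only if $Q$ is a completion of $P_j$}. The run is finite (each step adds a rook, and an LSC of order $n$ has only $n^2$ of them), and $P^*$ is independent of the chosen run by the preceding Statement, so chaining the equivalences over $j = 1,\ldots,k$ gives the first sentence of the corollary. Throughout I use that a completion of a PLSC is an LSC containing every rook of the PLSC whose rooks occupy only cells carrying a rook or a dot of the PLSC (i.e.\ $Q$ respects both the fixed rooks and the candidate sets), together with the defining property of an LSC that every file contains exactly one rook.

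For the single step, recall that treating $D_j$ replaces the dot in $D_j$ by a rook and deletes exactly the dots in $S_1(D_j)\setminus\{D_j\}$ (the new rook meets no file outside $S_1(D_j)$). The implication ``$Q$ completes $P_j$ $\Rightarrow$ $Q$ completes $P_{j-1}$'' is immediate: the rooks of $P_{j-1}$ form a subset of those of $P_j$, hence of those of $Q$, and every rook of $Q$ sits on a rook or dot of $P_j$, hence on a rook or dot of $P_{j-1}$ since $D_j$ was a dot of $P_{j-1}$ and deleting dots only shrinks candidate sets. For the converse, let $Q$ complete $P_{j-1}$. Since $D_j$ is desolate in $P_{j-1}$, some file $f$ contains no rook of $P_{j-1}$ and exactly one dot, namely $D_j$; the unique rook of $Q$ lying in $f$ must occupy a rook-or-dot cell of $P_{j-1}$ inside $f$, so it is $D_j$. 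Thus $D_j$ is a rook of $Q$, which secures the rook-containment for $P_j$; moreover no other cell of a file through $D_j$ can be a rook of $Q$, so the dots deleted from $S_1(D_j)$ are not used by $Q$, and every rook of $Q$ still lies on a rook or dot of $P_j$. Hence $Q$ completes $P_j$.

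For the second sentence (we are in the case where $P^*$ is defined, i.e.\ no run ends with an eliminated file): if $P^*$ is an LSC, then $P^*$ is trivially a completion of itself, hence of $P$ by the first sentence, so $P$ has at least one completion. Conversely, any completion $Q$ of $P$ is a completion of $P^*$, so its rooks contain all $n^2$ rooks of $P^*$; since $Q$ itself has exactly $n^2$ rooks, $Q = P^*$. Therefore $P^*$ is the unique completion of $P$.

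I expect the only delicate point to be the converse of the single-step assertion: one must argue that the forced rook $D_j$ is genuinely forced in \emph{every} completion of $P_{j-1}$ — exactly where the one-rook-per-file property of an LSC and the definition of \emph{desolate} enter — and, symmetrically, that the dots eliminated around $D_j$ cannot host a rook of $Q$ either, so discarding them removes no completion. The rest is bookkeeping with rook and dot sets, already prepared by the proof of the Statement and by Corollary~\ref{cor3.3}.
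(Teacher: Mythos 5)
Your proof is correct and matches the paper's intent: the paper states this corollary without proof, treating it as an immediate consequence of the preceding Statement and of the fact that treating a desolate cell neither adds an impossible rook nor removes a usable dot, which is exactly the single-step equivalence you verify and chain. Your write-up simply makes explicit the bookkeeping the paper leaves implicit, so no gap and no genuinely different route.
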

Although in the primary extension some dots were replaced by rooks and some dots were eliminated, the result may still contain more dots that cannot become rooks in any extension. 
\begin{defi} 
Let P be a PLSC. A dot of $P$ is \emph{deceptive} if at least one of the next two conditions holds for the given dot:
\begin{enumerate} [1)]
\item
$(T_0,T_3)$ is a stuffed RBC of $P$ and the dot is either in $T_0$ or in $T_3$
\item
$(T_0,T_2)$ is a balanced RBC of a layer of $P$ and one brick of the RBC is perfect and the dot is in the other brick
\end{enumerate}
\end{defi}
\begin{rmrk}
If we replace a deceptive dot by a rook, then we create an overloaded RBC or an underweighted perfect brick, thus the PLSC we get is not completable. 
\end{rmrk}
\begin{rmrk}
We define the same set of deceptive dots in any order we find them, so the set of deceptive dots of a PLSC is unique, so that the deceptive dots can also be eliminated in any order.
\end{rmrk}

Let P be a PLSC. Suppose that the primary extension of $P$ exists. Denote it by $P^*$.

\goodbreak

\begin{algo}[Secondary extension] \label{algo3.7} 
\[\]
\begin{enumerate} [{Step }1{:}]
\item
Let $j = 0$ and $P_0^* = P^*$
\item
If the capacity condition does not hold for $P_j^*$, then stop.
\item
If there are no deceptive dots in the PLSC, then stop.
\item
Eliminate the deceptive dots and perform primary extension
\item
If primary extension created an eliminated file, then stop
\item
Increment $j$ by one, denote the result by $P_j^*$ and continue from Step 2
\end{enumerate}
\end{algo}
If we stopped in Step 2 or Step 5, then $P$ is not completable, and we do not define secondary extensions for such a PLSC. In other cases, let the result of the procedure is denoted by $P^{**}$ ($P^{**}=P_0^*=P^*$ if $P^*$ has no deceptive dots at all). $P^{**}$ is either an LSC or has at least two dots in each file that contains no rooks.
The primary extensions $P_j^*$ are unique and
\[
P \subseteq P_1^* \subseteq P_2^* \subseteq \ldots \subseteq P_j^* \subseteq \ldots  \subseteq P^{**}
\]
The set of deceptive dots also are unique, so $P^{**}$ is unique, thus $P^{**}$ can be considered \textbf{the} secondary extension of $P$. 

\begin{cor}
An LSC $Q$ is a completion of $P$ if and only if it is a completion of $P^{**}$. Hence, if $P^{**}$ is an LSC, then $P$ has exactly one completion, namely $P^{**}$.
\end{cor}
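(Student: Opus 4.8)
The plan is to track the set of LSCs completing the PLSC along the chain
\[
P \subseteq P_1^* \subseteq P_2^* \subseteq \cdots \subseteq P^{**},
\]
showing that passing from one term to the next changes neither the rooks that are forced nor the candidates that may still be used, and then to invoke the corollary already proved for the primary extension to collapse the whole chain. Throughout I use that an LSC $Q$ completes a PLSC $R$ precisely when every rook of $R$ is a rook of $Q$ and, in every file of $R$ containing no rook, the rook of $Q$ in that file sits on a dot of $R$.

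First I would record two elementary observations. (A) If $R'$ is obtained from a PLSC $R$ by deleting dots only, then every completion of $R'$ is a completion of $R$: the rooks coincide, the rookless files coincide, and a dot of $R'$ is a dot of $R$, so a completion of $R'$ respects $R$ as well. (B) The argument proving the primary-extension corollary above applies verbatim to any PLSC admitting a primary extension, hence $Q$ completes $R$ if and only if $Q$ completes $R^*$. Now one round of Algorithm~\ref{algo3.7} (one that does not trigger a "stop, not completable" exit, which is consistent with the hypothesis that $P^{**}$ is defined) consists of deleting the deceptive dots of $P_j^*$ and then forming the primary extension; I must show each of these two steps preserves the completion set in both directions. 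For the primary extension this is (B). For the deceptive-dot deletion, one direction is (A); for the other, suppose $Q$ completes $P_j^*$ and, toward a contradiction, that $Q$ places a rook on a deceptive dot $d=(i,j,k)$. Then $Q$ is an LSC containing the PLSC obtained from $P_j^*$ by promoting $d$ to a rook; but by the Remark on deceptive dots that PLSC is not completable, a contradiction. Hence $Q$ uses only dots that survive the deletion, i.e.\ $Q$ completes the reduced PLSC. Therefore the set of completions is unchanged by each round.

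It follows by induction along the chain, using also the primary-extension corollary for the very first step $P\rightsquigarrow P^*=P_0^*$, that $P$, $P^*$, each $P_j^*$, and $P^{**}$ all have the same set of completions; here one uses that the secondary extension terminates, since every non-terminating round deletes at least one deceptive dot and so strictly decreases the (finite, nonnegative) number of dots, whence $P^{**}=P_m^*$ for some $m$. This gives the first assertion. For the second, if $P^{**}$ is an LSC then its rook set already has $n^2$ non-attacking rooks, so the only LSC containing all of them is $P^{**}$ itself; hence $P^{**}$ is the unique completion of $P^{**}$, and by the first assertion it is the unique completion of $P$.

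The main obstacle is the bidirectional completion-preservation of the deceptive-dot elimination; everything else is bookkeeping. The "forward" direction is the easy inclusion (A), but the "backward" direction genuinely needs the Remark that promoting a deceptive dot to a rook yields a non-completable PLSC, which in turn rests on the earlier theory of stuffed and balanced RBCs and perfect bricks. The uniqueness of the set of deceptive dots and of each primary extension $P_j^*$, already established in the text, is what allows us to speak unambiguously of "the" completion set at each stage and, ultimately, of $P^{**}$.
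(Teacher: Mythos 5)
Your proposal is correct and follows essentially the route the paper intends: the paper leaves this corollary unproved, deriving it implicitly from the primary-extension corollary together with the remark that promoting a deceptive dot to a rook yields a non-completable PLSC, which is exactly the two-step (deletion plus primary extension) induction along the chain that you carry out. Your explicit handling of the backward direction of the deceptive-dot deletion and of termination merely makes the paper's implicit bookkeeping precise.
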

We apply the primary and secondary extension algorithms on the Cruse’s square and on its conjugates. The Cruse’s square has no desolate dots, so its primary extension is itself, as it can be checked in the Figure~\ref{fig3_1}.

\begin{figure}[htb]
\centering
\begin{tabular}{c}
\hbox to .95\textwidth {\includegraphics[scale=.35]{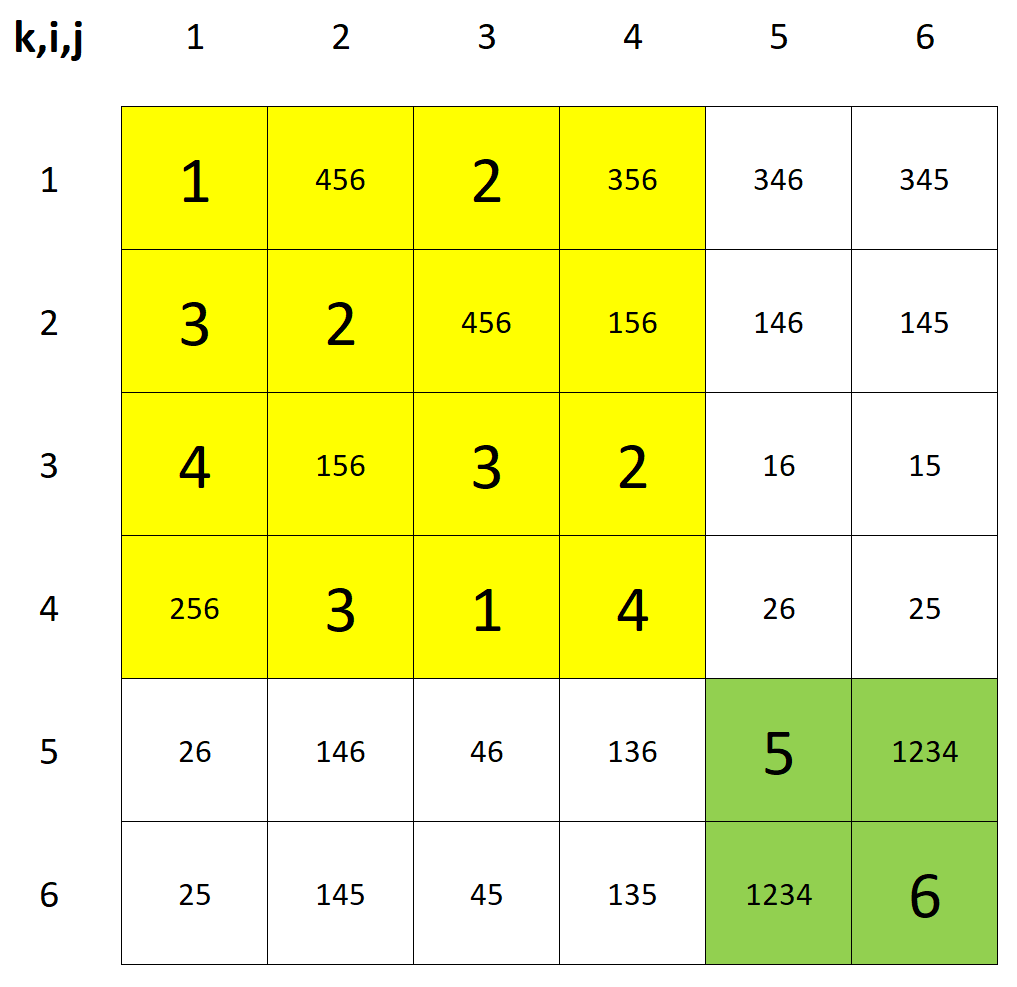}\hfill\includegraphics[scale=.35]{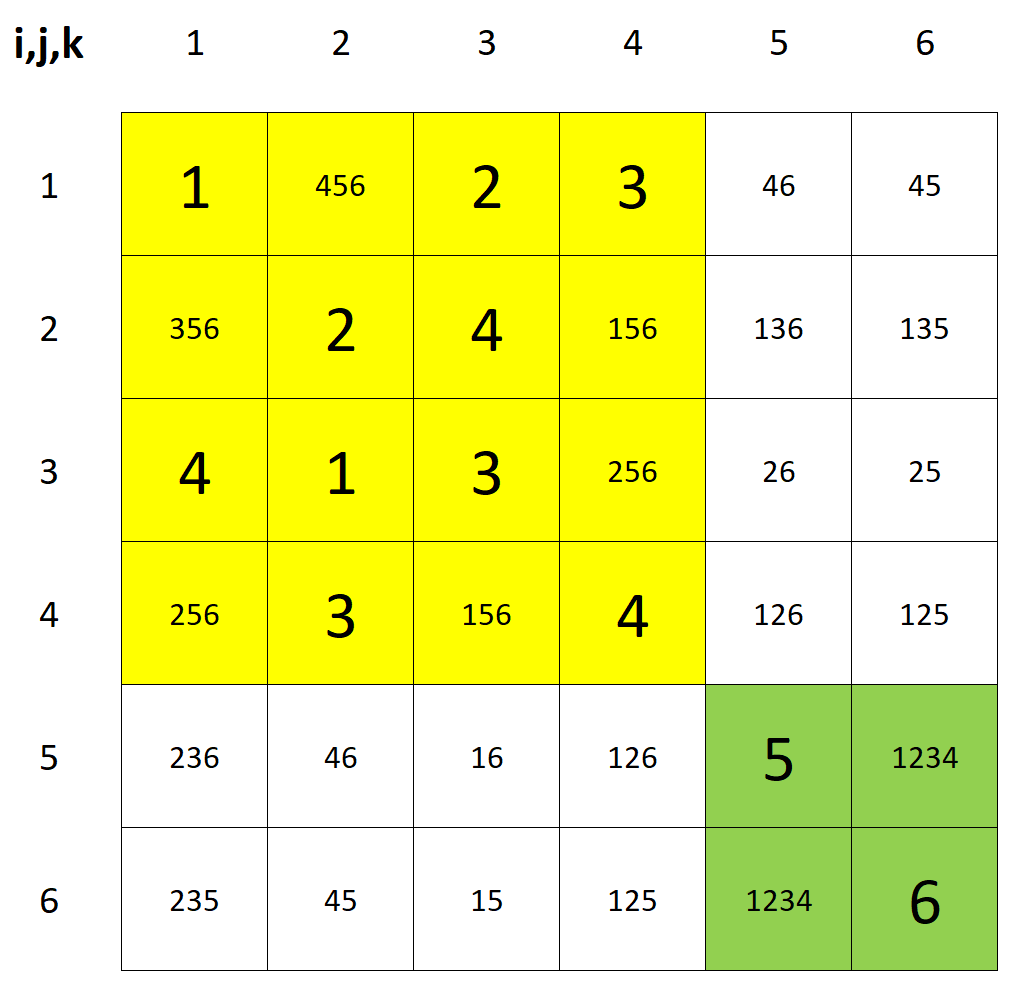}
\hfill\includegraphics[scale=.35]{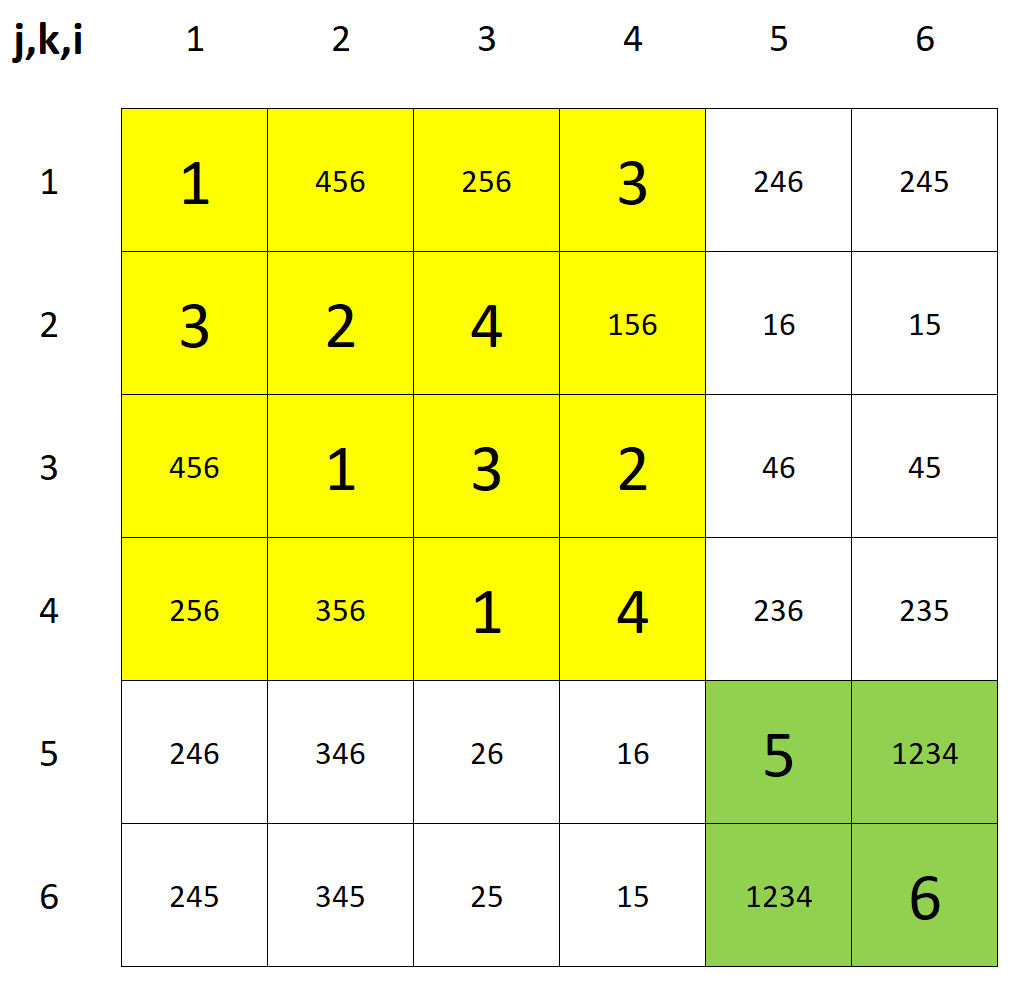}}
\end{tabular}
\caption{}\label{fig3_1}
\end{figure}

Since no new rooks are created and as we will see later in Statement~\ref{statm4.19} and Corollary~\ref{cor4.21}, the capacity condition holds for Cruse’s square, ergo the capacity condition is also satisfied after each elimination step. Let $T_0$ be the yellow brick of size $4\times 4\times 4$, let the remote mate of $T_0$ is the green brick $T_3$ of size $2\times 2\times 2$. The capacity of the RBC $(T_0,T_3)$ is $\dfrac{4\cdot 4\cdot 4+2\cdot 2\cdot2 }{6} = 12$. The RBC contains 12 rooks, so it is stuffed. Thus, we can eliminate the candidates 1,2,3 and 4 in the empty cells of the yellow brick. So, in the empty cells of the yellow brick only 5 and 6 can occur as candidates. In the same way, we can eliminate the candidates 5 and 6 in the empty cells of the green brick (in our case there are no such candidates), so only 1, 2, 3 and 4 can appear as candidates in the empty cells of the green square. The eliminated state is shown in the Figure~\ref{fig3_2}.

\begin{figure}[htb]
\centering
\begin{tabular}{c}
\hbox to .95\textwidth {\includegraphics[scale=.3]{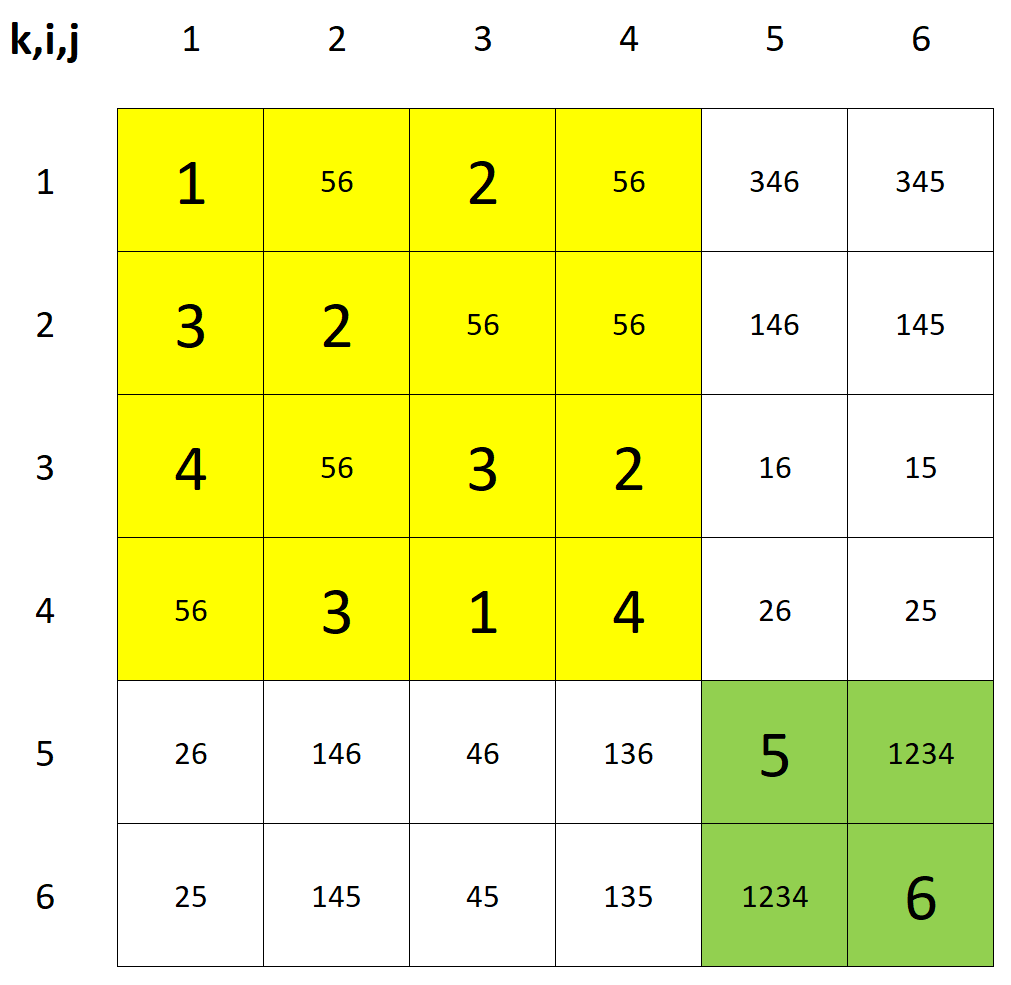}\hfill\includegraphics[scale=.3]{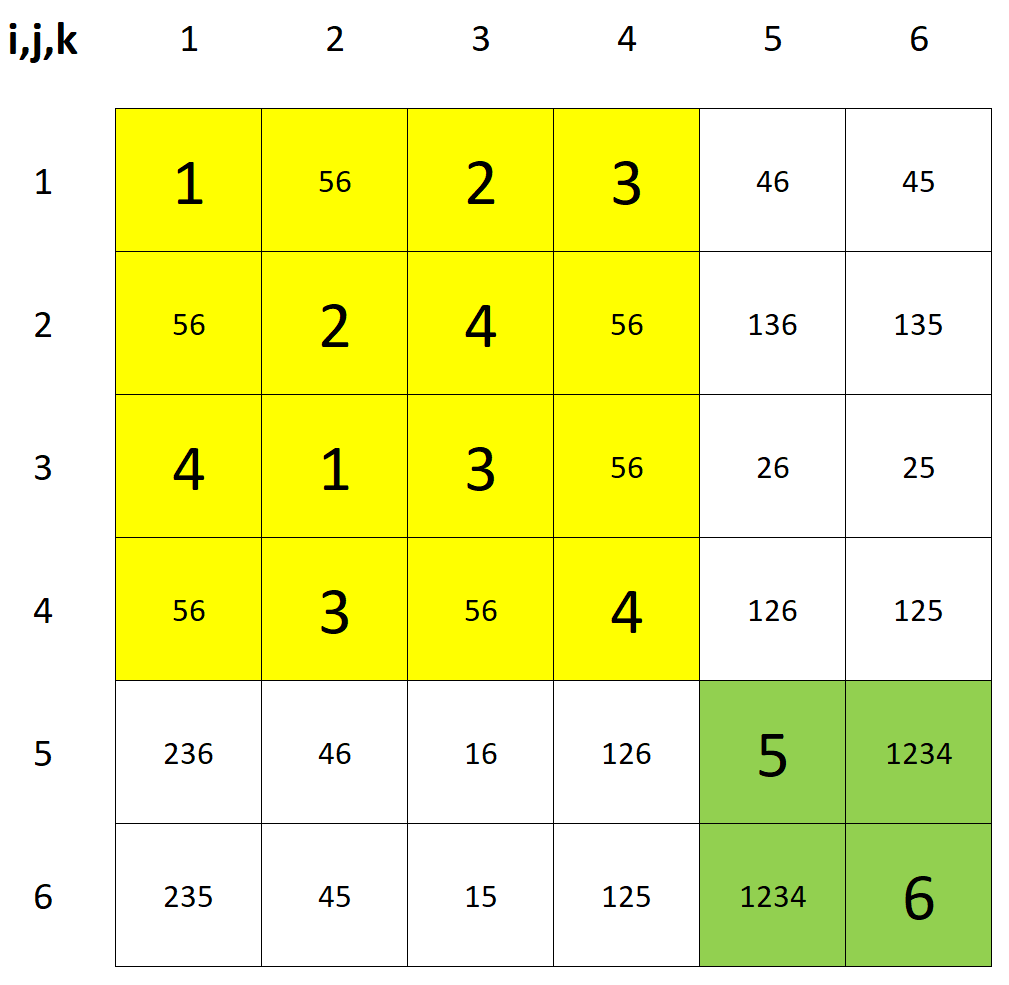}
\hfill\includegraphics[scale=.3]{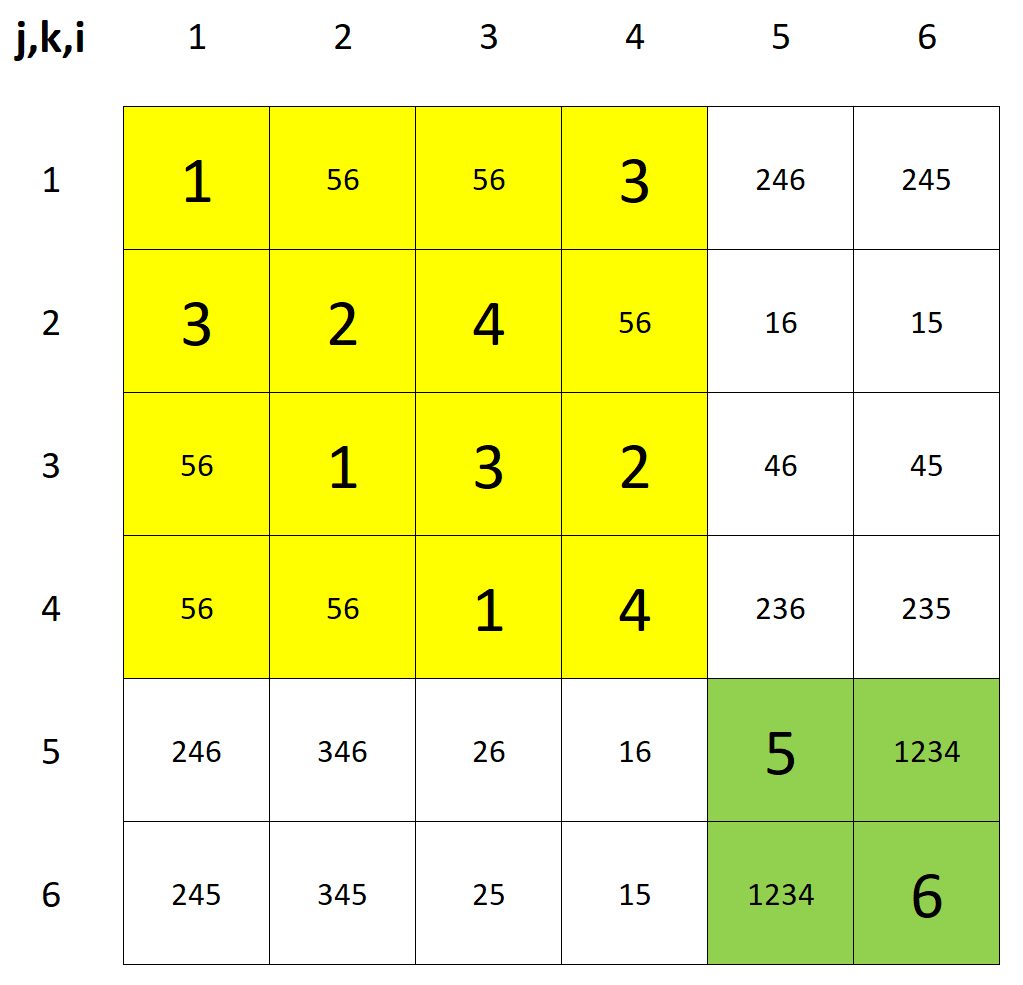}}
\end{tabular}
\caption{}\label{fig3_2}
\end{figure}

There are no dots in the yellow brick, so every layer of the brick is perfect. If $T$ is a layer of the yellow brick and contains 4+4-6 = 2 rooks, then the candidates can be eliminated in the remote mate of $T$ in the given layer. Layer 1 is shown in the middle of Figure~\ref{fig3_3}. The yellow brick in layer 1 has two rooks and its Ryser-number is 2, so candidates 1 and the corresponding dots in the part of the layer below the green brick can be eliminated.

\begin{figure}[htb]
\centering
\begin{tabular}{c}
\hbox to .95\textwidth {\includegraphics[scale=.3]{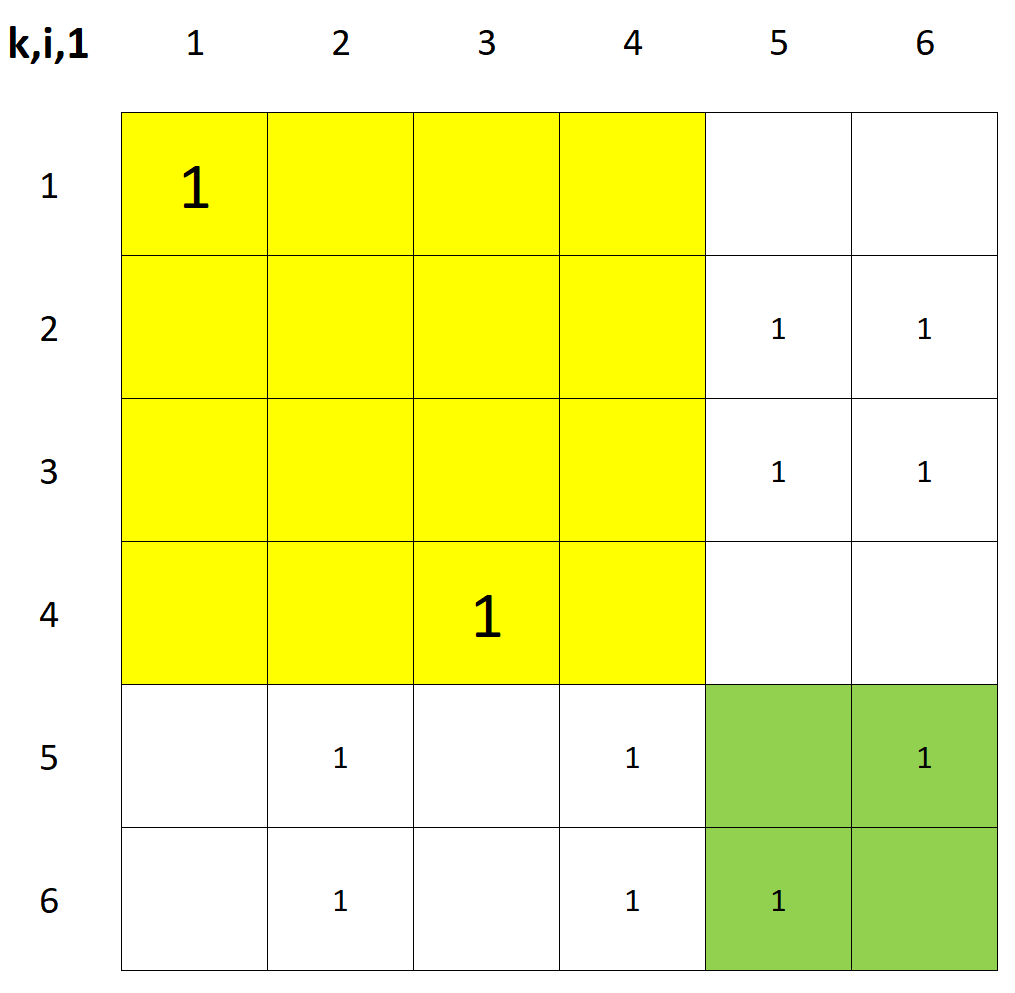}\hfill\includegraphics[scale=.3]{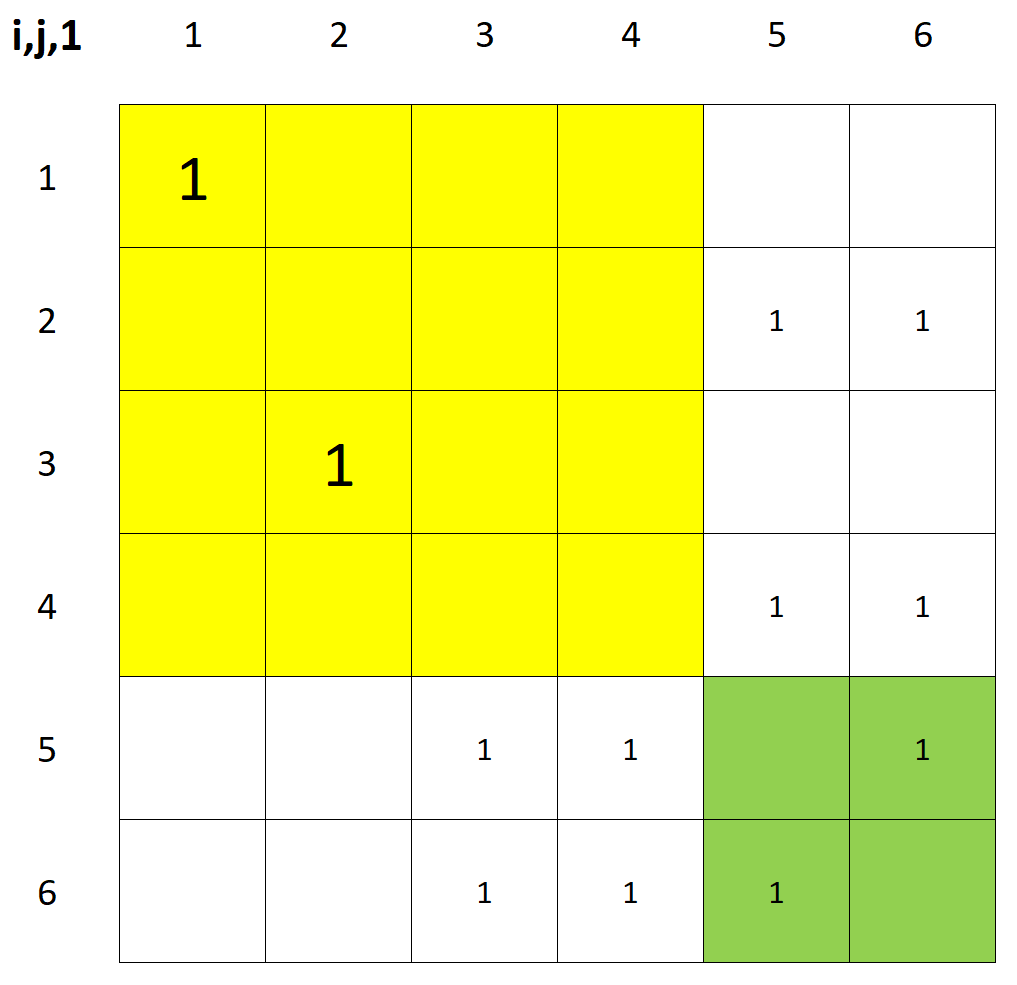}
\hfill\includegraphics[scale=.3]{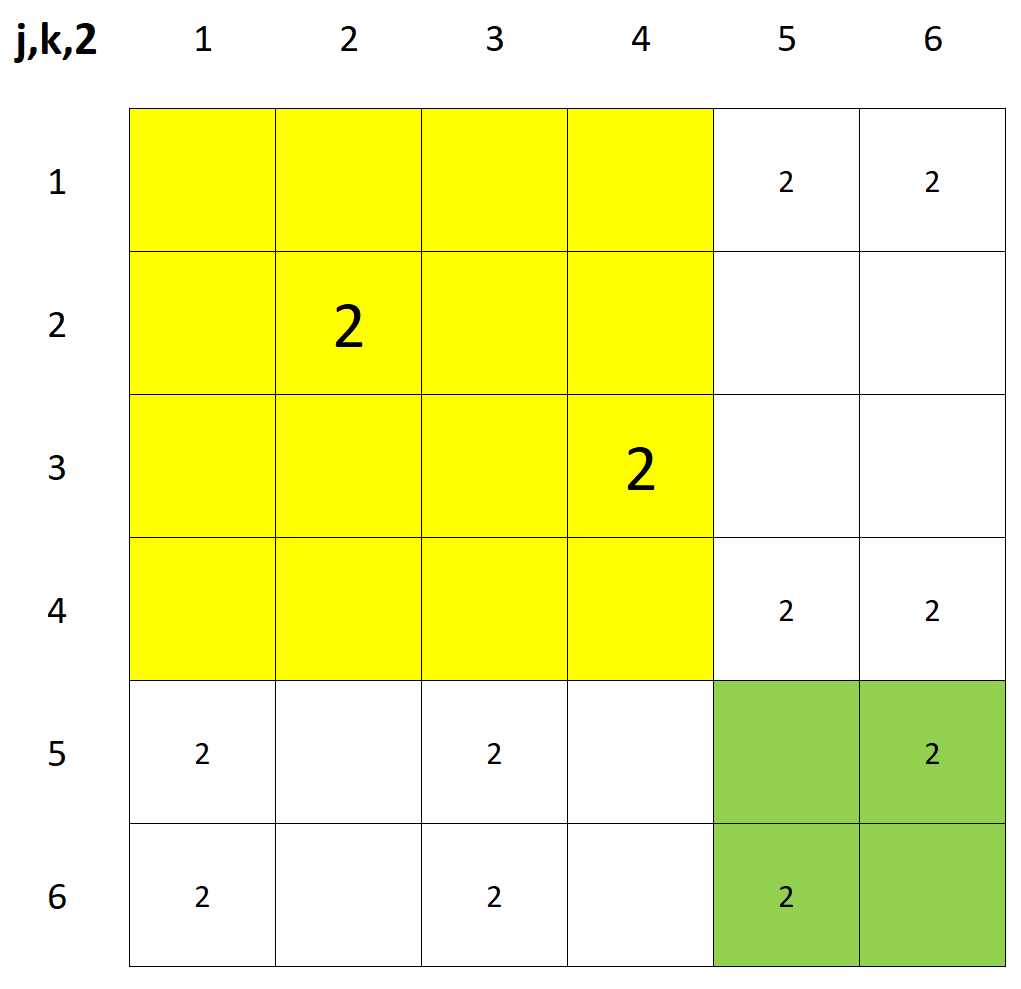}}
\end{tabular}
\caption{}\label{fig3_3}
\end{figure}

The column layer 1 shown in the middle of Figure~\ref{fig3_2} also contains two rooks. The layer can be seen on the left-hand side of Figure~\ref{fig3_3}. The two candidates that can be eliminated are the dots with coordinates $(5,6,1)$ and $(6,5,1)$. These dots are the candidates with coordinates $(6,1,5)$ and $(5,1,6)$ shown in the middle of Figure~\ref{fig3_2}. The row layer 2, shown in the middle of Figure~\ref{fig3_2} also contains two rooks. The layer can be seen on the right-hand side of Figure~\ref{fig3_3}. The two candidates that can be eliminated are the dots with coordinates $(5,6,2)$ and $(6,5,2)$. These dots are the candidates with coordinates $(2,5,6)$ and $(2,6,5)$ shown in the middle of Figure~\ref{fig3_2}. If we perform all the eliminations allowed by the perfect yellow layers with two rooks, we obtain the following squares in the Figure~\ref{fig3_4}.

\begin{figure}[htb]
\centering
\begin{tabular}{c}
\hbox to .95\textwidth {\includegraphics[scale=.3]{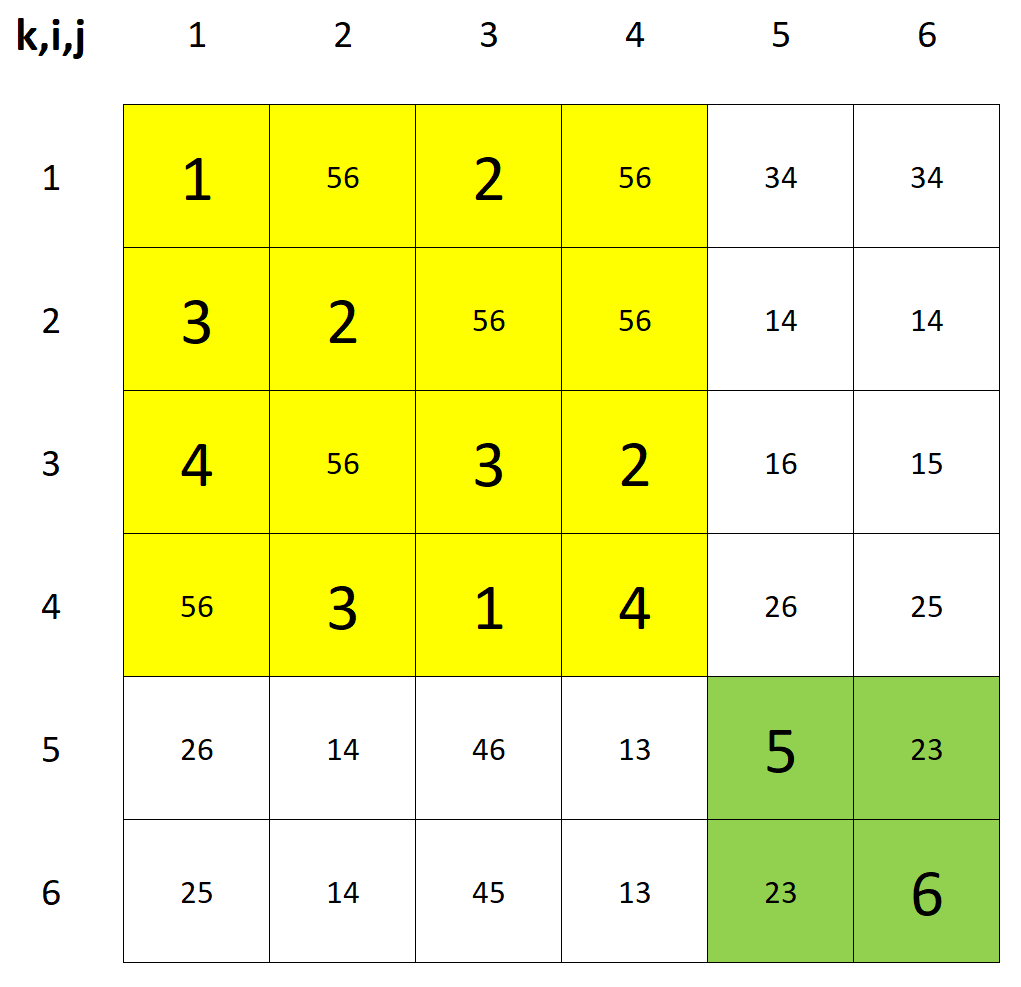}\hfill\includegraphics[scale=.3]{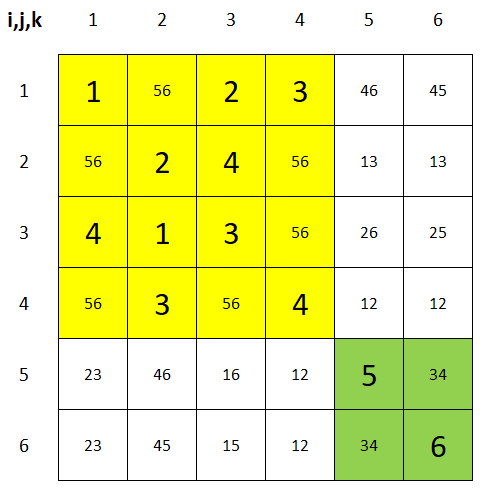}
\hfill\includegraphics[scale=.3]{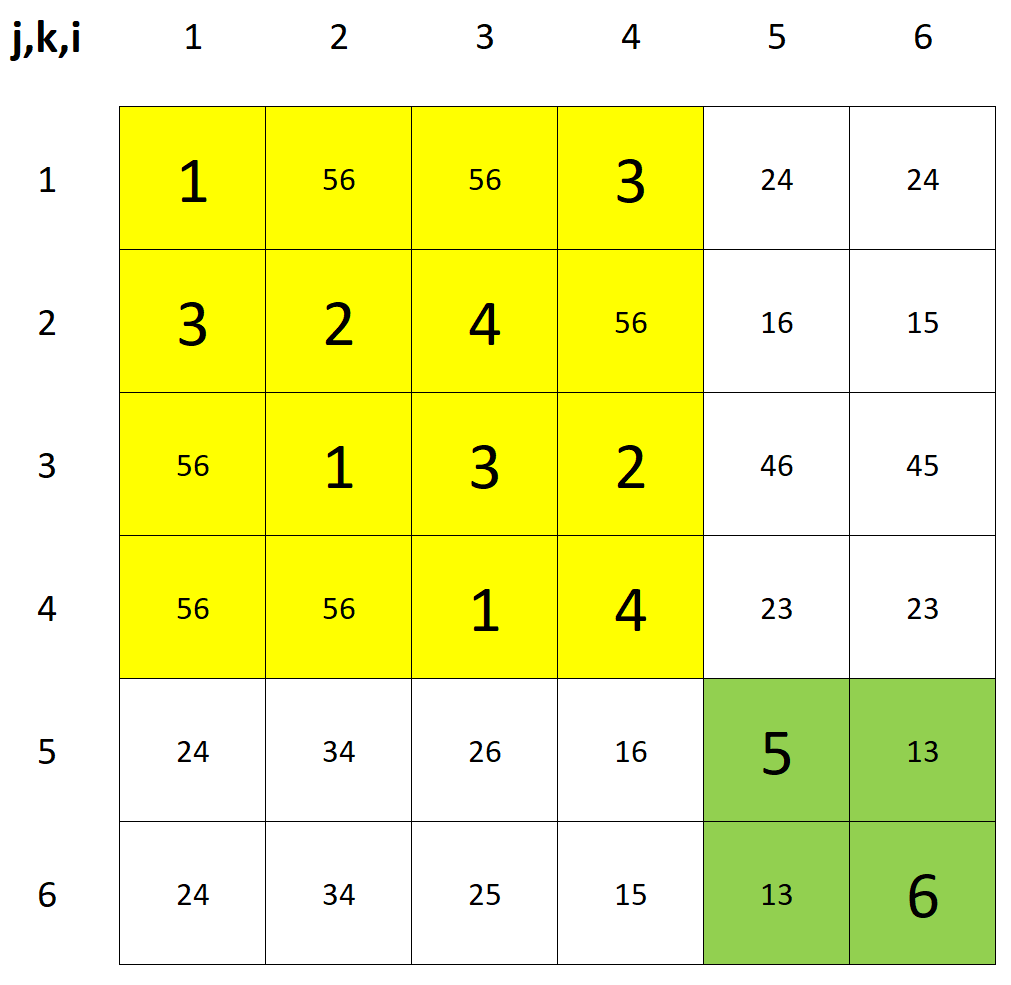}}
\end{tabular}
\caption{}\label{fig3_4}
\end{figure}

\begin{defi} 
A structure of dots in a PLSC is called \emph{minimum candidate structure}, or mCS  for short if each file of the PLSC that contains no rooks, contains exactly two dots.
\end{defi}
\begin{rmrk} 
The number of dots in an mCS is even, twice the number of empty cells. 
Consequently, if the dot structure of a PLSC is an mCS, then each conjugate of the proper PLS has the same number of empty cells.
\end{rmrk}
\begin{rmrk}
The Latin square \emph{GW6} has an mCS, since all empty cells of the three primary conjugates contain exactly two candidates as depicted in the Figure~\ref{fig3_5}.
\end{rmrk}

\begin{figure}[htb]
\centering
\begin{tabular}{c}
\hbox to .95\textwidth {\includegraphics[scale=.33]{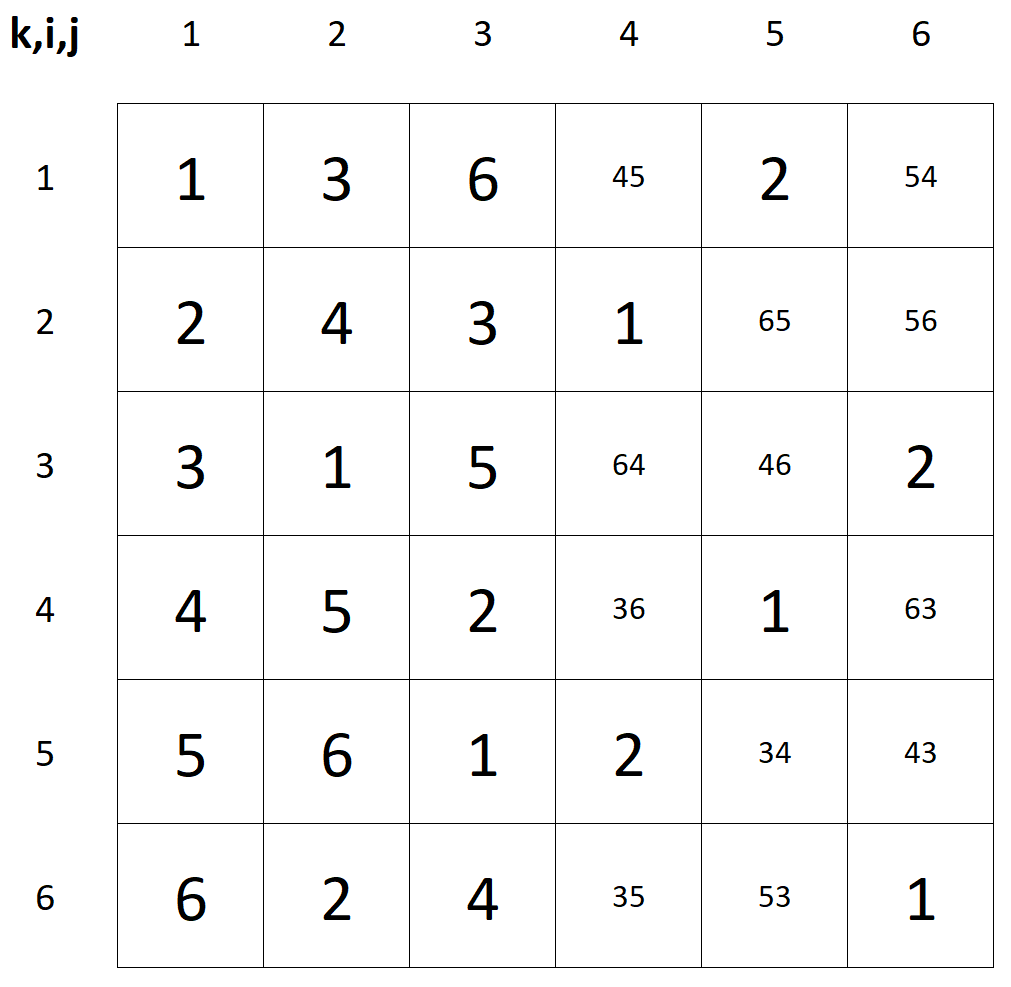}\hfill\includegraphics[scale=.33]{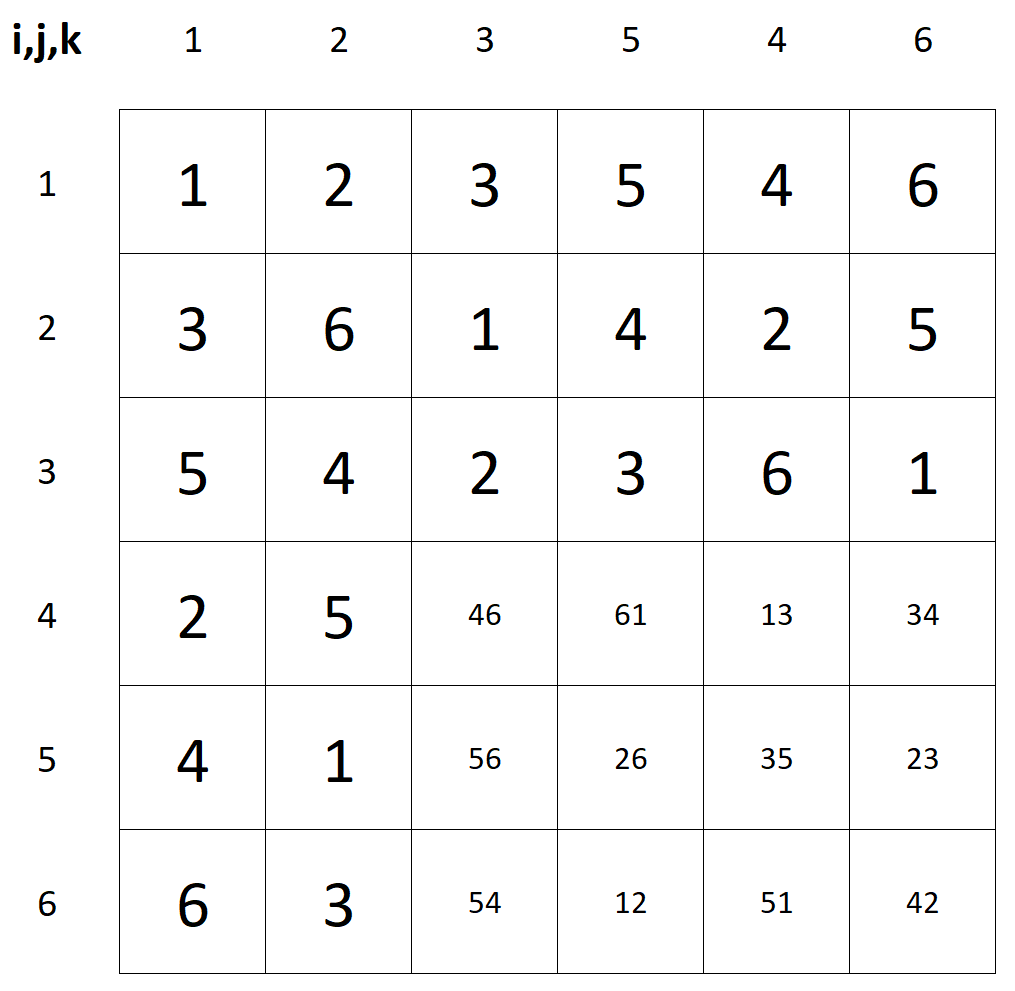}
\hfill\includegraphics[scale=.33]{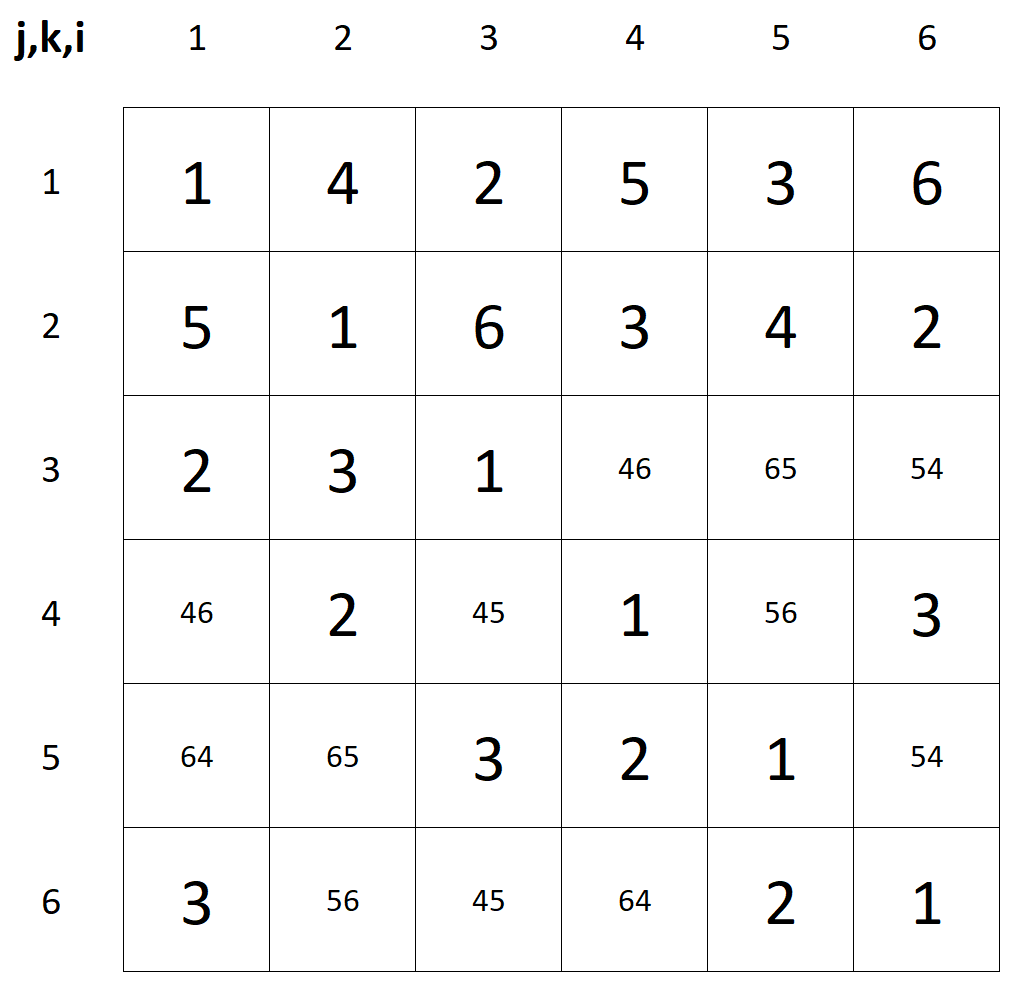}}
\end{tabular}
\caption{}\label{fig3_5}
\end{figure}

Let $P$ be a PLSC that has an mCS. Let the dots of $P$ be the vertices of a graph and there is an edge between two vertices if and only if there is file containing these two vertices. The resulting graph is $3$-regular.
\begin{defi} 
The component of the graph representation of an mCS and the structure of dots corresponding to this component is called \emph{Bivalue Universal Grave}, for short BUG (the name comes from Sudoku).
\end{defi}
\begin{rmrk}
The $3$-dimensional image of the mCS structure of the \emph{GW6} is illustrated on the right-hand side of  Figure~\ref{fig3_6}. The colored version on the left-hand side helps to identify the row layers, layer 6 is green, layer 5 is red and layer 4 is blue, but this coloring has nothing to do with the vertex coloring of the graph. In the graph representation of the mCS structure of the \emph{GW6} the dots of each row are connected, and the row layers are connected to each other (black edges), so the whole graph is a connected. The corresponding graph of the mCS structure in the secondary extension of the Cruse’s square also is connected, therefore, both are BUGs.
\end{rmrk}

\begin{figure}[!htb]
\centering
\includegraphics [scale=0.4] {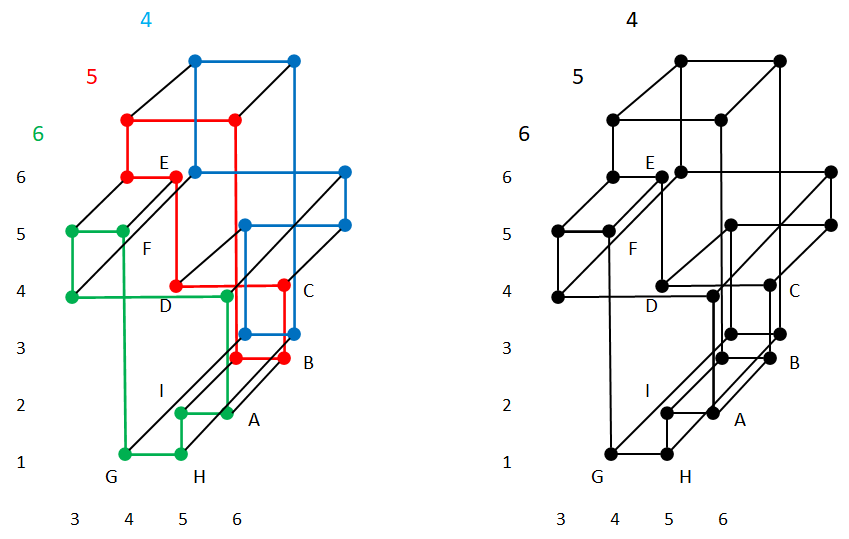}
\caption{}\label{fig3_6}
\end{figure}
\begin{defi}
A BUG has a \emph{solution} or is \emph{solvable} if half of the dots of the BUG can be replaced with rooks so that there is exactly one rook in each file of the BUG.
\end{defi}
An immediate necessary condition for completion of a PLSC follows.
\begin{theo} [BUG condition]
A necessary condition for the completion of a PLSC is that all BUGs in the secondary extension of the PLSC must be solvable.
\end{theo}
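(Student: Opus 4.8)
The plan is to reduce the statement to the Corollary immediately preceding it and then read off a solution of every BUG straight from a given completion. By that Corollary, an LSC $Q$ is a completion of $P$ if and only if it is a completion of the secondary extension $P^{**}$ (if the secondary extension procedure stopped in Step 2 or Step 5, then $P$ is already known to be non-completable and there is nothing to prove). So it suffices to show: if $P^{**}$ admits a completion, then every BUG of $P^{**}$ is solvable. I would therefore fix a completion $Q$ of $P^{**}$ and an arbitrary BUG $B$ of $P^{**}$, and produce a solution of $B$ from $Q$.

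Next I would record the two structural facts that make a BUG ``independent of its environment''. First, the defining property of a BUG: every file of $P^{**}$ that contains a dot of $B$ contains no rook and exactly two dots, and both of those dots belong to $B$; moreover each of the three files through any dot of $B$ is such a file (this is just that $B$ is a connected component of the $3$-regular graph on the bivalue dots, together with the fact that after automatic elimination a file carrying a rook carries no dot). Second, the elimination fact: a cell of $P^{**}$ that is neither a rook nor a surviving dot cannot carry a rook in any completion of $P^{**}$ — for cells killed by automatic elimination this is the very definition of elimination, and for cells killed as deceptive dots during the secondary extension it is the Remark that replacing a deceptive dot by a rook produces a non-completable PLSC.

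Then comes the core argument. Set $R := \{\, d \in B : Q \text{ has a rook in the cell } d\,\}$; I claim $R$ is a solution of $B$. For any file $f$ meeting $B$, the LSC $Q$ has exactly one rook on $f$; by the elimination fact that rook lies in a cell of $f$ that is, inside $P^{**}$, neither a rook (there are none on $f$) nor an eliminated cell, hence it lies in one of the two dots of $f$, both of which are in $B$. Thus $|R \cap f| = 1$ for every file $f$ meeting $B$, so the rooks of $R$ are pairwise non-attacking. A double count finishes it: there are exactly $3|B|/2$ files meeting $B$ (each dot of $B$ lies on three such files, each such file carries two dots of $B$), whence $3|B|/2 = \sum_{f} |R\cap f| = 3|R|$ and $|R| = |B|/2$. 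So exactly half of the dots of $B$ have been turned into rooks, with exactly one rook in each file of $B$; that is precisely a solution of $B$, so $B$ is solvable.

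I expect the only genuine obstacle to be the bookkeeping in the second paragraph: making precise what a BUG is inside a secondary extension that is not itself an mCS, and checking carefully that eliminated cells (both the automatically eliminated ones and the deceptive dots removed during secondary extension) really are forbidden to every completion of $P^{**}$. Once those points are pinned down, the double-counting step is immediate and the argument is robust; no estimates or case analysis are needed beyond it.
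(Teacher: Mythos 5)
Your argument is correct and is essentially the one the paper intends: the paper offers no explicit proof, introducing the theorem as an ``immediate'' consequence of the preceding Corollary (a completion of $P$ is exactly a completion of $P^{**}$) together with the fact that eliminated cells cannot receive rooks in any completion, so restricting a completion to a BUG yields a solution of that BUG. You have simply made that immediate argument explicit, including the correct justification for both kinds of eliminated cells and the count $|R|=|B|/2$, so nothing further is needed.
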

From the graph representation it is evident that a BUG is solvable if and only if the vertices of the corresponding graph can be colored with two colors, which is equivalent to the graph being bipartite, which is true exactly if the graph does not contain a cycle with an odd number of edges. There is cycle of length 9 with vertices $A-B-C-D-E-F-G-H-I$ in the Figure~\ref{fig3_6}. Therefore, the \emph{GW6} is not completable.

We have the similar problem in the secondary extension of the Cruse’s square on the left-hand side of Figure~\ref{fig3_7}. If you want to color the vertices of the cycle
\[(1,2,5),(1,6,5),(1,6,4),(5,6,4),(5,2,4),(5,2,6),(1,2,6)
\]
alternately red and blue, you will get stuck at candidate 6 of cell (1,2), because it already has a red and a blue neighbor, so the given cycle is of odd length, namely 7. 
There could exist more cycles is of odd length, another example 
\[
(1,2,5),(6,2,5),(6,2,4),(6,5,4),(1,5,4),(1,5,6),(1,2,6)
\]
is shown on the right-hand side of Figure~\ref{fig3_7}.
Thus, Cruse’s square has no completion either. 
\begin{figure}[!htb]
\centering
\includegraphics [scale=0.6] {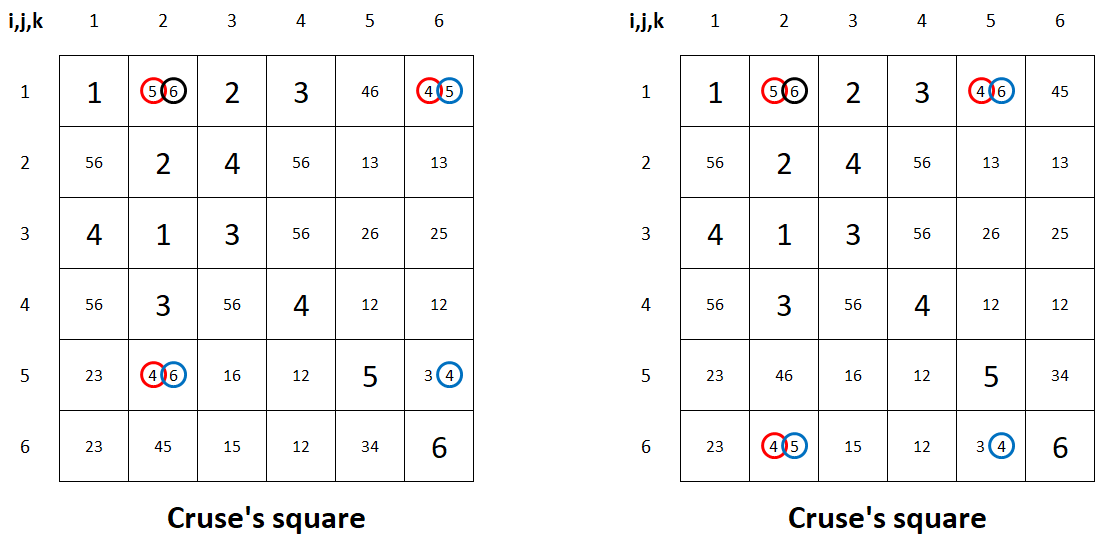}
\caption{}\label{fig3_7}
\end{figure}

\begin{rmrk}
It can be seen that the length of the smallest odd cycle of a BUG is 7.
\end{rmrk}

We now summarize our observations on BUGs. If, starting from any cell of a BUG, considering one of the two candidates as a symbol, we can solve all the cells of the BUG without inconsistency, then

\goodbreak

\begin{itemize}
\item [--]
starting from the same cell, considering the other candidate as a symbol, we can solve the BUG and each cell of the BUG has the other symbol as during the first solution,
\item [--]
starting from the given cell, the BUG has exactly two disjoint solutions,
\item [--]
connected bipartite graphs are uniquely partitioned into two color classes, so starting from any cell in the BUG we get one of the previous two solutions.
\end{itemize}
\begin{theo} [BUG theorem] 
A BUG has exactly 0 or 2 solutions. 
\end{theo}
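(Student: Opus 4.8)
The plan is to reduce everything to the bipartiteness criterion already recorded just above the statement: a BUG is solvable if and only if the associated $3$-regular graph $G$ is bipartite, equivalently $2$-colourable. If $G$ is not bipartite there are no solutions, which is the ``$0$'' alternative, so it remains to show that a bipartite BUG has exactly two solutions.

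First I would pin down the dictionary between solutions and proper $2$-colourings. Since the dot structure is an mCS and no file through a BUG vertex contains a rook (after automatic elimination a file with a rook has no dots), every such file contains exactly two dots; moreover two distinct cells of the cube agree in at most two coordinates, hence lie on at most one common file, so the files meeting the BUG are in bijection with the edges of $G$, and both endpoints of an edge lie in the same component, i.e.\ in the BUG itself. Consequently ``exactly one rook in each file of the BUG'' says precisely that the set $R$ of cells turned into rooks meets every edge of $G$ in exactly one endpoint; that is, $R$ and its complement are both independent and partition $V(G)$, so $(R,\,V(G)\setminus R)$ is a proper $2$-colouring of $G$. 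Conversely, writing the (by connectedness, unique up to swap) bipartition as $V(G)=V_1\cup V_2$, taking $R=V_1$ or $R=V_2$ places exactly one rook on every edge.

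Next I would verify that each colour class is a legitimate solution in the strict sense of the definition, i.e.\ that it is ``half'' of the dots. By the handshake lemma applied to the $3$-regular graph $G$, the BUG has $3\lvert V(G)\rvert/2$ edges, hence that many files; each rook cell lies on $3$ of these files and each file carries exactly one rook, so $3\lvert R\rvert = 3\lvert V(G)\rvert/2$, whence $\lvert R\rvert = \lvert V(G)\rvert/2$ for every solution $R$, and in particular $\lvert V_1\rvert = \lvert V_2\rvert = \lvert V(G)\rvert/2$. Thus both $V_1$ and $V_2$ are solutions; they are distinct and disjoint (there is at least one edge); and by the correspondence above together with the uniqueness of the $2$-colouring of a connected bipartite graph, they are the only solutions. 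Hence a bipartite BUG has exactly two solutions, which completes the dichotomy.

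I do not expect a genuine obstacle: the statement is essentially a repackaging of the bipartiteness equivalence plus the uniqueness of the bipartition of a connected graph. The only points that need care are the two bookkeeping facts above — that ``file of the BUG'' and ``edge of $G$'' coincide (this uses the mCS hypothesis and the geometry of the cube), and that the two colour classes automatically have equal size (this uses $3$-regularity) — and once these are in place the argument is immediate.
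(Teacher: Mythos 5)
Your proof is correct and follows essentially the same route as the paper, which identifies solutions of a BUG with the two colour classes of the (unique) bipartition of the connected $3$-regular graph and concludes that there are $0$ solutions when the graph is non-bipartite and exactly $2$ otherwise. You merely make explicit two bookkeeping points the paper leaves implicit — the bijection between files of the BUG and edges of the graph, and the equal size of the two colour classes via $3$-regularity — so no substantive difference in approach.
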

\begin{rmrk} 
The statement is obviously also true for Sudoku squares. Sudoku puzzles have precisely one completion, so they cannot contain BUGs. This is the reason why the Sudoku technique BUG+1 came about.
\end{rmrk}
As we mentioned before Cruse~\cite{[3]} proved that the capacity condition holds for the Cruse’s square. To be precise he proved, that the characteristic matrix $M$ of the Cruse’s square can be extended to a triply stochastic matrix $M^*$ by replacing some $0$'s with $1/2$s. The symbol layers of the triply stochastic matrix are depicted by Cruse in his proof. Like all triply stochastic matrices, $M^*$ satisfies the capacity condition and contains all the 1's of $M$ which are corresponding to the rooks of the original square. So, each RBC of the Cruse’s square satisfies the capacity condition, even if you add some $1/2$s to the number of rooks. Easy to check, that the $1/2$s in the Cruse’s proof are in the cells that
contain dots in the proper BUG depicted in the middle square of Figure~\ref{fig3_4}. Thus, each file of the matrix contains either one 1 or two $1/2$s. The other cells of the files
contain 0. Hence, it is clear from the construction, that the proper matrix is triply stochastic. We don't know how Cruse identified the cells that contain $1/2$s in his proof, maybe by ignoring the deceptive candidates that detroy immediately the completability, but his reasoning can be extended in the following way.
\begin{defi}
A component of the dot structure of a PLSC is called \emph{k-uniformly-distributed}  if each file of the component contains exactly $k$ dots for some $k > 1$, or simply \emph{uniformly-distributed} if $k$ has no role.
\end{defi}
\begin{defi}
A PLSC is \emph{uniformly-distributed} if all components of the dot structure are uniformly-distributed, but the values of $k$ may vary from component to component.
\end{defi}
\begin{statm}\label{statm4.19}
Let $P$ be a PLSC. If you can eliminate some dots such that the remaining PLSC is uniformly-distributed and has no eliminated files, then $P$ satisfies the capacity condition.
\end{statm}
\begin{proof}
If you replace each rook by 1 and each dot of a $k$-uniformly-distributed component by $1/k$, then you get a triple stochastic matrix.
\end{proof}
\begin{rmrk}
The PLSC derived from secondary extension of the Cruse’s square and the PLSC derive from \emph{GW6} each contain one 2-uniformly-distributed mCSs.
\end{rmrk}
\begin{cor}\label{cor4.21}
The secondary extension of the Cruse’s square, shown in the middle of Figure~\ref{fig3_4} and the PLS GW6, shown in the Figure~\ref{fig2_2}, both satisfy the capacity condition (and still have no completions).
\end{cor}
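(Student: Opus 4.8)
The plan is to reduce both assertions to facts already in hand. For the capacity condition I would invoke Statement~\ref{statm4.19}; for the parenthetical non-completability I would invoke the BUG condition together with the odd cycles exhibited above.

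First I would record, by direct inspection of the middle square of Figure~\ref{fig3_4} and of Figure~\ref{fig3_5}, that in each of the two PLSCs every file containing no rook contains exactly two dots. In other words, the dot structure of the secondary extension of the Cruse's square and the dot structure of \emph{GW6} are each a single $2$-uniformly-distributed mCS; in particular each of these PLSCs is itself uniformly-distributed in the sense of the definition, and neither has an eliminated file (a rook-free file carries two dots, not zero). Applying Statement~\ref{statm4.19} with the empty elimination --- the ``remaining PLSC'' being the PLSC itself --- the hypothesis is met and the capacity condition follows. Explicitly, placing $1$ in every cell that holds a rook and $1/2$ in every cell that holds a dot produces a matrix in which every file sums to $1$ (one entry $1$, or two entries $1/2$), hence a triply stochastic matrix extending the characteristic matrix; every brick of a triply stochastic matrix obeys the capacity bound, so in particular the characteristic matrix does. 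This recovers as a special case Cruse's observation that his square admits a $\{0,1/2,1\}$-valued triply stochastic extension, with the $1/2$'s sitting precisely on the dots of the BUG of Figure~\ref{fig3_4}.

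For ``still have no completions'' I would simply quote the BUG condition: a PLSC is completable only if every BUG in its secondary extension is solvable, equivalently only if the associated $3$-regular graph is bipartite, equivalently only if it contains no odd cycle. We already displayed such a cycle --- of length $9$ for \emph{GW6} and of length $7$ for the secondary extension of the Cruse's square --- so in each case the BUG is not solvable and the PLSC is not completable; for the Cruse's square itself this transfers because a PLSC and its secondary extension have exactly the same completions.

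I do not expect a substantive obstacle: the corollary is an assembly of earlier results. The one place to be careful is the first paragraph's bookkeeping --- verifying on the figures that every rook-free file really does carry exactly two dots (so that ``$2$-uniformly-distributed'' holds and there are no eliminated files), and noting that \emph{GW6} as drawn already coincides with its own primary (hence secondary) extension, so that Statement~\ref{statm4.19} may be applied to it in the displayed form.
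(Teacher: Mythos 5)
Your proposal is correct and follows essentially the same route as the paper: the paper derives the corollary from the Remark that both PLSCs contain a single $2$-uniformly-distributed mCS, applies Statement~\ref{statm4.19} (via the $1/2$-valued triply stochastic extension) for the capacity condition, and relies on the previously exhibited odd cycles of length $9$ and $7$ for non-completability. Your explicit verification that the rook-free files carry exactly two dots and your remark on recovering Cruse's $\{0,1/2,1\}$ construction are just the details the paper leaves implicit.
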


\FloatBarrier

\bibliographystyle{plain}

\end{document}